\newcommand{\normW}[1]{{\vert\kern-0.25ex\vert\kern-0.25ex\vert #1 
		\vert\kern-0.25ex\vert\kern-0.25ex\vert}}
\newcommand{\normWbig}[1]{{\Big\vert\kern-0.25ex\Big\vert\kern-0.25ex\Big\vert #1 
		\Big\vert\kern-0.25ex\Big\vert\kern-0.25ex\Big\vert}}
\newcommand{\C}{\mathbb{C}}
\newcommand{\R}{\mathbb{R}}
\newcommand{\Z}{\mathbb{Z}}
\newcommand{\boC}{\mathcal{C}}
\newcommand{\boE}{\mathcal{E}}
\newcommand{\boX}{\mathcal{X}}
\newcommand{\NE}{\mathcal{NE}(\mathbb{R})}
\newcommand{\Emin}{E_\textup{min}}
\newcommand{\bu}{\mathbf{u}}
\renewcommand{\d}{\,\mathrm{d}}
\DeclareMathOperator{\supp}{{\rm supp}}
\newcommand{\loc}{\operatorname{loc}}
\theoremstyle{plain}
\newtheorem{theorem}{Theorem}[section]
\newtheorem{proposition}[theorem]{Proposition}
\newtheorem{lemma}[theorem]{Lemma}
\theoremstyle{definition}
\newtheorem{definition}[theorem]{Definition}
\newtheorem{remark}[theorem]{Remark}
\theoremstyle{remark}
\numberwithin{equation}{section}
\begin{document}
	\title{Momentum--mass normalized dark--bright solitons to one dimensional Gross--Pitaevskii systems}                                 
	\author{
		\renewcommand{\thefootnote}{\arabic{footnote}}
		Salvador  L\'opez--Mart\'inez\footnotemark[1]}
	\footnotetext[1]{
		Departamento de Matem\'aticas, Universidad Aut\'onoma de Madrid, Ciudad Universitaria de Cantoblanco, 28049, Madrid, Spain.\\
		E-mail: {\tt salvador.lopez@uam.es}}
	\date{}
	\maketitle

	\begin{abstract}
    	We rigorously establish the existence of dark--bright solitons as traveling wave solutions to a one dimensional defocusing Gross--Pitaevskii system, a widely used model for describing mixtures of Bose--Einstein condensates and nonlinear optical systems. These solitons are shown to exhibit symmetry and radial monotonicity in modulus, and to propagate at subsonic speed. Our method relies on minimizing an energy functional subject to two constraints: the mass of the bright component and a modified momentum of the dark component. The compactness of minimizing sequences is obtained via a concentration--compactness argument, which requires some novel estimates based on symmetric decreasing rearrangements.
	\end{abstract}   
	
	\maketitle
	
	\medskip
	\noindent{{\em Keywords:}
		Defocusing Schr\"odinger system, Gross--Pitaevskii system, dark--bright solitons, non standard conditions at infinity, concentration--compactness, symmetric decreasing rearrangements.
		
		\medskip
		\noindent{\emph{MSC 2020}:}
		35A15; 
		35C07; 
		35C08; 
        35J50; 
        35Q40; 
		35Q55; 
		35Q60. 
	
		\section{Introduction}\label{intro}

The aim of this paper is to analyze the existence of solitonic solutions to the following  system:
\begin{equation}\label{GPS}
\begin{cases}
i\partial_t\Psi = \partial_{xx}\Psi + (1-|\Psi|^2 - \alpha |\Phi|^2)\Psi, &(x,t)\in\mathbb{R}^2,
\\
i\partial_t\Phi = \partial_{xx} \Phi - (\alpha|\Psi|^2 + \beta |\Phi|^2)\Phi, &(x,t)\in\mathbb{R}^2,
\end{cases}
\end{equation}
where $\Psi$ and $\Phi$ are complex--valued functions, and $\alpha, \beta$ are real parameters. This system consists of two coupled nonlinear Schr\"odinger equations in one spatial dimension and is commonly referred to as the Gross--Pitaevskii system. This is a classical model for describing the dynamics of quasi--one dimensional Bose--Einstein condensates (BECs) consisting of a mixture of two different species \cite{kevrekidis_frantzeskakis_carretero-gonzalez_2008,berloff2008}, as well as for the multi--mode propagation of pulses in nonlinear optical fibers \cite{kivshar1998dark,KivsharAgrawal}. The sign of $\alpha$ determines the nature of the mutual interactions between species: for $\alpha > 0$, the interaction is repulsive (defocusing), while for $\alpha < 0$, it is attractive (focusing). Similarly, $\beta > 0$ corresponds to self--defocusing interactions in the second component, and $\beta<0$ to self--focusing ones.  The case $\beta=0$ is special, as the second equation becomes linear in $\Phi$. In this regime, the system is sometimes referred to as the Gross--Clark system, which models the motion of an uncharged impurity (represented by the wave function $\Phi$) within a Bose--Einstein condensate \cite{Grant_Roberts}.

In both BECs and optics, coherent structures {known} as \emph{solitons} have been experimentally observed. These are localized waves that maintain their shape while traveling at a constant velocity. Based on their qualitative properties, solitons are formally classified into different types. For instance, localized density bumps that vanish at infinity are called \emph{bright solitons}, while localized density dips with nonzero background are referred to as \emph{dark solitons}. 

Solitonic pairs consisting of combinations of dark and bright components (and other structures such as antidark solitons or domain walls) have also been observed in mixtures of BECs and nonlinear optical systems, though their specific configurations depend on the nature of the internal and mutual interactions \cite{KevNisFraMalCar}. For instance, it is well known that bright solitons do not exist in the scalar $(\alpha=0)$ defocusing $(\beta>0)$ case. However, if both internal and mutual interactions are repulsive, i.e. $\alpha>0$ and $\beta>0$, then the  existence of dark--bright soliton pairs is physically justified by a symbiotic relationship between the components, that is, the dark component effectively \emph{supports} the bright one. 

In this work, we focus on the mathematical analysis of dark--bright soliton pairs, i.e. solutions to \eqref{GPS} that satisfy the conditions at infinity:
\begin{equation}\label{condinfty}
|\Psi(x,\cdot)|\to 1,\quad |\Phi(x,\cdot)|\to 0,\quad\text{as }|x|\to\infty.
\end{equation}
Under these conditions, we can define the \emph{energy} functional, which is formally conserved along the flow:
\begin{equation}\label{eq:originalenergy}
\int_{\mathbb{R}}\Big( \frac{1}{2}|\partial_x\Psi|^2+\frac{1}{2}|\partial_x\Phi|^2+\frac{1}{4}(1-|\Psi|^2)^2+\frac{\beta}{4}|\Phi|^4 +\frac{\alpha}{2}|\Phi|^2|\Psi|^2 \Big)\d x,
\end{equation}
where time dependence is implicit. The energy functional will play a fundamental role in our analysis. In fact, we will work in the functional framework that keeps the energy finite. This topic will be developed in next section.

The motion of the solitons at constant velocity while maintaining shape is achieved mathematically by solutions to \eqref{GPS} of the form:
\[
\Psi(x,t) = u(x-ct),\quad \Phi(x,t) = e^{i((\lambda+c^2/4)t-cx/2)}v(x-ct),
\]
for some $c,\lambda\in\mathbb{R}$, where $u$ and $v$ are complex--valued functions which represent \emph{traveling waves} with velocity $c$. The equations for $u$ and $v$ are:
\begin{equation}\label{TWS}
\begin{cases}
icu' + u'' + (1-|u|^2-\alpha|v|^2)u=0, &x\in\mathbb{R},
\\
-v''=(\lambda -\alpha|u|^2-\beta|v|^2)v, &x\in\mathbb{R}.
\end{cases}
\end{equation}
We keep imposing the asymptotic conditions:
\begin{equation}\label{condinftyuv}
|u(x)|\to 1,\quad |v(x)|\to 0,\quad\text{as }|x|\to\infty.
\end{equation}
The system \eqref{TWS} with \eqref{condinftyuv} is invariant under translations in $x$ and the phase transformations $(u,v)\mapsto (e^{ik}u, e^{i\ell}v)$ for any $k,\ell\in\mathbb{R}$. Moreover, by complex conjugation, we may assume without loss of generality that $c \geq 0$. Also, since the second equation in \eqref{TWS} has
only real coefficients, we will restrict our attention to solutions whose second component $v$
is real--valued. In fact, it will be shown in the Appendix that the bright component to  system \eqref{TWS} is real--valued up to a multiplication by a complex constant of modulus one. Notice that this is no longer the case for other solitonic pairs such as dark--dark solitons, for which both components have nontrivial imaginary part, see \cite{Sheppard_Kivshar} for instance.

We now examine the particular case $\alpha=0$, in which the equations are decoupled. First of all, the second equation becomes
\begin{equation}\label{eq:v}
-v'' = (\lambda - \beta v^2)v,\quad x\in\mathbb{R}.
\end{equation}
The theory of bright solitons in the focusing case $\beta<0$ is a classical subject and the references are extensive, we only refer to \cite{cazenave}. In contrast,  the defocusing equation for $\beta\geq 0$ admits a unique solution in $H^1(\R)$, which is $v = 0$. 

On the other hand, the first equation
\begin{equation}\label{eq:u}
icu' + u'' + (1-|u|^2)u=0,\quad x\in\mathbb{R},
\end{equation}
corresponds to the traveling wave profile equation for the defocusing Gross--Pitaevskii equation. It is well--known that, for every subsonic speed, i.e. for every $c\in [0,\sqrt{2})$, there exists a unique (up to translations and phase shift) finite--energy dark soliton solution to \eqref{eq:u}, which is explicit, and its stability properties have been extensively studied \cite{linbubbles,DiMenzaGallo,orbitalblack,asymptoticgray,asymptstabblack}. We provide more details on this subject in next section. For a general overview, see the survey \cite{bethuel0}.

Of course, the pair $(u,0)$, where $u$ is a nontrivial solution to \eqref{eq:u}, trivially satisfies \eqref{TWS} even for $\alpha\not=0$. We refer to such solutions as \emph{semi-trivial}, while \emph{trivial solutions} correspond to $(e^{ik},0)$ for some constant $k\in\mathbb{R}$. 

Significant efforts, coming mainly from the physics community, have been devoted to obtaining dark--bright solitons as solutions to the system \eqref{GPS} in the coupled case $\alpha\not=0$. However, much of the existing work relies on numerical methods or formal expansions \cite{kivshar1998dark, kevrekidis_frantzeskakis_carretero-gonzalez_2008}. Despite its obvious relevance for theoretical and experimental physics, a rigorous derivation of fully nontrivial dark--bright solitons as solutions to the Gross--Pitaevskii system is missing in the literature. To the best of our knowledge, only partial results are available. For instance, the case $\beta=0$ has been dealt with in \cite{Maris2006,alhelou}. On the other hand, it is well known that the system \eqref{TWS} with the choice $\alpha = \beta = 1$, referred to as Manakov system \cite{Manakov}, is integrable, and explicit fully nontrivial solutions exist \cite{Busch_Anglin,Sheppard_Kivshar}. However, the physically relevant range of parameters extends beyond the integrable case. A broader, though still limited, range for $\alpha$ and $\beta$ is considered in the recent work \cite{Mao_Zhao}, where semi--explicit dark--bright solitons are constructed via a Lagrangian variational method.

The main contribution of this work is an analytical proof of the existence of dark--bright soliton solutions to \eqref{GPS} in the {so--called \emph{miscibility regime} i.e. for any value of the parameters $\alpha>0$ and $\beta>0$ satisfying $\alpha^2\leq\beta$. Moreover, we show that these solutions are symmetric and radially nonincreasing in modulus, and they propagate at subsonic speed. In the context of mixtures of BECs, the condition $\alpha^2\leq\beta$ indicates that the repulsive forces between the two species in the mixture of condensates are weaker than the repulsive interatomic interactions \cite{kevrekidis_frantzeskakis_carretero-gonzalez_2008,berloff2008}. Mathematically, in the miscibility regime, the renormalized energy functional \eqref{eq:energy} defined below is nonnegative, as proved in Theorem~\ref{thm:properties}. This property plays a crucial role in establishing the existence result. In contrast, in the \emph{immiscibility regime} corresponding to $\alpha^2 > \beta$, the renormalized energy is expected to attain negative values, and the problem therefore requires a different analytical approach. This case will be addressed in  future work.

Our approach relies on minimizing the mentioned} energy functional under two constraints, one for the momentum of the dark component, and another one for the mass of the bright component, in the spirit of \cite{alhelou}. Arguing this way, the constants $c$ and $\lambda$ arise as Lagrange multipliers, one corresponding to each constraint. 

The proof of the compactness of minimizing sequences is based on the well--known Concentration--Compactness principle of Lions \cite{CCPLionsI,CCPLionsII}. This approach requires excluding the potential issues of \emph{vanishing} and \emph{dichotomy}. Ruling out dichotomy for functionals of more than one variable is not a trivial task, and the literature in this matter is limited. To address this challenge, the use of symmetric decreasing rearrangements (see Section~\ref{sec:rearrangements}) has been proven effective. Indeed, a refined P\'olya--Szeg\H{o} inequality due to \cite{Garrisi} was the key for establishing compactness for {systems} of Klein--Gordon equations. This technique has also been successfully employed in various other contexts, always for solutions to systems whose both components are real--valued and vanish at infinity. These include nonlinear Schr\"odinger--Korteweg--de Vries systems \cite{Albert_Bhattarai}, focusing nonlinear Schr\"odinger systems \cite{Nguyen_Wang}, and higher--dimensional problems \cite{Shibata,GouJeanjean2016,GouJeanjean2018}. 

In the case of dark--bright solitons, the use of symmetric decreasing rearrangements presents a particular challenge, as the dark component does not vanish at infinity. To address this, nontrivial manipulations of the energy and the momentum of the dark component in terms of its lifting are required. The detailed formulation and the statements of the main results are provided in Section~\ref{sec:statements}. 

The rest of the paper is organized as follows. First, Section~\ref{sec:rearrangements} is devoted to the statements of well--known preliminary results on symmetric decreasing rearrangements. After that, Section~\ref{sec:properties} includes the proof of Theorem~\ref{thm:properties}. Next, Section~\ref{sec:compactness} contains the proof of the compactness of minimizing sequences. Finally, we prove Theorem~\ref{thm:mainresult} in Section~\ref{sec:proof}.

\medskip

\noindent{\bf Acknowledgments:} The author would like to thank the members of the Instituto Superior T\'ecnico (Universidade de Lisboa), particularly Sim\~ao Correia and Hugo Tavares, for their support and hospitality during the research stay in which this work was carried out.

\medskip
{
\noindent{\bf Funding:} This work has been supported by the Grant PID2024--156079NA--I00 of the MCIU/AEI (Spain).
}

\medskip

\noindent{\bf Notation:} Unless a complex range is indicated explicitly, the Lebesgue and Sobolev  spaces appearing in this work are of real--valued functions. If $w\in L^r(\R)$ for some $r\in [1,\infty]$, we denote $\|w\|_r=\|w\|_{L^r(\R)}$. Likewise, if $w\in H^1(\R)$, we write $\|w\|_{1,2}=\|w\|_{H^1(\R)}$.

\section{Functional setting and main results}\label{sec:statements}

Let us define the energy space
\[\boE(\R)=\{u\in H^1_{\loc}(\R;\C):\, u'\in L^2(\R;\C),\, 1-|u|^2\in L^2(\R;\C)\},\]
and the nonvanishing energy space
\[\NE=\{u\in\boE(\R):\, |u|>0\}.\]
The fundamental properties of these spaces are derived in \cite{gerardenergyspace}. For instance, we stress that $\boE(\R)\subset L^\infty(\R)$, and also that $\boE(\R)$ contains oscillating functions at infinity, which in particular implies that the energy space is not a vector space, not even an affine space. On the other hand, every function in the nonvanishing energy space admits a lifting. More precisely, for every $u\in\NE$, there exists $\theta\in H^1_{\loc}(\R)$ such that $\theta'\in L^2(\R)$ and $u=\rho e^{i\theta}$ in $\R$, where $\rho=|u|$. We will use this property and this notation repeatedly throughout the paper.

For $u=\rho e^{i\theta}\in\NE$, let us introduce the functional
\begin{equation}\label{eq:momentum}
p(u)=\frac12\int_\R G(|1-\rho|)\theta'\d x,\quad\text{where }\quad G(s)=s(2-s),\, s\geq 0.
\end{equation}
Notice that $G$ is increasing in $[0,1]$. Observe also that $G(|1-\rho|)$ recasts as
\begin{equation}\label{eq:recastmomentum}
    G(|1-\rho|)=1-\rho^2  + 4\chi_{\{\rho>1\}}(\rho-1),
\end{equation}
where $\chi$ denotes the indicator function. Therefore, if $\rho\leq 1$ in $\R$, it follows that $p(u)=Q(u)$, where $Q$ is the standard renormalized momentum \cite{bethuel0}
\begin{equation}\label{eq:usualmomentum}
Q(u)=\frac12\int_\R(1-\rho^2)\theta'\d x=-\frac12\int_\R\langle iu',u\rangle\frac{1-|u|^2}{|u|^2}\d x.
\end{equation}
The sort of redefined momentum \eqref{eq:momentum} is suitable for applying symmetric decreasing rearrangements, as can be verified in Section~\ref{sec:properties}. In any case, we will show that eventually the solutions we obtain have modulus smaller than one.

Let us define the sets
\[\boX=\{(u,v)\in \NE\times H^1(\R):\, |u|< 2\},\quad \boX_{q,m}=\{(u,v)\in \boX:\, \,p(u)=q,\, \|v\|^2_2=m\},\]
for $q\geq 0$, $m\geq 0$. Notice that we consider only real--valued functions for the second component of the pairs in $\boX$. We also stress that, if $u\in\NE$ and $|u|<2$, then $|1-|u||\in (0,1)$, which is precisely the interval where $G$ in \eqref{eq:momentum} is increasing. This idea will be behind some proofs of  Section~\ref{sec:properties}. 

We will look for critical points in $\boX_{q,m}$ of the renormalized energy functional
\begin{equation}\label{eq:energy}
E(u,v)=\int_{\R} \Big(\frac12|u'|^2+\frac12(v')^2+\frac14(1-|u|^2)^2+\frac{\beta}{4}v^4-\frac{\alpha}{2}(1-|u|^2)v^2\Big)\d x,
\end{equation}
which is finite for every $u\in\boE(\R)$, $v\in H^1(\R)$. Moreover, for $u=\rho e^{i\theta}\in\NE$, the energy recasts as
\[E(u,v)=\int_{\R} \Big(\frac12(\rho')^2+\frac14(1-\rho^2)^2+\frac12\rho^2(\theta')^2+\frac12(v')^2+\frac{\beta}{4}v^4-\frac{\alpha}{2}(1-\rho^2)v^2\Big)\d x.\]
Notice that, if $(u,v)\in\boX_{q,m}$, then $\int_\R|u|^2v^2\d x=m-\int_\R(1-|u|^2)v^2\d x$.
Thus, the proper energy \eqref{eq:originalenergy} and the renormalized energy \eqref{eq:energy} differ only in a constant. From now on, we will frequently just call energy to \eqref{eq:energy}.

Observe that 
\[E(u,v)\geq -\frac{\alpha}{2}m,\quad\text{for every }(u,v)\in \boX_{q,m}.\]
Therefore, the energy is bounded from below in $\boX_{q,m}$ and, as a consequence, one may define the miminizing surface
\[\Emin(q,m)=\inf\{E(u,v):\, (u,v)\in \boX_{q,m}\}.\]

Let us first analyze the semitrivial cases $\Emin(0,m)$ and $\Emin(q,0)$. On the one hand, {in the miscible regime $\alpha^2\leq \beta$, it is simple to prove (see Theorem~\ref{thm:properties})} that
\[\Emin(0,m)=0,\quad\text{for every }m\geq 0,\]
and the infimum is never attained if $m>0$. On the other hand, for $m=0$ we have the following well--known result:

\begin{theorem}[\cite{bethuel0}]\label{thm:scalar}
    For every $q\in (0,\pi/2)$ there exists $\bu_q\in\NE$, with $0<|\bu_q|<1$, $p(\bu_q)=q$, such that 
    \begin{equation}\label{eq:energygray}
        E(\bu_q,0)=\Emin(q,0)<\sqrt{2}q.
    \end{equation}
    Moreover, there exists $c=c(q)\in (0,\sqrt{2})$ such that $E(\bu_q,0)=(2-c^2)^\frac32/3$, and $\bu_q$ is the unique solution, up to invariances, to \eqref{eq:u}. These solutions are explicit and are given by 
\begin{equation}\label{eq:explicitsol}
    \bu_q(x)=\sqrt{\frac{2-c^2}{{2}}}\tanh\left(\frac{\sqrt{2-c^2}}{2}x\right)-\frac{c}{\sqrt{2}}i.
\end{equation}
    Besides, there exists $\bu_{\pi/2}\in\boE(\R)$ such that
    \begin{equation}\label{eq:energyblack}
        E(\bu_{\pi/2},0)=\frac{\sqrt{8}}{3}=\min\left\{E(u):\,u\in\boE(\R),\,\inf_\R|u(x)|=0\right\},
    \end{equation}
    which is the unique solution, up to invariances, to \eqref{eq:u} with $c=0$, and whose explicit expression is \eqref{eq:explicitsol} with $c=0$.
\end{theorem}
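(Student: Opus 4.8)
Since the statement collects well-known facts about the scalar defocusing Gross--Pitaevskii equation, the plan is to recover it along the classical route of \cite{bethuel0}, keeping track of where the modified momentum \eqref{eq:momentum} enters. First I would note that, with $v=0$, the functional \eqref{eq:energy} reduces to the Ginzburg--Landau energy $E(u,0)=\int_\R\big(\tfrac12|u'|^2+\tfrac14(1-|u|^2)^2\big)\d x$, and that a critical point of $E(\cdot,0)$ on $\boX_{q,0}$ solves \eqref{eq:u} with $c$ the Lagrange multiplier of the constraint $p(u)=q$. Then I would integrate \eqref{eq:u} by hand: writing a nonvanishing solution as $u=\rho e^{i\theta}$ and separating real and imaginary parts, the imaginary part integrates (using \eqref{condinftyuv}) to $\rho^2\theta'=\tfrac c2(1-\rho^2)$, and substituting into the real part yields an autonomous second order ODE for $\rho$ with first integral $\big((\rho^2)'\big)^2=(1-\rho^2)^2(2\rho^2-c^2)$; a quadrature then gives, up to translation, $\rho^2=1-\tfrac{2-c^2}{2}\sech^2\!\big(\tfrac{\sqrt{2-c^2}}{2}\,\cdot\,\big)$, so that $\min_\R|u|^2=c^2/2>0$ when $c>0$ (hence $u\in\NE$ and $0<|u|<1$), and recovering $\theta$ reproduces \eqref{eq:explicitsol}. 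For $c=0$ the phase equation forces $\theta'\equiv 0$, the profile is real up to a constant phase, and $\rho''=(\rho^2-1)\rho$ with $\rho\to1$ has the odd solution $\rho(x)=\tanh(x/\sqrt2)$, which vanishes at the origin, so that $\bu_{\pi/2}\in\boE(\R)\setminus\NE$.

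Next I would compute, for these profiles, $E(u,0)=\tfrac13(2-c^2)^{3/2}$ and (using that $|u|<1$, so $p=Q$) $p(u)=\arccos\!\big(\tfrac{c}{\sqrt2}\big)-\tfrac c2\sqrt{2-c^2}$; both are explicit decreasing functions of $c$, and $p$ maps $(0,\sqrt2)$ bijectively onto $(0,\pi/2)$. For $q\in(0,\pi/2)$ I would let $c=c(q)\in(0,\sqrt2)$ be the inverse and $\bu_q$ the associated profile, so that $p(\bu_q)=q$; the bound \eqref{eq:energygray} then follows since $\tfrac{d}{dc}\big(\sqrt2\,p(\bu_q)-E(\bu_q,0)\big)=(c-\sqrt2)\sqrt{2-c^2}<0$ while both terms vanish as $c\to\sqrt2$. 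The characterization \eqref{eq:energyblack} I would prove directly: if $u\in\boE(\R)$ has $\inf_\R|u|=0$, then (since $\boE(\R)\subset C(\R)$) it vanishes at some $x_0$, and on each half-line issuing from $x_0$ the arithmetic--geometric mean inequality gives $\int\big(\tfrac12|u'|^2+\tfrac14(1-|u|^2)^2\big)\d x\ge\tfrac1{\sqrt2}\int\big|\,|u|'\,\big|\,(1-|u|^2)\d x\ge\tfrac1{\sqrt2}\big|\int_0^1(1-s^2)\d s\big|=\tfrac{\sqrt2}{3}$; adding the two contributions gives $E(u,0)\ge\sqrt8/3$, with equality forcing $u$ real with $|u|'=\tfrac1{\sqrt2}(1-|u|^2)$, i.e.\ $u=\bu_{\pi/2}$ up to the invariances.

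The core is the identity $E(\bu_q,0)=\Emin(q,0)$ for $q\in(0,\pi/2)$. Here I would first exploit the shape of $p$ to reduce to moduli bounded by $1$: given $(u,0)\in\boX_{q,0}$, replace $\rho=|u|$ by $\tilde\rho:=\min\{\rho,2-\rho\}\in(0,1]$ (admissible since $|u|<2$), keeping the phase $\theta$; then $|1-\tilde\rho|=|1-\rho|$, so $G(|1-\tilde\rho|)=G(|1-\rho|)$ and $p$ is unchanged, while $|\tilde\rho'|=|\rho'|$, $\tilde\rho\le\rho$ and $(1-\tilde\rho^2)^2\le(1-\rho^2)^2$, so $E$ does not increase; moreover $p(\tilde u)=Q(\tilde u)$ since $\tilde\rho\le1$. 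On this reduced class, completing the square in $\theta'$ gives
\[E(u,0)-c\,p(u)=\int_\R\Big(\tfrac12(\rho')^2+\tfrac14(1-\rho^2)^2+\tfrac12\rho^2\big(\theta'-\tfrac{c(1-\rho^2)}{2\rho^2}\big)^2-\tfrac{c^2(1-\rho^2)^2}{8\rho^2}\Big)\d x,\]
and $\bu_q$ is exactly the configuration that kills the square and saturates, on the region $\{\rho^2\ge c^2/2\}$, the arithmetic--geometric bound for the remaining integrand (whose equality case is precisely the first integral above). To upgrade this to a proof that $E(\bu_q,0)=\Emin(q,0)$, with $\bu_q$ the unique minimizer up to translations and phase, I would run the direct method in $\boX_{q,0}$: pick a minimizing sequence, symmetrize it, and use a concentration--compactness argument to exclude vanishing and dichotomy — the scalar prototype of the analysis carried out later in this paper; alternatively, one combines the ODE uniqueness obtained above with the known concavity of $q\mapsto\Emin(q,0)$.

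The step I expect to be the main obstacle is this last one. The difficulty is that the free energy $E(\cdot,0)-c\,p(\cdot)$ is \emph{not} bounded below on $\NE\cap\{|u|<2\}$: a modulus that dips to a small height $\varepsilon$ over an interval of length $\delta$ makes the square-completed integrand above of order $-c^2\delta/\varepsilon^2$, which overwhelms the Dirichlet cost once $\delta$ is of order $\varepsilon/c$ or larger. Such competitors, however, carry momentum of order $\delta/\varepsilon^2\to\infty$, so they are ruled out by the constraint $p(u)=q$ — which therefore has to be used in an essential way. Establishing the precompactness of minimizing sequences (no vanishing, no splitting) and passing to the limit in $p$ is where the real work lies, and it is the same difficulty, in a scalar disguise, that the rest of the paper faces for the coupled system.
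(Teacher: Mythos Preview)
The paper does not prove Theorem~\ref{thm:scalar}; it is quoted from \cite{bethuel0} as a known result. The only point the paper addresses is in the Remark immediately following the statement: reconciling the modified momentum $p$ used here with the classical renormalized momentum $Q$ from \cite{bethuel0}. The paper's argument is that, by Lemma~\ref{lemma:sameinf} (which in turn rests on the rearrangement Lemma~\ref{lemma:Adecreases}), the infimum defining $\Emin(q,0)$ may be restricted to competitors with $|u|\le1$; on that set $p=Q$ by \eqref{eq:recastmomentum}, and since $0<|\bu_q|<1$ one gets $E(\bu_q,0)=F_{\min}(q)=\Emin(q,0)$.

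Your reduction to $|u|\le1$ via $\tilde\rho=\min\{\rho,2-\rho\}=1-|1-\rho|$ is a genuinely different and more elementary route to this same point: it preserves $G(|1-\rho|)$ and hence $p$ exactly, keeps $|\rho'|$ pointwise, and decreases both $(1-\rho^2)^2$ and $\rho^2(\theta')^2$, so $E$ does not increase. This avoids the rearrangement machinery entirely for the scalar reduction, at the cost of not producing the symmetry that Lemma~\ref{lemma:Adecreases} gives (which is irrelevant here but essential later in the paper). The rest of your sketch---the ODE integration, the explicit formulas for $E$ and $p$ as functions of $c$, the Modica--Mortola-type bound for \eqref{eq:energyblack}, and the honest identification of compactness of minimizing sequences as the real content---is correct and goes well beyond what the paper does; the paper simply defers all of that to \cite{bethuel0}.
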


    \begin{remark}
        We note that the classical minimization problem establishes the renormalized momentum $Q$, given by \eqref{eq:usualmomentum}, as a constraint. Specifically, 
    \begin{equation*}
    E(\bu_q,0)=\inf\{E(u,0):\, u\in \NE,\, Q(u)=q\}:=F_{\textup{min}}(q).
    \end{equation*}
    However, we will show in Lemma~\ref{lemma:sameinf} that the infimum in $\Emin(q,0)$ may equivalently be taken over functions $u\in\NE$ such that $|u|\leq 1$. Therefore, in light of \eqref{eq:recastmomentum}, and taking into account that $0<|\bu_q|<1$ in $\R$, it follows 
    \[E(\bu_q,0)=F_{\textup{min}}(q)\leq\inf\{E(u,0):\, 0<|u|\leq 1,\, Q(u)=q\}=\Emin(q,0)\leq E(\bu_q,0).\]
    Therefore, the identity $E(\bu_q,0)=\Emin(q,0)$ in the statement of Theorem~\ref{thm:scalar} remains entirely valid.
    \end{remark}

The properties of the scalar dark soliton stated in the previous result will be used to obtain fundamental regularity and monotonicity properties of the minimizing surface $\Emin(q,m)$. These include the key subadditivity property required for ruling out dichotomy. The properties are gathered in the following result:

\begin{theorem}\label{thm:properties}
	Let $\alpha>0$ and $\beta > 0$ satisfy $\alpha^2\leq\beta$. 
	\begin{enumerate}
		\item \label{item:monotoneq} For every $m\geq 0$, the function $q\mapsto \Emin(q,m)$ is nondecreasing in $[0,\infty)$ and $\Emin(0,m)=0$. In particular, $\Emin(q,m)\geq 0$ for all $q\in [0,\pi/2)$, $m\geq 0$.
		\item \label{item:monotonem} For every $q\geq 0$, the function $m\mapsto \Emin(q,m)+\alpha m/2$ is nondecreasing in $[0,\infty)$. 
		\item \label{item:decreasingm} For $0\leq  q_1\leq q_2<\pi/2$ and $0\leq m_1\leq m_2$, one has
			\begin{equation}\label{lipsineq}
				\Emin(q_2,m_2)\leq \Emin(q_1,m_1)+\sqrt{2}(q_2-q_1).
			\end{equation}
            In particular, for every $q\in [0,\pi/2)$, the function $m\mapsto \Emin(q,m)$ is nonincreasing in $[0,\infty)$, and
            \begin{equation}\label{eq:energeticallyfav}
            \Emin(q,m)<\Emin(q,0),\quad\text{for all }q\in (0,\pi/2),\, m>0.
            \end{equation}
		\item  \label{item:lipschitz} The function $(q,m)\mapsto\Emin(q,m)$ is Lipschitz in $[0,\pi/2)\times[0,\infty)$.
        \item \label{item:subadd} For $q_1\in [0,\pi/2)$, $q_2\in [0,\pi/2)$, $m_1\geq 0$, and $m_2\geq 0$, it follows 
    \begin{equation}\label{eq:subadditive}
    \Emin(q_1+q_2,m_1+m_2)\leq\Emin(q_1,m_1)+\Emin(q_2,m_2),
    \end{equation}
    and the equality in \eqref{eq:subadditive} holds if, and only if, $(q_1+q_2)(q_1+m_1)(q_2+m_2)=0$.
    
	\end{enumerate}
\end{theorem}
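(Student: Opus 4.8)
Throughout, write $u=\rho e^{i\theta}$ for $u\in\NE$ and recall from \eqref{eq:energy} that only the term $\tfrac12\rho^2(\theta')^2$ depends on the phase.

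\emph{Items (i)--(ii): soft scalings.} For (i), given $(u,v)\in\boX_{q_2,m}$ with $0\le q_1\le q_2$, $q_2>0$, set $u_s=\rho e^{is\theta}$ with $s=q_1/q_2\in[0,1]$. Then $u_s\in\NE$, $|u_s|=\rho\in(0,2)$, $p(u_s)=s\,p(u)=q_1$ by \eqref{eq:momentum}, and $E(u_s,v)-E(u,v)=\tfrac{s^2-1}2\int_\R\rho^2(\theta')^2\d x\le0$; taking the infimum gives $\Emin(q_1,m)\le\Emin(q_2,m)$, and the positivity clause follows from $\Emin(0,m)=0$. For (ii), completing the square gives, for $(u,v)\in\boX_{q,m}$,
\[E(u,v)+\tfrac\alpha2 m=\int_\R\Big(\tfrac12|u'|^2+\tfrac12(v')^2+\tfrac14(1-|u|^2)^2+\tfrac\beta4 v^4+\tfrac\alpha2|u|^2v^2\Big)\d x,\]
an integral of nonnegative terms; scaling $v\mapsto tv$ with $t=\sqrt{m_1/m_2}\le1$ multiplies each $v$-dependent term by a factor $\le1$ and leaves the $u$-terms fixed, so $(u,tv)\in\boX_{q,m_1}$ and $\Emin(q,m_1)+\tfrac\alpha2 m_1\le\Emin(q,m_2)+\tfrac\alpha2 m_2$.

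\emph{Item (iii).} Here I first record that for admissible $u$ one has $1-|u|^2\in H^1(\R)$ (since $u\in L^\infty$ and $(1-|u|^2)'=-2\Re(\bar u u')\in L^2$), hence $|u|\to1$ at $\pm\infty$; also $v\to0$. Starting from a near-minimizer $(u_1,v_1)$ of $\Emin(q_1,m_1)$ I perform two independent modifications far out where $|u_1|\approx1$, $v_1\approx0$. \textbf{Raising the momentum:} for $\delta=q_2-q_1>0$, glue to the right of $u_1$ a translated copy of the scalar dark soliton $\bu_\delta$ of Theorem~\ref{thm:scalar}, matching asymptotic phases by a constant rotation and interpolating across a flat stretch of vanishing cost; the glued $u_2$ has $p(u_2)=q_1+\delta+o(1)$ and $E(u_2,v_1)=E(u_1,v_1)+E(\bu_\delta,0)+o(1)$ as the separation $\to\infty$, with $E(\bu_\delta,0)<\sqrt2\,\delta$ by \eqref{eq:energygray}; a small phase rescaling (as in (i)) corrects the momentum exactly. \textbf{Raising the mass:} add to $v_1$ a low, wide bright bump $w$ of mass $m_2-m_1$, placed far out; the extra contributions $\tfrac12\|w'\|_2^2$, $\tfrac\beta4\|w\|_4^4$, $-\tfrac\alpha2\int(1-|u_1|^2)w^2$ and the cross terms with $v_1$ are all $o(1)$ by Cauchy--Schwarz and the smallness/spreading of $w$. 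Combining, and letting the errors vanish, yields \eqref{lipsineq}; its ``in particular'' consequences are immediate except for \eqref{eq:energeticallyfav}, for which I keep $u=\bu_q$ and use that the one-dimensional Schr\"odinger operator $-\tfrac12\partial_{xx}-\tfrac\alpha2(1-|\bu_q|^2)$ admits a negative-energy bound state $\psi_0$ (any attractive, not identically zero potential well does in $1$D), so that $E(\bu_q,\sqrt m\,\psi_0)<E(\bu_q,0)$ for small $m>0$; for large $m$ one distributes the bright mass between a fixed small multiple of $\psi_0$ and a far-away spread-out bump, keeping the energy strictly below $\Emin(q,0)$.

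\emph{Items (iv)--(v).} Item (iv) is formal: for $q_1\le q_2$, (i) and \eqref{lipsineq} (with $m_1=m_2$) give $0\le\Emin(q_2,m)-\Emin(q_1,m)\le\sqrt2(q_2-q_1)$; for $m_1\le m_2$, the monotonicity of $m\mapsto\Emin(q,m)$ and of $m\mapsto\Emin(q,m)+\tfrac\alpha2 m$ give $-\tfrac\alpha2(m_2-m_1)\le\Emin(q,m_2)-\Emin(q,m_1)\le0$, and the triangle inequality yields a global Lipschitz constant $\max\{\sqrt2,\alpha/2\}$. For (v), the inequality \eqref{eq:subadditive} is the far-apart gluing of near-minimizers of $\Emin(q_1,m_1)$ and $\Emin(q_2,m_2)$ (the ``raising the momentum'' construction of (iii), now with $V=v_1+v_2$). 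The equality analysis is the crux. The ``if'' direction is a direct check using $\Emin(0,\cdot)\equiv0$: each of $q_1+q_2=0$, $(q_1,m_1)=(0,0)$, $(q_2,m_2)=(0,0)$ makes both sides of \eqref{eq:subadditive} coincide. For the ``only if'' direction one must produce a strict gain whenever $q_1+q_2>0$, $q_1+m_1>0$ and $q_2+m_2>0$. When both $q_i>0$: in the glued configuration rescale the phase of the first piece by $1+tq_2$ and of the second by $1-tq_1$, which preserves the total momentum $q_1+q_2$ while changing the energy by $t\,(q_2A_1-q_1A_2)+\tfrac{t^2}{2}(q_2^2A_1+q_1^2A_2)$ with $A_i=\int\rho_i^2(\theta_i')^2\d x>0$ bounded below (near-minimizers have $\inf|u_i|$ bounded away from $0$, since $\Emin(q_i,m_i)<\sqrt8/3$); a suitable small $t$ makes this strictly negative, after breaking the exceptional coincidence $q_2A_1=q_1A_2$ by a harmless tiny extra dark-soliton insertion into one piece. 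When exactly one $q_i$ vanishes, say $q_2=0$ (forcing $m_2>0$) and $q_1>0$, the required inequality is $\Emin(q_1,m_1+m_2)<\Emin(q_1,m_1)$, a strict version of the mass monotonicity: truncating a near-minimizer so that $|u|\le1$ (a cut-off argument in the spirit of Lemma~\ref{lemma:sameinf}) turns $1-|u|^2\ge0$, $\not\equiv0$ into a genuine attractive potential, and the bound-state perturbation of (iii) then lowers the energy at the prescribed larger mass; as a cross-check, the strict concavity of $q\mapsto\Emin(q,0)$ (from $\Emin(q,0)=(2-c(q)^2)^{3/2}/3$ with $\tfrac{d}{dq}\Emin(q,0)=c(q)$ strictly decreasing) gives the degenerate-mass subcase directly.

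\emph{Main obstacle.} The genuinely technical points are the rigorous implementation of the gluing in the energy space $\boE(\R)$---matching and interpolating the phase in the transition region and quantifying the $o(1)$ errors in energy and momentum---and, in item (v), the strict-inequality bookkeeping: ensuring the strict gain survives uniformly in every non-degenerate configuration of $(q_1,m_1,q_2,m_2)$, in particular that near-optimizers may be assumed bounded below in modulus and (after truncation) with $|u|\le1$. The remaining arguments are soft.
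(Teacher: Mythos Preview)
Your arguments for items (i), (ii) and (iv) coincide with the paper's, and your gluing approach to \eqref{lipsineq} in item (iii) is a legitimate alternative to the paper's route (which first passes to the dense class $\boX_{q,m}^0$ of compactly supported, symmetric competitors via Lemma~\ref{lemma:sameinf} and then glues disjoint bumps exactly). Your bound-state argument for \eqref{eq:energeticallyfav} is also a valid variant of the paper's rescaling $v_s(x)=\sqrt{s}\,v(sx)$ in Lemma~\ref{lemma:smallE}.

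The genuine gap is in item (v), the \emph{strict} subadditivity when both $q_1,q_2>0$. Your phase-rescaling trick produces an energy variation
\[
f(t)=t\,(q_2A_1-q_1A_2)+\tfrac{t^2}{2}(q_2^2A_1+q_1^2A_2),\qquad A_i=\int_\R\rho_i^2(\theta_i')^2\d x,
\]
whose optimal gain is $\dfrac{(q_2A_1-q_1A_2)^2}{2(q_2^2A_1+q_1^2A_2)}$. This vanishes precisely when $q_2A_1=q_1A_2$, and nothing prevents a minimizing sequence from satisfying $q_2A_1^{(n)}-q_1A_2^{(n)}\to0$. Your proposed fix---``a harmless tiny extra dark-soliton insertion into one piece''---does not work as stated: inserting $\bu_\epsilon$ costs energy $E(\bu_\epsilon,0)\sim\sqrt{2}\,\epsilon$ while perturbing $A_1$ only by $O(\epsilon)$, so the linear term and the cost are of the same order and no net strict gain is secured uniformly along the sequence. (Incidentally, the claim that near-minimizers have $\inf_\R|u_i|$ bounded away from~$0$ because $\Emin(q_i,m_i)<\sqrt{8}/3$ invokes \eqref{eq:nu}, which in the paper is proved only under hypothesis \eqref{eq:smallnesscondq} or \eqref{eq:smallnesscondm}; Theorem~\ref{thm:properties} must hold without these.)

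The paper resolves this by an entirely different mechanism: after reducing to $\boX^0_{q_j,m_j}$ (Lemma~\ref{lemma:sameinf}), it rearranges the disjoint sum of the two bumps and invokes the refined P\'olya--Szeg\H{o} inequality of Theorem~\ref{thm:key}, which yields the quantitative strict drop
\[
\|(h^\star)'\|_2^2\le\|(\rho^{(1)})'\|_2^2+\|(\rho^{(2)})'\|_2^2-\tfrac34\min_j\|(\rho^{(j)})'\|_2^2,
\]
and similarly for $v$. The uniform lower bound on $\min_j\|(\rho^{(j)}_n)'\|_2$ (resp.\ on $\min_j\|(v^{(j)}_n)'\|_2$ when $m_1,m_2>0$) comes from Lemma~\ref{lemma:delta}, whose proof uses only $\Emin(q,m)<\sqrt{2}q$ and \eqref{eq:energeticallyfav}. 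This rearrangement step is the load-bearing idea you are missing; without it (or a genuine substitute that produces a \emph{uniform} strict gain along arbitrary minimizing sequences), the proof of \ref{item:subadd} is incomplete.
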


The properties of $\Emin$ ultimately lead to the compactness of the minimizing sequences and, in turn, to the existence of a finite--energy solution to \eqref{TWS}--\eqref{condinftyuv}. For instance, we highlight that \eqref{eq:energeticallyfav} plays a central role in guaranteeing the nonvanishing of minimizing sequences. This inequality can be formally interpreted as expressing an \emph{energetic advantage} of nontrivial bright components over those with vanishing mass.

The main result of the paper is the following:

\begin{theorem}\label{thm:mainresult}
    Let $\alpha>0$ and $\beta > 0$ satisfy $\alpha^2\leq\beta$, and let $q\in (0,\pi/2)$ and $m>0$. Assume one of the following two conditions:
    \begin{align} \label{eq:smallnesscondq}\tag{H1}
        &\alpha^2<\beta,\quad \Emin(q,m)< \Big(1-\frac{\alpha^2}{\beta}\Big)\frac{\sqrt{8}}{3},
        \\
     \label{eq:smallnesscondm}\tag{H2}
         &\Emin(q,m)+\frac{\alpha m}{2}<\frac{\sqrt{8}}{3}.
    \end{align}
    Then, there exists $(u,v)\in \boX_{q,m}$ such that $\Emin(q,m)=E(u,v)$. Moreover, $0<|u|<1$ in $\R$, $v>0$ in $\R$, and $1-|u|$ and $v$ are even and radially nonincreasing. Furthermore, there exist $c,\lambda\in\R$ such that
    \begin{equation}\label{eq:mulambdaestimates}
    0<c<\sqrt{2},\quad  \frac{c^2}{2}\alpha<\lambda< 2\alpha+\sqrt{32}\frac{q}{m},
    \end{equation}
    and $(u,v)$ is a solution to \eqref{TWS}. 
\end{theorem}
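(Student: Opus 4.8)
The plan is to run a concentration--compactness argument on minimizing sequences for $\Emin(q,m)$, and then perform a bootstrap / regularity analysis to extract the qualitative properties of the minimizer. First I would fix a minimizing sequence $(u_n,v_n)\subset\boX_{q,m}$, so $E(u_n,v_n)\to\Emin(q,m)$. Writing $u_n=\rho_n e^{i\theta_n}$, the energy bound together with $E(u_n,v_n)\ge -\alpha m/2$ yields uniform control of $\|\rho_n'\|_2$, $\|1-\rho_n^2\|_2$, $\|v_n'\|_2$, $\|v_n\|_2^2=m$, and hence also of $\|\rho_n\theta_n'\|_2$ (the latter needs a little care because it is the kinetic term $\tfrac12\rho_n^2(\theta_n')^2$ that is controlled, but combined with $\rho_n\le 2$ and the lower bound on $\rho_n$ coming from the nonvanishing once vanishing is excluded, this is standard). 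I would then apply the concentration--compactness principle of Lions to the sequence of mass-like densities, say $f_n = (\rho_n')^2 + (1-\rho_n^2)^2 + \rho_n^2(\theta_n')^2 + (v_n')^2 + v_n^2$, whose integrals are bounded. The three alternatives are compactness (up to translation), vanishing, and dichotomy, and the heart of the proof is to exclude the last two.

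Vanishing is excluded using \eqref{eq:energeticallyfav}: if $v_n$ were to vanish in the $L^\infty$-on-bounded-intervals sense, one shows $\int v_n^4\to 0$, so the coupling term $\tfrac{\alpha}{2}\int(1-\rho_n^2)v_n^2$ becomes negligible against a scalar-type energy, forcing $\liminf E(u_n,v_n)\ge \Emin(q,0)$, contradicting \eqref{eq:energeticallyfav} since $m>0$; one must also rule out the possibility that instead the $u_n$-part vanishes, but $p(u_n)=q>0$ prevents $1-\rho_n^2$ and $\theta_n'$ from both disappearing locally. Dichotomy is excluded using the strict subadditivity \eqref{eq:subadditive}: if the sequence splits into two lumps carrying $(q_1,m_1)$ and $(q_2,m_2)$ with $q_1+q_2=q$, $m_1+m_2=m$, then $\Emin(q,m)\ge \Emin(q_1,m_1)+\Emin(q_2,m_2)$, but strict subadditivity (which holds unless one of the pieces is trivial) gives a strict inequality, contradicting that $\Emin(q,m)$ is the infimum. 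This step is delicate because cutting off a function in $\NE$ (which does not vanish at infinity) to localize it is not innocuous: one has to glue the lifting $\theta_n$ back to a constant outside the lump, controlling the extra momentum and energy introduced — this is exactly the "nontrivial manipulations of the energy and momentum in terms of the lifting" flagged in the introduction, and I expect it, together with the symmetric-decreasing-rearrangement estimates of Section~\ref{sec:properties}, to be the main obstacle. The hypotheses \eqref{eq:smallnesscondq}, \eqref{eq:smallnesscondm} enter here (and/or in excluding vanishing): they guarantee $\Emin(q,m)$ stays below the black-soliton threshold $\tfrac{\sqrt8}{3}$, so no mass can escape into a "black" bubble where $\rho$ touches zero, keeping the limit inside $\NE$.

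Granting compactness, the limit $(u,v)$ lies in $\boX_{q,m}$ after passing to the limit in the constraints (lower semicontinuity gives $E(u,v)\le\Emin(q,m)$, hence equality, and then strong convergence upgrades the constraints to equalities — using that $G$ is monotone on $(0,1)$ for the momentum constraint, and Lemma~\ref{lemma:sameinf} to place the minimizer in $\{|u|\le1\}$). Next I would apply symmetric decreasing rearrangement to $1-|u|$ and to $v$ simultaneously: the Pólya--Szegő inequality does not increase the gradient terms, the rearranged pair has the same $\int(1-\rho^2)^2$, $\int v^4$, $\int v^2$, and by a Hardy--Littlewood-type inequality the coupling term $\int(1-\rho^2)v^2$ does not decrease, so the rearranged pair is still a minimizer and we may assume $1-|u|$, $v$ even and radially nonincreasing. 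Then Lagrange multiplier theory on the two constraints produces $c,\lambda\in\R$ such that $(u,v)$ solves \eqref{TWS} in the weak sense; elliptic regularity makes the solution classical. Strict positivity $v>0$ follows from the maximum principle applied to the second equation (if $v(x_0)=0$ at an interior point then $v\equiv0$, excluded since $m>0$), and $0<|u|<1$ follows from: nonvanishing of the limit (already established) plus a maximum-principle argument on $1-\rho^2$ using the first equation, or directly from Theorem~\ref{thm:scalar}-type comparison once the equation is known. Finally, the bounds \eqref{eq:mulambdaestimates} are read off the multipliers: testing the equations against suitable functions (e.g. $v$ itself, and using $p(u)=q$, $\|v\|_2^2=m$) yields the lower bound $c^2\alpha/2<\lambda$ and the upper bound $\lambda<2\alpha+\sqrt{32}\,q/m$, while $0<c<\sqrt2$ comes from the energy identity — one shows $E(u,v)<\tfrac{\sqrt8}{3}$ (from the hypotheses) which is the energy of the $c=0$ black soliton, forcing $c>0$, and subsonicity $c<\sqrt2$ follows from finiteness of the energy together with the structure of \eqref{eq:u}, much as in the scalar theory recalled after \eqref{eq:u}.
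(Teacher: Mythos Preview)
Your overall strategy matches the paper's: concentration--compactness on minimizing sequences, with vanishing ruled out via \eqref{eq:energeticallyfav} and dichotomy via the strict subadditivity \eqref{eq:subadditive}, and the hypotheses \eqref{eq:smallnesscondq}/\eqref{eq:smallnesscondm} entering precisely to keep $|u_n|$ uniformly bounded away from zero (so no black-soliton bubble forms and the limit stays in $\NE$). One organizational difference: rather than rearranging the minimizer a posteriori as you propose, the paper takes the minimizing sequence from the outset in the symmetric compactly-supported class $\boX_{q,m}^0$ (Lemma~\ref{lemma:sameinf}) and lets the symmetry pass to the pointwise limit; your route also works, but note that the rearranged pair need not have momentum exactly $q$ --- one must rescale $\theta'$ by a factor $\gamma\in(0,1]$ as in Lemma~\ref{lemma:Adecreases}.

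There is, however, a genuine gap in your derivation of \eqref{eq:mulambdaestimates}. Your argument for $c>0$ (``$E(u,v)<\sqrt{8}/3$ is the black-soliton energy, forcing $c>0$'') does not work: the system is coupled, and nothing you have written excludes a $c=0$ pair with $v\not\equiv0$ of strictly smaller energy. The paper's argument is direct: from the first equation one computes $\theta'=\tfrac{c}{2}(1-\rho^2)/\rho^2$, hence $q=p(u)=\tfrac{c}{4}\int(1-\rho^2)^2/\rho^2\,dx$, and $q>0$ forces $c>0$. Similarly, neither $c<\sqrt{2}$ nor the lower bound $\lambda>\tfrac{c^2}{2}\alpha$ follow from ``testing against $v$'' or from scalar theory for \eqref{eq:u}; the coupled system requires the first integral. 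Multiplying the reduced equations for $(\rho,v)$ by $\rho'$, $v'$ and integrating yields
\[
(\rho')^2+(v')^2=\Big(1-\frac{c^2}{2\rho^2}\Big)\frac{(1-\rho^2)^2}{2}+\Big(\frac{\beta}{2}v^2+\alpha\rho^2-\lambda\Big)v^2,
\]
and a pointwise sign analysis of the right-hand side gives $\rho(x)^2\ge\min\{\lambda/\alpha-\beta v(x)^2/(2\alpha),\,c^2/2\}$; combining this with a contradiction argument (if the first branch always dominated, testing the $v$-equation against $v$ would force $v\equiv0$) yields $\lambda>\tfrac{c^2}{2}\alpha$, and then $\rho^2\ge c^2/2$ for large $|x|$ together with $\rho<1$ gives $c<\sqrt{2}$. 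Your upper bound on $\lambda$ by testing the second equation against $v$ is correct and is exactly what the paper does.
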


Notice that, as expected from the scalar case, we obtain a subsonic propagation speed, namely $c\in (0,\sqrt{2})$. The proof of this fact follows from ODE arguments, as in \cite{Maris2006}.

{
Some comments about conditions \eqref{eq:smallnesscondq} and \eqref{eq:smallnesscondm} are in order. First, under condition \eqref{eq:smallnesscondq}, our main result yields the existence of a dark--bright soliton provided that $\Emin(q,m)$ is small enough. As shown by \eqref{eq:energeticallyfav}, a sufficient condition for \eqref{eq:smallnesscondq} is 
\[\alpha^2<\beta,\quad \Emin(q,0)\leq  \Big(1-\frac{\alpha^2}{\beta}\Big)\frac{\sqrt{8}}{3},\]
which means that $q$ is small, while $m$ can be as large as desired.

Our result also follows under condition \eqref{eq:smallnesscondm}. Notice that, by virtue of Theorem~\ref{thm:scalar}, this condition is trivially satisfied for $m=0$. Moreover, as shown by item \ref{item:monotoneq} in
Theorem~\ref{thm:properties}, $\Emin(q,m)\geq 0$ for all $q\in (0,\pi/2)$ and $m>0$, which implies that \eqref{eq:smallnesscondm} means a smallness requirement for the mass. It follows also from \eqref{eq:energeticallyfav} in Theorem~\ref{thm:properties} that a sufficient condition for \eqref{eq:smallnesscondm} is
\[\Emin(q,0)+\frac{\alpha m}{2}\leq\frac{\sqrt{8}}{3},\]
where, recall, $\Emin(q,0)$ is explicit, see Theorem~\ref{thm:scalar}. 

Regarding the necessity of conditions \eqref{eq:smallnesscondq} and \eqref{eq:smallnesscondm}, we remark that they are certainly not necessary for the mere existence of dark--bright solitons to \eqref{GPS} in the regime $\alpha^{2}\leq \beta$. Indeed, the Manakov case ($\alpha=\beta=1$) is not covered by our assumptions when $m$ is large, even though explicit dark--bright solitons exist for all $m$, see \cite{Busch_Anglin, Sheppard_Kivshar}. However, the Manakov system possesses special features, namely its integrability and its transitioning nature between the miscible and immiscible regimes, so that a general existence proof allowing $\alpha^{2}<\beta$ may in principle require additional hypotheses. In any case, it remains unclear whether conditions \eqref{eq:smallnesscondq} and \eqref{eq:smallnesscondm} are essential for our approach or simply technical.
}

We conclude this section with a brief discussion on the stability of dark--bright solitons. To the best of the author's knowledge, a rigorous analytical proof of their orbital stability remains an open problem, even in the Gross--Clark $(\beta=0)$ and the Manakov $(\alpha=\beta=1)$ models. A standard strategy for establishing the orbital stability of constrained minimizers is the approach developed in \cite{cazlions}. However, this method is not directly applicable here, as the quantity $p(u)$, which is fixed in our variational framework, is not conserved along the flow. {It is worth noting that the global renormalized momentum
\begin{equation}\label{eq:grmomentum}
    Q(\Psi)+P(\Phi):=-\frac12\int_\R\langle i\partial_x\Psi,\Psi\rangle\frac{1-|\Psi|^2}{|\Psi|^2}\d x + \frac12\int_\R \langle i\partial_x\Phi,\Phi\rangle\d x,
\end{equation}
is indeed a conserved quantity, at least formally. Consequently, in order to prove orbital stability, it would be desirable to establish a variational minimization principle at fixed global renormalized momentum that yields fully nontrivial minimizers. However, our approach based on symmetric decreasing rearrangements does not appear to be well suited to the functional \eqref{eq:grmomentum} as a constraint.} Nevertheless, the variational structure underlying our existence result suggests, at least on a heuristic level, that orbital stability should hold, and we hope our approach lays the groundwork for a future rigorous proof.

\section{Basics of symmetric decreasing rearrangements}\label{sec:rearrangements}

In order to be self--contained, let us state some basic definitions and properties of symmetric decreasing rearrangements. More details can be found in \cite{Burchard2009,LiebLoss2001}.

\begin{definition} 
	
    Given a measurable function $f:\R\to\R$, we say that it \emph{vanishes at infinity} if $|\{|f|>t\}|<\infty$ for all $t>0$. If $f$ vanishes at infinity, its \emph{symmetric decreasing rearrangement} is defined as
\[f^\star(x)=\int_0^\infty\chi_{\{|f|>t\}^\star}(x)\d t,\]
where $\{|f|>t\}^\star$ is the open interval centered at zero whose length coincides with $|\{|f|>t\}|$, and $\chi$ denotes the indicator function.
\end{definition}

One deduces straightaway from the definition that $f^\star$ is a nonnegative, even and lower semicontinuous function which is also nonincreasing in $(0,\infty)$. Furthermore, one of the fundamental properties of the symmetric decreasing rearrangements is that, for every $t>0$,  the following identity holds:

\begin{equation*}
	\{f^\star>t\}=\{|f|>t\}^\star,\quad\text{and therefore,}\quad|\{f^\star>t\}|=|\{|f|>t\}|.
\end{equation*}
As a consequence, one has the following relation.

\begin{proposition}\label{prop:integrals}
Let $f:\R\to\R$ be a measurable function that vanishes at infinity, and let $G:[0,\infty)\to\R$ be a function such that $G=G_1-G_2$, where $G_1$ and $G_2$ are both nondecreasing and, for at least one $j\in\{1,2\}$, one has that $\int_\R G_j(|f(x)|)\d x$ is finite. Then, 
\begin{equation}\label{sdr:integralidentity}
	\int_\R G(|f(x)|)\d x=\int_\R G(f^\star(x))\d x.
\end{equation}
In particular, $\|f\|_p=\|f^\star\|_p$ for every $1\leq p\leq \infty$.
\end{proposition}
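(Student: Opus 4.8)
The plan is to prove Proposition~\ref{prop:integrals} by reducing the integral identity to the layer-cake (distribution function) representation and then using the equimeasurability property $|\{f^\star>t\}|=|\{|f|>t\}|$ stated just above. First I would treat the case of a single nondecreasing function $H:[0,\infty)\to\R$ with $H(0)\geq 0$ (or, more precisely, reduce to the case $H(0)=0$ by subtracting the constant $H(0)$, whose integral contributions match trivially since $|\{f^\star>0\}|=|\{|f|>0\}|$). For such $H$, I would write, for $s\geq 0$,
\[
H(s)=H(0)+\int_0^\infty \chi_{\{H(0)+\tau<H(s)\}}\,\mathrm{d}\tau,
\]
using that $H$ is nondecreasing so that $\{s:\ H(s)>H(0)+\tau\}$ is (essentially) a half-line $(\sigma(\tau),\infty)$ or $[\sigma(\tau),\infty)$. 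Substituting $s=|f(x)|$ and integrating in $x$, Tonelli's theorem (applicable because the integrand is nonnegative) gives
\[
\int_\R \big(H(|f(x)|)-H(0)\big)\,\mathrm{d}x=\int_0^\infty\big|\{x:\ H(|f(x)|)>H(0)+\tau\}\big|\,\mathrm{d}\tau=\int_0^\infty\big|\{|f|>\sigma(\tau)\}\big|\,\mathrm{d}\tau.
\]
The analogous computation with $f^\star$ in place of $|f|$ yields the same right-hand side because $|\{f^\star>\sigma(\tau)\}|=|\{|f|>\sigma(\tau)\}|$ for every level $\sigma(\tau)>0$; here I should be slightly careful at $\sigma(\tau)=0$, but the set of such $\tau$ contributes the same finite or infinite amount on both sides since $f^\star$ and $|f|$ have the same support measure. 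This establishes \eqref{sdr:integralidentity} for a single monotone $H$ whenever $\int_\R(H(|f|)-H(0))\,\mathrm{d}x$ is finite (equivalently, whenever $\int_\R H_j(|f|)\,\mathrm{d}x<\infty$ in the hypothesis's notation for that piece).

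Next I would handle the difference $G=G_1-G_2$. Assume without loss of generality that $\int_\R G_1(|f|)\,\mathrm{d}x<\infty$ (the case $j=2$ is symmetric). Applying the single-function result to $G_1$ gives $\int_\R G_1(|f|)\,\mathrm{d}x=\int_\R G_1(f^\star)\,\mathrm{d}x<\infty$. For $G_2$, which is nondecreasing but whose integral need not be finite, the single-function computation still shows $\int_\R (G_2(|f|)-G_2(0))\,\mathrm{d}x=\int_\R (G_2(f^\star)-G_2(0))\,\mathrm{d}x$ as an identity in $[0,\infty]$ (both sides equal the same nonnegative layer-cake integral $\int_0^\infty|\{|f|>\sigma_2(\tau)\}|\,\mathrm{d}\tau$). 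Combining, and noting $G(0)=G_1(0)-G_2(0)$, either both sides of \eqref{sdr:integralidentity} are $-\infty+(\text{finite})$ in the same way, or $\int_\R G_2(|f|)\,\mathrm{d}x$ is also finite and then $\int_\R G(|f|)\,\mathrm{d}x = \int_\R G_1(|f|)\,\mathrm{d}x-\int_\R G_2(|f|)\,\mathrm{d}x = \int_\R G_1(f^\star)\,\mathrm{d}x-\int_\R G_2(f^\star)\,\mathrm{d}x=\int_\R G(f^\star)\,\mathrm{d}x$, with every subtraction legitimate. The final assertion $\|f\|_p=\|f^\star\|_p$ follows by taking $G(s)=s^p$ (monotone, with $G_2=0$) for $1\leq p<\infty$, and for $p=\infty$ directly from $\mathrm{ess\,sup}|f|=\inf\{t:\ |\{|f|>t\}|=0\}=\inf\{t:\ |\{f^\star>t\}|=0\}=\mathrm{ess\,sup}\, f^\star$.

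The main obstacle, such as it is, is purely bookkeeping: making the layer-cake representation rigorous for a merely nondecreasing (hence possibly discontinuous, not strictly increasing) $H$, so that the superlevel sets $\{s\geq 0:\ H(s)>c\}$ are genuine half-lines up to a null set and the "inverse" $\sigma(\tau)$ is well defined; and being careful about the boundary level $t=0$ and about $+\infty$ values when only one of the $\int_\R G_j(|f|)\,\mathrm{d}x$ is assumed finite. None of this requires anything beyond Tonelli's theorem and the equimeasurability identity recalled before the proposition, so I would keep the write-up short, citing \cite{LiebLoss2001} for the standard layer-cake formula and emphasizing only the monotonicity bookkeeping.
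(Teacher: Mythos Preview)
The paper does not prove this proposition: it is listed among the preliminary facts on rearrangements, with the reader referred to \cite{Burchard2009,LiebLoss2001}, and the only further comment is that \eqref{sdr:integralidentity} ``can alternatively be proved by applying'' Proposition~\ref{prop:compositions}. Your layer-cake plus equimeasurability argument is exactly the standard proof found in those references, so the approach is correct and matches what the paper implicitly invokes.

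One small imprecision: the phrase about ``subtracting the constant $H(0)$, whose integral contributions match trivially'' does not quite work as written, since $\int_\R H(0)\d x$ is infinite whenever $H(0)\neq 0$. This is harmless here, because the finiteness hypothesis on $\int_\R G_j(|f|)\d x$ (and every application made in the paper) effectively places you in the case $G_j(0)=0$; assuming this from the outset, your identity $\int_\R H(|f|)\d x=\int_0^\infty|\{|f|>\sigma(\tau)\}|\d\tau$ holds directly and the final subtraction $\int G_1-\int G_2$ is unambiguous.
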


The identity \eqref{sdr:integralidentity} can alternatively be proved by applying the following elementary property.

\begin{proposition}\label{prop:compositions}
    If $f:\R\to\R$ is a measurable function that vanishes at infinity and $G:[0,\infty)\to [0,\infty)$ is nondecreasing with $G(0)=0$, then $G\circ |f|$ is measurable, it vanishes at infinity, and $(G\circ |f|)^\star=G\circ f^\star$.
\end{proposition}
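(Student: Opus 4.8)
The plan is to verify the three assertions in order, using only the level-set description of the rearrangement and the hypotheses on $G$. First I would establish measurability of $G\circ|f|$: since $|f|$ is measurable and $G:[0,\infty)\to[0,\infty)$ is nondecreasing, $G$ is Borel measurable (a monotone function has at most countably many discontinuities), so the composition $G\circ|f|$ is measurable. Next, for the vanishing-at-infinity property and the main identity, the key computation is to compare the super-level sets of $G\circ|f|$ with those of $|f|$. Fix $t>0$. Because $G$ is nondecreasing with $G(0)=0$, the set $\{s\geq 0:\, G(s)>t\}$ is an interval of the form $(a_t,\infty)$ or $[a_t,\infty)$ for some $a_t=a_t(G,t)\in[0,\infty]$ (with the convention $a_t=\infty$ when $G$ never exceeds $t$). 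I would then argue that, up to the single level $s=a_t$ (which contributes a set of the form $|f|^{-1}(\{a_t\})$), one has
\[
\{G\circ|f|>t\}=\{|f|>a_t\}\quad\text{or}\quad\{G\circ|f|>t\}=\{|f|\geq a_t\},
\]
and in either case the set differs from $\{|f|>a_t\}$ by at most the null-or-not level set $\{|f|=a_t\}$; crucially, $\{G\circ|f|>t\}$ always contains $\{|f|>a_t\}$ and is contained in $\{|f|\geq a_t\}$. Since $f$ vanishes at infinity, $|\{|f|>a_t\}|<\infty$ when $a_t>0$, and when $a_t=0$ the inclusion $\{G\circ|f|>t\}\subseteq\{|f|>0\}$ together with $|f|$ vanishing at infinity gives finiteness again; hence $G\circ|f|$ vanishes at infinity.

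The heart of the matter is the identity $(G\circ|f|)^\star=G\circ f^\star$. By the defining formula and the level-set relation $\{h^\star>t\}=\{|h|>t\}^\star$ applied to $h=G\circ|f|$, it suffices to show that for each $t>0$ the symmetrized set $\{G\circ|f|>t\}^\star$ equals $\{G\circ f^\star>t\}$. Now $|\{G\circ|f|>t\}|=|\{|f|>a_t\}|$ for every $t$ that is not one of the (at most countably many) values for which $\{|f|=a_t\}$ has positive measure — and in fact this holds for \emph{every} $t$, because the level set $\{|f|=a_t\}$ of positive measure would force $G$ to be constant equal to some value $\leq t$ on a nondegenerate interval ending at $a_t$, and one checks the two sides still match after a short case analysis. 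Hence $\{G\circ|f|>t\}^\star=\{|f|>a_t\}^\star=\{f^\star>a_t\}$, using the level-set identity for $f$ itself. On the other hand, since $f^\star$ is nonnegative and $G$ is nondecreasing with $G(0)=0$, exactly the same argument that produced $a_t$ shows $\{G\circ f^\star>t\}$ is sandwiched between $\{f^\star>a_t\}$ and $\{f^\star\geq a_t\}$; and because $f^\star$ is the distribution-function-type object it takes each value $a_t>0$ on a set of measure zero unless it is constant there, in which case one again resolves the boundary level by the same case analysis. Putting these together gives $\{(G\circ|f|)^\star>t\}=\{G\circ f^\star>t\}$ for all $t>0$, and since $G\circ f^\star$ is itself even, nonincreasing on $(0,\infty)$ and lower semicontinuous (as $G$ is nondecreasing and lower semicontinuous can be arranged, or one argues directly that two functions with the same super-level sets for all $t$ coincide), we conclude $(G\circ|f|)^\star=G\circ f^\star$.

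I expect the main obstacle to be the careful bookkeeping at the single threshold value $s=a_t$: whether $\{G(s)>t\}$ is open or half-open at its left endpoint depends on whether $G$ has a jump or is continuous there, and this must be reconciled with the analogous ambiguity for $f^\star$ at the level $a_t$. The clean way to sidestep this is to observe that for every $t>0$ the set $\{|f|>a_t\}$ and the set $\{|f|\geq a_t\}$ have the \emph{same} symmetrized interval except possibly when $|\{|f|=a_t\}|>0$; and in that exceptional case one shows $a_t$ is a value at which $G$ is "flat from the left", so that replacing $a_t$ by $a_t^+$ (the right endpoint of the maximal flat interval, or noting that actually $G\circ|f|>t$ forces strict inequality $|f|>a_t$) removes the ambiguity. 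Once this point is handled, the rest is a direct unwinding of definitions and the identity $\{h^\star>t\}=\{|h|>t\}^\star$.
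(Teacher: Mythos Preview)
The paper does not prove this proposition: it is listed in Section~\ref{sec:rearrangements} among standard preliminary facts on rearrangements, with a reference to \cite{Burchard2009,LiebLoss2001}, so there is no in-paper argument to compare against. Your level-set approach is the standard one and is essentially correct, but there is one genuine gap and one point that deserves more care.

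The genuine gap is your treatment of the case $a_t=0$. You write that ``the inclusion $\{G\circ|f|>t\}\subseteq\{|f|>0\}$ together with $|f|$ vanishing at infinity gives finiteness again'', but the definition of vanishing at infinity only requires $|\{|f|>s\}|<\infty$ for $s>0$; the set $\{|f|>0\}$ can have infinite measure. In fact, if $G$ has a jump at $0$ (say $G(0)=0$ and $G(s)=1$ for $s>0$) and $f(x)=e^{-x^2}$, then $G\circ|f|\equiv 1$, which does \emph{not} vanish at infinity. So the proposition as literally stated requires an extra hypothesis such as right-continuity of $G$ at $0$; this is harmless for the paper, since the only $G$ used is $G(s)=s(2-s)$, which is smooth. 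Once $G$ is right-continuous at $0$ one has $a_t>0$ for every $t>0$, and then $\{G\circ|f|>t\}\subset\{|f|\geq a_t\}$ has finite measure because $|\{|f|\geq a_t\}|=\lim_{s\uparrow a_t}|\{|f|>s\}|$.

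The point that deserves more care is the boundary bookkeeping, which you correctly flag but do not quite resolve. The clean way to avoid the case analysis is this: since $G$ is nondecreasing, $G^{-1}((t,\infty))$ is an interval $I_t$ (either $(a_t,\infty)$ or $[a_t,\infty)$), and the \emph{same} interval appears when you compute $\{G(|f|)>t\}=\{|f|\in I_t\}$ and $\{G(f^\star)>t\}=\{f^\star\in I_t\}$. Equimeasurability of $|f|$ and $f^\star$ holds both for strict and nonstrict super-level sets (the latter by $|\{|f|\geq a\}|=\lim_{s\uparrow a}|\{|f|>s\}|$), so in either case $|\{G(|f|)>t\}|=|\{G(f^\star)>t\}|$ for all $t>0$. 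Since $G\circ f^\star$ is nonnegative, even and radially nonincreasing, this forces $(G\circ|f|)^\star=G\circ f^\star$ almost everywhere; pointwise equality can fail at the (at most countably many) jump levels unless one also assumes $G$ left-continuous, but a.e.\ equality is all that is ever used.
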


The following result is commonly known as the Hardy--Littlewood inequality.

\begin{theorem}\label{thm:HardyLittlewood}
    If $f,g:\R\to [0,\infty)$ are measurable functions that vanish at infinity, then
    \[\int_\R fg\d x\leq \int_\R f^\star g^\star\d x.\]
\end{theorem}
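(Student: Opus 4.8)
The plan is to reduce the inequality to a pointwise comparison of superlevel sets via the layer-cake (distribution-function) representation, and then exploit the fact that the superlevel sets of a symmetric decreasing rearrangement are intervals centered at the origin.

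First I would write each nonnegative integrand through its layer-cake decomposition, $f(x)=\int_0^\infty \chi_{\{f>s\}}(x)\d s$ and $g(x)=\int_0^\infty\chi_{\{g>t\}}(x)\d t$, which holds pointwise since $f,g\geq 0$. Substituting into $\int_\R fg\d x$ and applying Tonelli's theorem---legitimate because every integrand is nonnegative, so no integrability hypothesis is needed and both sides are allowed to be infinite---I would obtain
\[
\int_\R f g\d x=\int_0^\infty\!\!\int_0^\infty \big|\{f>s\}\cap\{g>t\}\big|\d s\d t,
\]
and, applying the identical manipulation to $f^\star,g^\star$,
\[
\int_\R f^\star g^\star\d x=\int_0^\infty\!\!\int_0^\infty \big|\{f^\star>s\}\cap\{g^\star>t\}\big|\d s\d t.
\]
Thus it suffices to prove the pointwise (in $s,t$) inequality $|\{f>s\}\cap\{g>t\}|\leq|\{f^\star>s\}\cap\{g^\star>t\}|$ and integrate.

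For the pointwise bound, on the left the intersection is contained in each factor, so $|\{f>s\}\cap\{g>t\}|\leq\min(|\{f>s\}|,|\{g>t\}|)$. On the right, by the defining property recalled just before the statement (and since $f,g\geq 0$), the sets $\{f^\star>s\}=\{f>s\}^\star$ and $\{g^\star>t\}=\{g>t\}^\star$ are open intervals centered at zero, hence automatically nested; their intersection is therefore the shorter of the two, giving $|\{f^\star>s\}\cap\{g^\star>t\}|=\min(|\{f^\star>s\}|,|\{g^\star>t\}|)$. Finally the measure-preservation identity $|\{f^\star>s\}|=|\{f>s\}|$ (and likewise for $g$) turns the right-hand minimum into $\min(|\{f>s\}|,|\{g>t\}|)$, which matches the upper bound obtained on the left. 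This chain yields exactly the desired pointwise inequality.

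The only genuinely delicate point is the observation that distinguishes the rearranged case: because the superlevel sets of $f^\star$ and $g^\star$ are intervals about the origin, two of them are nested and their intersection saturates the trivial $\min$ bound, whereas for the original $f,g$ the intersection may be strictly smaller. Everything else---the Tonelli interchange and the measure-preserving property of rearrangements---I would treat as routine, invoking the properties stated immediately before the theorem.
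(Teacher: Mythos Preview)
Your proof is correct and is the standard layer-cake argument for the Hardy--Littlewood inequality (essentially the proof given in Lieb--Loss). Note, however, that the paper does not supply its own proof of this theorem: it is stated in Section~\ref{sec:rearrangements} as a well-known preliminary result, with references to \cite{Burchard2009,LiebLoss2001}, so there is no paper proof to compare against.
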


We also recall the well--known P\'olya--Szeg\H{o} inequality.

\begin{theorem}\label{thm:PolyaSzego}
If $f\in W^{1,p}(\R)$ for some $p\in [1,\infty)$, then $f^\star\in W^{1,p}(\R)$ and
\[\int_\R|(f^\star)'|^p \d x\leq \int_\R|f'|^p \d x.\]
\end{theorem}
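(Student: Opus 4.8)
The plan is to prove the inequality through the coarea (Banach-indicatrix) formula combined with the elementary one-dimensional isoperimetric fact that a bounded superlevel set has at least two boundary points. First I would reduce to the case $f\geq 0$: since $f^\star=(|f|)^\star$ by definition and $|(|f|)'|=|f'|$ a.e.\ (because $(|f|)'=\operatorname{sgn}(f)f'$, while $f'=0$ a.e.\ on $\{f=0\}$), it suffices to treat nonnegative $f$. Such an $f$ is continuous and tends to $0$ at $\pm\infty$, so I introduce the distribution function $\mu(t)=|\{f>t\}|$, which is finite for $t>0$ since $f$ vanishes at infinity. By construction $\{f^\star>t\}$ is the interval $(-\mu(t)/2,\mu(t)/2)$, so on $(0,\infty)$ the profile $f^\star$ is exactly the inverse of $t\mapsto\mu(t)/2$.

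The heart of the argument is two applications of the coarea formula $\int_\R g\,|f'|\d x=\int_0^\infty\big(\sum_{f(x)=t}g(x)\big)\d t$. Choosing $g=|f'|^{p-1}$ on $\{f'\neq0\}$ gives
\[\int_\R |f'|^p\d x=\int_0^\infty\Big(\sum_{i} a_i(t)^{\,p-1}\Big)\d t,\]
where $a_i(t)=|f'(x_i)|$ runs over the $n(t)$ preimages of a regular value $t$; choosing $g=1/|f'|$ on $\{f'\neq0\}$ yields the companion identity $-\mu'(t)=\sum_i 1/a_i(t)$ for a.e.\ $t$. For the rearrangement, the level set is exactly the two symmetric points $\pm x_t$ with $x_t=\mu(t)/2$, and the inverse-function relation gives $|(f^\star)'(x_t)|=2/(-\mu'(t))$, so the same coarea identity reads
\[\int_\R |(f^\star)'|^p\d x=\int_0^\infty 2\Big(\tfrac{2}{-\mu'(t)}\Big)^{p-1}\d t=\int_0^\infty 2^{p}(-\mu'(t))^{1-p}\d t.\]
It then remains to compare the integrands levelwise. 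Since $\{f>t\}$ has finite positive measure for $t$ between $0$ and the essential supremum of $f$, its boundary contains at least two points, i.e.\ $n(t)\geq 2$ for a.e.\ such $t$. Hölder's inequality with exponents $p$ and $p/(p-1)$ applied to $n(t)=\sum_i a_i^{(p-1)/p}\,a_i^{-(p-1)/p}$ gives $n(t)\leq(\sum_i a_i^{p-1})^{1/p}(-\mu'(t))^{(p-1)/p}$, whence $\sum_i a_i^{p-1}\geq n(t)^p(-\mu'(t))^{1-p}\geq 2^p(-\mu'(t))^{1-p}$. Integrating this pointwise bound and comparing with the two displayed identities yields $\int_\R|f'|^p\d x\geq\int_\R|(f^\star)'|^p\d x$; together with $\|f^\star\|_p=\|f\|_p$ from Proposition~\ref{prop:integrals}, this also shows $f^\star\in W^{1,p}(\R)$.

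The main obstacle is not the levelwise estimate, which is elementary, but the rigorous justification of the coarea formula, the absolute continuity of $\mu$, and the identity $-\mu'(t)=\sum_i 1/a_i(t)$ for a \emph{merely Sobolev} $f$, whose derivative need not be continuous and to which classical Sard's theorem does not directly apply. To handle this cleanly I would argue by approximation: take nonnegative smooth $f_k\to f$ in $W^{1,p}(\R)$ (e.g.\ truncated mollifications), for which Sard's theorem makes a.e.\ $t$ a regular value and renders all identities above transparent, so the inequality holds for each $f_k$. Passing to the limit relies on two standard facts: the rearrangement is nonexpansive on $L^p$, so $f_k^\star\to f^\star$ in $L^p$; and $\{(f_k^\star)'\}$ is bounded in $L^p$ by $\{f_k'\}$, so along a subsequence $(f_k^\star)'\wto (f^\star)'$ and weak lower semicontinuity of the $L^p$-norm gives $\int_\R|(f^\star)'|^p\d x\leq\liminf_k\int_\R|f_k'|^p\d x=\int_\R|f'|^p\d x$ (for $p=1$ one replaces weak compactness by lower semicontinuity of the total variation and notes that $f^\star$, being monotone on each half-line, actually lies in $W^{1,1}$). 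As this is a classical result, in the paper I would either cite \cite{LiebLoss2001,Burchard2009} directly or record the sketch above.
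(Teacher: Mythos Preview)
The paper does not actually prove this theorem: it is recorded in Section~\ref{sec:rearrangements} as a classical fact about symmetric decreasing rearrangements, with a reference to \cite{Burchard2009,LiebLoss2001} and no argument given. Your closing remark anticipates exactly this, and indeed citing those sources is all the paper does.

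As for the sketch itself, it is a correct outline of one of the standard proofs. The reduction to $f\ge 0$, the two coarea identities (one producing $\sum_i a_i(t)^{p-1}$ for $f$, the other producing $-\mu'(t)=\sum_i 1/a_i(t)$), the observation that the rearranged function has exactly two preimages with slope $2/(-\mu'(t))$, and the H\"older step $n(t)\le\big(\sum_i a_i^{p-1}\big)^{1/p}(-\mu'(t))^{(p-1)/p}$ combined with $n(t)\ge 2$ are all right. You are also right that the only genuine technical cost is justifying the coarea/Sard machinery for Sobolev $f$, and the approximation argument you describe---smooth approximants, nonexpansiveness of the rearrangement on $L^p$, boundedness of $(f_k^\star)'$ in $L^p$, and weak lower semicontinuity---is the standard way to close that gap. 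So there is nothing to correct; your proposal simply supplies a proof where the paper elects to quote one.
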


We finally state the version in \cite{Albert_Bhattarai} of the key result due to \cite{Garrisi}, which essentially says that P\'olya--Szeg\H{o} inequality is strict for functions that decompose in the sum of disjoint bubbles. 

\begin{theorem}\label{thm:key}
    Let $f,g\in \boC_0^\infty(\R)$ be even, nonnegative and radially nonincreasing. Let $x_1,x_2\in\R$ be such that $f(\cdot+x_1)$ and $g(\cdot+x_2)$ have disjoint supports, and let $h=f(\cdot+x_1)+g(\cdot+x_2)$. Then,
    \[\|(h^\star)'\|^2_2\leq \|f'\|^2_2 +\|g'\|^2_2 - \frac34\min\{\|f'\|^2_2,\|g'\|^2_2\}.\]
\end{theorem}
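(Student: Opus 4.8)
The plan is to run a sharp one-dimensional layer-cake (co-area) analysis of the three gradient integrals, using the disjoint-support hypothesis to make the distribution functions additive, and then to extract the factor $\tfrac34$ from an elementary pointwise inequality. Write $A=\max f=f(0)$ and $B=\max g=g(0)$ and, after relabeling, assume $A\le B$. Since $f,g$ are even, radially nonincreasing and smooth, for a.e.\ level $t\in(0,A)$ the set $\{f>t\}$ is a bounded interval $(-r_f(t),r_f(t))$, so $\mu_f(t):=|\{f>t\}|=2r_f(t)$ and, at the regular preimage, $|\mu_f'(t)|=2/\phi_f(t)$ where $\phi_f(t):=|f'(r_f(t))|$; the same holds for $g$. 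The co-area formula then gives $\|f'\|_2^2=\int_0^A 2\phi_f(t)\d t$ and $\|g'\|_2^2=\int_0^B 2\phi_g(t)\d t$, since by evenness each of the two level points carries slope $\phi_f$ (resp.\ $\phi_g$). Regular values are available for a.e.\ $t$ by Sard's theorem, and the at-most-countably-many plateau heights, where $f'=0$ on a set of positive measure, carry no $t$-measure.

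First I would exploit disjointness. Because $f(\cdot+x_1)$ and $g(\cdot+x_2)$ have disjoint supports, $\{h>t\}$ is the disjoint union of the two translated superlevel sets, whence $\mu_h=\mu_f+\mu_g$; for $t\in(0,A)$ the level set $\{h=t\}$ has $4$ points, while for $t\in(A,B)$ it has $2$. The rearrangement $h^\star$ is even and nonincreasing with $\mu_{h^\star}=\mu_h$, so each of its level sets has exactly $2$ points, and the same computation as above yields $\|(h^\star)'\|_2^2=\int_0^{B}4/|\mu_h'(t)|\d t$. On the overlap region $t\in(0,A)$ one has $|\mu_h'|=|\mu_f'|+|\mu_g'|=2/\phi_f+2/\phi_g$, so the integrand collapses to the harmonic-mean expression $4/|\mu_h'|=2\phi_f\phi_g/(\phi_f+\phi_g)$, whereas on $t\in(A,B)$ only $g$ survives and the integrand equals $2\phi_g$, matching exactly the corresponding slab of $\|g'\|_2^2$. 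Subtracting, the contributions over $(A,B)$ cancel and
\[
\|f'\|_2^2+\|g'\|_2^2-\|(h^\star)'\|_2^2=\int_0^{A}\Big(2(\phi_f+\phi_g)-\frac{2\phi_f\phi_g}{\phi_f+\phi_g}\Big)\d t=\int_0^{A}\frac{2(\phi_f^2+\phi_f\phi_g+\phi_g^2)}{\phi_f+\phi_g}\d t.
\]

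The factor $\tfrac34$ then comes from the pointwise algebraic inequality $a^2+ab+b^2\ge \tfrac34(a+b)^2$, valid for all $a,b\ge0$ since it rearranges to $\tfrac14(a-b)^2\ge0$. Applying it with $a=\phi_f$, $b=\phi_g$ gives the pointwise bound $\frac{2(\phi_f^2+\phi_f\phi_g+\phi_g^2)}{\phi_f+\phi_g}\ge \tfrac34\cdot 2(\phi_f+\phi_g)\ge \tfrac32\phi_f$, whence, integrating over $(0,A)$ and using $\|f'\|_2^2=\int_0^A2\phi_f\d t$ (recall $A$ is the smaller peak),
\[
\|f'\|_2^2+\|g'\|_2^2-\|(h^\star)'\|_2^2\ge \frac34\int_0^A 2\phi_f\d t=\frac34\|f'\|_2^2\ge \frac34\min\{\|f'\|_2^2,\|g'\|_2^2\},
\]
which is exactly the claimed inequality, the last step being trivial.

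The main obstacle is purely measure-theoretic: justifying the identities $|\mu_f'|=2/\phi_f$ and the representation $\|(h^\star)'\|_2^2=\int_0^B 4/|\mu_h'|\d t$ rigorously when the radially nonincreasing profiles have flat portions or critical values, so that the level sets fail to be honestly finite for every $t$. I would handle this by invoking Sard's theorem to work on the full-measure set of regular values, by discarding the at-most-countable set of plateau heights (on which $f'=0$ a.e.\ and which carry no $t$-measure), and, if convenient, by first proving the inequality for strictly radially decreasing smooth approximations and passing to the limit via the lower semicontinuity of $w\mapsto\|w'\|_2^2$ together with the continuity of rearrangement under such approximations. All remaining steps are elementary algebra and the co-area formula.
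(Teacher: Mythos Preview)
The paper does not prove this theorem; it is stated in Section~\ref{sec:rearrangements} as a preliminary result quoted from \cite{Garrisi} (in the formulation of \cite{Albert_Bhattarai}), so there is no in-paper proof to compare against.

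Your argument is correct and is, in fact, the same co-area/layer-cake strategy used in those references. The key steps are exactly the ones you isolate: (i) disjointness of supports makes the distribution functions additive, $\mu_h=\mu_f+\mu_g$; (ii) via the one-dimensional co-area formula the Dirichlet integral of each profile becomes $\int 2\phi\,\d t$, and that of $h^\star$ becomes $\int 4/|\mu_h'|\,\d t$, so on the overlap region the integrand is the harmonic-mean expression $2\phi_f\phi_g/(\phi_f+\phi_g)$; (iii) the constant $\tfrac34$ drops out of the elementary inequality $a^2+ab+b^2\ge\tfrac34(a+b)^2$. Your final chain $\tfrac34\|f'\|_2^2\ge\tfrac34\min\{\|f'\|_2^2,\|g'\|_2^2\}$ correctly absorbs the harmless asymmetry coming from ordering by peak height rather than by Dirichlet energy. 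The measure-theoretic caveats you raise (Sard, countably many plateau heights, approximation by strictly decreasing profiles) are the standard ones and are handled exactly as you describe for $\boC_0^\infty$ data.
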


\section{Properties of the minimizing surface}\label{sec:properties}

The following key lemma assures that functions conveniently symmetrized have smaller energy.

\begin{lemma}\label{lemma:Adecreases}
    Let $\alpha>0$, $\beta\geq 0$, $q>0$, $m\geq 0$, and $(u,v)\in \boX_{q,m}$, with $u=\rho e^{i\theta}$. Then, there exists $\gamma\in (0,1]$ such that, fixing any $x_0\in\R$ and defining
    \[\tilde u=\tilde\rho e^{i\tilde\theta},\quad\text{where}\quad 1-\tilde\rho=(1-\rho)^\star,\quad \tilde\theta=\gamma\int_{x_0}^x (\theta')^\star(y) \d y,\]
    it follows that $(\tilde u,v^\star)\in \boX_{q,m}$ and $E(\tilde u,v^\star)\leq E(u,v)$.
\end{lemma}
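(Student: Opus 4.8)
The plan is to track each term of the energy $E(u,v)$ under the symmetrization $u \mapsto \tilde u$, $v \mapsto v^\star$, and choose the scalar $\gamma$ precisely so that the momentum constraint $p(\tilde u) = q$ is restored after rearrangement. Write $E(u,v)$ in the lifting form
\[
E(u,v)=\int_\R\Big(\tfrac12(\rho')^2+\tfrac14(1-\rho^2)^2+\tfrac12\rho^2(\theta')^2+\tfrac12(v')^2+\tfrac\beta4 v^4-\tfrac\alpha2(1-\rho^2)v^2\Big)\d x.
\]
First I would handle the terms that do not involve $\theta'$. Since $(u,v)\in\boX$ forces $|1-\rho|\in(0,1)$, the function $1-\rho$ vanishes at infinity and $\|1-\tilde\rho\|_p=\|(1-\rho)^\star\|_p=\|1-\rho\|_p$ by Proposition~\ref{prop:integrals}; in particular $\tilde u\in\NE$ and $|\tilde u|<2$. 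The gradient term $\int(\rho')^2=\int((1-\rho)')^2$ does not increase by the P\'olya--Szeg\H{o} inequality (Theorem~\ref{thm:PolyaSzego}) applied to $1-\rho$, and similarly $\int(v')^2$ does not increase and $\|v\|_p=\|v^\star\|_p$. For the potential term $\int\tfrac14(1-\rho^2)^2$: writing $1-\rho^2=(1-\rho)(1+\rho)=(1-\rho)(2-(1-\rho))=G(1-\rho)$ when $\rho<1$, and in general $1-\rho^2 = G(1-\rho)\,\mathrm{sign}(1-\rho) + \text{correction}$, I would instead argue directly that $(1-\rho^2)^2 = H(|1-\rho|)$ for a nondecreasing $H$ on $[0,1]$ (namely $H(s)=(s(2-s))^2$ for the relevant branch, using $|1-\rho|<1$), so Proposition~\ref{prop:integrals}/\ref{prop:compositions} gives equality of this integral after rearrangement. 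The coupling term $-\tfrac\alpha2\int(1-\rho^2)v^2$ is the delicate sign: since $|1-\rho|<1$ we have $1-\rho^2 = 1-(1-(1-\rho))^2 = 2(1-\rho)-(1-\rho)^2$, which is nonnegative-increasing in $1-\rho$ precisely when $1-\rho\le 1$, i.e. always here; thus $\int(1-\rho^2)v^2$ is the Hardy--Littlewood pairing of two nonnegative functions and, since rearranging $1-\rho$ and $v$ matches $(1-\rho^2)$ with $(v^2)^\star = (v^\star)^2$, Theorem~\ref{thm:HardyLittlewood} gives $\int(1-\rho^2)v^2 \le \int(1-\tilde\rho^2)(v^\star)^2$, so $-\tfrac\alpha2\int(1-\rho^2)v^2$ does \emph{not} increase. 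This is the step where the particular form of the momentum $p$ via $G$ pays off.

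Next comes the kinetic-phase term $\tfrac12\int\rho^2(\theta')^2$ and the restoration of the constraint. I would first observe that the momentum $p(u)=\tfrac12\int G(|1-\rho|)\theta'\,\d x$ can be rewritten, via \eqref{eq:recastmomentum} and since $\rho<1$ here (so $G(|1-\rho|)=1-\rho^2\ge 0$), as the Hardy--Littlewood pairing $\tfrac12\int (1-\rho^2)\theta'$; but $\theta'$ need not be nonnegative. The key point is that replacing $\theta'$ by $(\theta')^\star$ (the symmetric decreasing rearrangement of $\theta'$, which is $\ge 0$) can only increase this pairing, while by P\'olya--Szeg\H{o} $\int((\theta')^\star)^2 \le \int(\theta')^2$. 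So define an auxiliary profile with phase derivative $(\theta')^\star$ and modulus $\tilde\rho$; then
\[
p\big(\tilde\rho e^{i\int_{x_0}^\cdot(\theta')^\star}\big)=\tfrac12\int(1-\tilde\rho^2)(\theta')^\star\,\d x \ge \tfrac12\int(1-\rho^2)\theta'\,\d x = q,
\]
using Hardy--Littlewood with $1-\rho^2$ and $\theta'$ (matching the rearrangement of $1-\rho$ with that of $\theta'$). Since scaling the phase derivative by $\gamma\in(0,1]$ scales $p$ continuously from this value (at $\gamma=1$, $\ge q$) down to $0$ (at $\gamma=0$), by the intermediate value theorem there is a unique $\gamma\in(0,1]$ with $p(\tilde u)=q$ for $\tilde\theta = \gamma\int_{x_0}^\cdot(\theta')^\star$. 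With this $\gamma$, the phase-kinetic term satisfies
\[
\tfrac12\int\tilde\rho^2(\tilde\theta')^2 = \tfrac{\gamma^2}{2}\int\tilde\rho^2((\theta')^\star)^2 \le \tfrac12\int\tilde\rho^2((\theta')^\star)^2 \le \tfrac12\int\rho^2(\theta')^2,
\]
where the last inequality uses Hardy--Littlewood on the product $\rho^2\cdot(\theta')^2$ together with $(\rho^2)^\star = (\tilde\rho)^2$ and $((\theta')^2)^\star = ((\theta')^\star)^2$ (here I use $\rho<1\Rightarrow\rho^2<1$ so $\rho^2$ vanishes at infinity after subtracting $1$; more precisely apply Hardy--Littlewood to $1-\rho^2\ge 0$ and $(\theta')^2$, then expand). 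Assembling the five term-by-term estimates yields $E(\tilde u,v^\star)\le E(u,v)$, and $(\tilde u,v^\star)\in\boX_{q,m}$ since $\|v^\star\|_2^2=m$ and $p(\tilde u)=q$ by construction.

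The main obstacle I anticipate is twofold. First, the genuinely non-routine point is the coupling term: one must verify that, with the chosen rearrangements, $1-\rho^2$ and $v^2$ are rearranged \emph{consistently} (both by the same symmetric-decreasing operation, since $1-\rho^2$ is an increasing function of $|1-\rho|$ on the relevant range and $v^2$ of $|v|$), so that Hardy--Littlewood applies with the correct sign to make the $-\tfrac\alpha2$ term decrease — this is exactly why the redefined momentum $p$ (rather than the classical $Q$) and the constraint $|u|<2$ are imposed. Second, several terms require knowing that $1-\rho^2$, $\rho^2(\theta')^2$, etc., genuinely vanish at infinity and that the integrands are of the form "nondecreasing minus nondecreasing" with one part integrable, so that Propositions~\ref{prop:integrals} and~\ref{prop:compositions} legitimately apply; checking these integrability/monotonicity hypotheses carefully (using $1-|u|^2\in L^2$, $u'\in L^2$, and $|1-\rho|<1$ pointwise) is where the bookkeeping lies, but it is routine once the structure above is in place.
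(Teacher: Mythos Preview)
Your overall strategy matches the paper's proof: symmetrize $1-\rho$, $\theta'$, and $v$ separately, show each term of the energy does not increase, and rescale the phase derivative by $\gamma\in(0,1]$ to restore the momentum constraint. The decomposition $\int\rho^2(\theta')^2 = \int(\theta')^2 - \int(1-\rho^2)(\theta')^2$ is exactly what the paper uses for the phase-kinetic term.

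There is, however, a genuine gap. You repeatedly assume $\rho<1$ (equivalently $1-\rho^2\ge 0$), sometimes explicitly (``since $\rho<1$ here'') and sometimes implicitly. But membership in $\boX_{q,m}$ only gives $0<\rho<2$, so $\rho$ may exceed $1$ on a set of positive measure, and this breaks several of your claims. Your identity $(1-\rho^2)^2=H(|1-\rho|)$ with $H(s)=(s(2-s))^2$ is false for $\rho>1$: at $\rho=3/2$ one gets $H(1/2)=9/16$ but $(1-\rho^2)^2=25/16$. Your coupling-term argument treats $\int(1-\rho^2)v^2$ as a Hardy--Littlewood pairing of nonnegative functions, but $1-\rho^2$ can be negative. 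And for both the momentum and the phase-kinetic term you use $G(|1-\rho|)=1-\rho^2$, which fails for $\rho>1$ by \eqref{eq:recastmomentum}. The fix, which is the one non-obvious ingredient in the paper's proof, is the pair of pointwise inequalities valid for all $\rho\in(0,2)$:
\[
G(|1-\rho|)\ \ge\ 1-\rho^2,\qquad G(|1-\rho|)^2\ \le\ (1-\rho^2)^2,
\]
both with equality iff $\rho\le 1$ (the first is immediate from \eqref{eq:recastmomentum}; the second from $(3-\rho)^2\le(1+\rho)^2$ for $\rho\ge 1$). With these, Hardy--Littlewood plus Proposition~\ref{prop:compositions} give $\int(1-\tilde\rho^2)(v^\star)^2\ge\int G(|1-\rho|)v^2\ge\int(1-\rho^2)v^2$ and likewise with $(\theta')^2$, while equimeasurability gives $\int(1-\tilde\rho^2)^2=\int G(|1-\rho|)^2\le\int(1-\rho^2)^2$. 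After that your argument goes through.

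One further slip: $\int((\theta')^\star)^2\le\int(\theta')^2$ is not P\'olya--Szeg\H{o} (which concerns gradients) but equimeasurability (Proposition~\ref{prop:integrals}), and it is an \emph{equality}. This matters, since in $\int\tilde\rho^2(\tilde\theta')^2=\int(\tilde\theta')^2-\int(1-\tilde\rho^2)(\tilde\theta')^2$ you need equality in the first piece to combine it with the lower bound on the second.
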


\begin{proof}
    We will start by proving that $\tilde u\in\NE$ and $|\tilde u|<2$ in $\R$. To do so, let us first take $x_0\in\R$ such that $|1-\rho(x_0)|=\|1-\rho\|_\infty$. From Proposition~\ref{prop:integrals}, it follows 
    \[1-\tilde\rho\leq\|1-\tilde\rho\|_\infty=\|(1-\rho)^\star\|_\infty=\|1-\rho\|_\infty=|1-\rho(x_0)|.\]
    Due to the fact that $0<\rho<2$ in $\R$, it is easy to see that $|1-\rho(x_0)|<1$ and, in turn, $\tilde\rho>0$ in $\R$. Besides, since $(1-\rho)^\star\geq 0$, it follows that $\tilde\rho\leq 1<2$ in $\R$. 
    
    Next, by definition of $G$ in \eqref{eq:momentum}, we get $1-\tilde\rho^2=G(1-\tilde\rho)=G((1-\rho)^\star).$ Hence, using that $G$ is increasing in the interval $[0,1]$, and thus so is $G^2$, Proposition~\ref{prop:integrals} yields
    \begin{align*}
        \int_\R(1-\tilde\rho^2)^2\d x&=\int_\R G((1-\rho)^\star)^2 \d x=\int_\R G(|1-\rho|)^2 \d x
        \\
        &=\int_\R\Big(4(1-\rho)^2-4|1-\rho|^3+(1-\rho)^4\Big)\d x
        \\
        &\leq \int_\R\Big(4(1-\rho)^2-4(1-\rho)^3+(1-\rho)^4\Big)\d x=\int_\R (1-\rho^2)^2\d x.
    \end{align*}
    In particular, {$1-\tilde\rho^2\in L^2(\R)$}. In addition, Theorem~\ref{thm:PolyaSzego} implies
    \[\int_\R(\tilde\rho')^2\d x=\int_\R(((1-\rho)^\star)')^2\d x\leq \int_\R(\rho')^2 \d x.\]
    That is, $\tilde\rho'\in L^2(\R)$. Since $\tilde\theta'\in L^2(\R)$ too, it follows that $\tilde u\in\NE$.

    On the other hand, by Proposition~\ref{prop:compositions} and Theorem~\ref{thm:HardyLittlewood}, we deduce
    \begin{equation*}
        \int_\R G(1-\tilde\rho)(\theta')^\star \d x=\int_\R G(|1-\rho|)^\star(\theta')^\star \d x\geq \int_\R G(|1-\rho|)|\theta'| \d x\geq 2 p(u)=2q>0.
    \end{equation*}
    Therefore, the real number
    \[\gamma=2q\Big(\int_\R  G(|1-\rho|)^\star (\theta')^\star \d x\Big)^{-1},\]
    is well--defined, $\gamma\in (0,1]$ and $p(\tilde u)=q$.
    Furthermore, Proposition~\ref{prop:integrals} assures that $v^\star\in L^2(\R)$ with $\|v^\star\|_2^2=m$. Hence, $(\tilde u,v^\star)\in\boX_{q,m}$. 
    
    Let us now estimate the remaining terms in $E(\tilde u,v^\star)$. Thus, from Proposition~\ref{prop:integrals}, it follows 
    \[\int_\R(\tilde\theta')^2\d x=\gamma^2\int_\R ((\theta')^\star)^2\d x=\gamma^2\int_\R (\theta')^2\d x.\]
    In addition, by Proposition~\ref{prop:compositions} and Theorem~\ref{thm:HardyLittlewood}, we obtain
    \begin{align*}
        \int_\R(1-\tilde\rho^2)(\tilde\theta')^2\d x&=\gamma^2\int_\R  G(|1-\rho|)^\star((\theta')^2)^\star \d x\geq \gamma^2\int_\R  G(|1-\rho|)(\theta')^2\d x
        \\
        &=\gamma^2\int_\R|1-\rho|(2-|1-\rho|)(\theta')^2\d x
        \\
        &\geq\gamma^2\int_\R(2(1-\rho))-(1-\rho)^2)(\theta')^2\d x=\gamma^2\int_\R(1-\rho^2)(\theta')^2\d x.
    \end{align*}
    Therefore, recalling that $\gamma\in(0,1]$, we derive
    \begin{equation*}
        \int_\R\tilde\rho^2(\tilde\theta')^2\d x=\int_\R(\tilde\theta')^2\d x - \int_\R(1-\tilde\rho^2)(\tilde\theta')^2\d x\leq \gamma^2\int_\R\rho^2(\theta')^2\d x\leq\int_\R\rho^2(\theta')^2\d x.
    \end{equation*}
    Arguing as above, we also derive
    \[\int_\R(1-\tilde\rho^2)(v^\star)^2\d x\geq \int_\R(1-\rho^2)v^2\d x,\quad \int_\R ((v^\star)')^2\d x\leq\int_\R (v')^2\d x,\quad \int_\R (v^\star)^4\d x=\int_\R v^4\d x.\]
    We conclude that $E(\tilde u,v^\star)\leq E(u,v)$.
\end{proof}

Let us introduce the set
\[\boX^0_{q,m}=\left\{(\rho e^{i\theta},v)\in\boX_{q,m}\left|\begin{array}{l} 1-\rho,\theta',v \text{ are even, nonnegative, radially nonincreasing}\\ \text{functions in } \boC_0^\infty(\R),\text{ with } p(\rho e^{i\theta})=q,\, \|v\|^2_2=m\end{array}\right.\right\}.\]
Next lemma shows that, even if $\boX_{q,m}^0$ is contained in $\boX_{q,m}$, the infima of the energy in both sets coincide. Thus, one is allowed to take minimizing sequences of compactly supported, symmetric functions. We stress that one cannot expect the infimum in $\boX_{q,m}^0$ to be attained since solutions in $\boX_{q,m}$ cannot have compact support (see the proof of Theorem~\ref{thm:mainresult}).

\begin{lemma}\label{lemma:sameinf}
    For every $\alpha>0$, $\beta\geq 0$, $q>0$, $m\geq 0$, the following identity holds: \[\Emin(q,m)=\inf\{E(u,v):\, (u,v)\in \boX^0_{q,m}\}.\]
\end{lemma}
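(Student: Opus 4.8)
The plan is to prove the nontrivial inequality $\Emin(q,m)\geq\inf\{E(u,v):(u,v)\in\boX^0_{q,m}\}$; the reverse is immediate from $\boX^0_{q,m}\subset\boX_{q,m}$. So fix $(u,v)\in\boX_{q,m}$ with $u=\rho e^{i\theta}$ and $\ve>0$; I want to produce a pair in $\boX^0_{q,m}$ whose energy exceeds $E(u,v)$ by at most $O(\ve)$. The strategy is: first symmetrize via Lemma~\ref{lemma:Adecreases} to reduce to the case where $1-\rho$, $\theta'$ and $v$ are already even, nonnegative and radially nonincreasing (replacing $(u,v)$ by $(\tilde u,v^\star)$ only decreases the energy while keeping the pair in $\boX_{q,m}$); then approximate this symmetrized pair by compactly supported smooth functions with the same structure, controlling the effect on the two constraints $p(\cdot)=q$, $\|\cdot\|_2^2=m$ and on the energy.

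For the approximation step I would work with the three scalar data $w_1:=1-\rho\in[0,1)$, $w_2:=\theta'\in L^2(\R)$ and $w_3:=v\in H^1(\R)$, all even, nonnegative, radially nonincreasing. The standard device is truncation plus mollification: truncate outside $[-R,R]$ and mollify at scale $\delta$, choosing $R$ large and $\delta$ small. One must preserve monotonicity and nonnegativity — truncating a radially nonincreasing function by multiplying by a fixed even cutoff that is $1$ on $[-R,R]$ and radially nonincreasing keeps it radially nonincreasing, as does convolution with an even nonnegative bump, so $\boC_0^\infty$ with the required symmetry is reachable. For $w_3=v$ this directly gives $v_n\to v$ in $H^1$, hence $\|v_n\|_2^2\to m$ and $\int v_n^4\to\int v^4$, $\int(v_n')^2\to\int(v')^2$. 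For the dark component: from $w_1^{(n)}\to w_1$ in $L^2$ (and boundedly in $L^\infty$) and $w_2^{(n)}\to w_2$ in $L^2$ one gets $\int(\rho_n')^2\to\int(\rho')^2$, $\int(1-\rho_n^2)^2\to\int(1-\rho^2)^2$, $\int\rho_n^2(w_2^{(n)})^2\to\int\rho^2(\theta')^2$, and $\int(1-\rho_n^2)v_n^2\to\int(1-\rho^2)v^2$; so $E$ of the approximants converges to $E(u,v)$. One also needs $p(\rho_n e^{i\theta_n})\to q$, which follows from $G(w_1^{(n)})\to G(w_1)$ in $L^2$ and $w_2^{(n)}\to w_2$ in $L^2$, using the formula in \eqref{eq:momentum}.

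The remaining point — and this is the main obstacle — is that the approximants will generically satisfy $p(\rho_n e^{i\theta_n})=q_n\neq q$ and $\|v_n\|_2^2=m_n\neq m$, so they lie in $\boX^0_{q_n,m_n}$, not $\boX^0_{q,m}$. I would fix this with two small corrections that stay inside the class. First, rescale the phase: replace $\theta_n'$ by $\gamma_n\theta_n'$ with $\gamma_n=q/q_n\to1$; this is still even, nonnegative, radially nonincreasing, lies in $\boC_0^\infty$, changes $p$ to exactly $q$ (since $p$ is linear in $\theta'$), and perturbs the energy terms $\int\rho_n^2(\theta_n')^2$, $\int(1-\rho_n^2)(\theta_n')^2$ by a factor $\gamma_n^2\to1$, hence negligibly. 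Second, fix the mass: multiply $v_n$ by $\mu_n=\sqrt{m/m_n}\to1$; again this preserves the structure, restores $\|v_n\|_2^2=m$ exactly, and changes $\int(v_n')^2$, $\int v_n^4$, $\int(1-\rho_n^2)v_n^2$ only by bounded powers of $\mu_n\to1$. (A minor point: one should arrange $q_n>0$ and $m_n>0$ along the sequence, which holds once $n$ is large since $q_n\to q>0$, $m_n\to m$; if $m=0$ the statement for the $v$-component is vacuous and $v_n\equiv0$ works, while if $q>0$ the mollified $\theta_n'$ is not identically zero for $n$ large.) After these two rescalings, $(\rho_n e^{i\theta_n},v_n)\in\boX^0_{q,m}$ and $E(\rho_n e^{i\theta_n},v_n)\to E(\tilde u,v^\star)\leq E(u,v)$, so for $n$ large the energy is within $\ve$ of $E(u,v)$. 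Taking the infimum over $(u,v)\in\boX_{q,m}$ and letting $\ve\to0$ gives the claimed identity.
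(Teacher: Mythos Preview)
Your proposal is correct and follows essentially the same approach as the paper: symmetrize via Lemma~\ref{lemma:Adecreases}, approximate the three scalar data $1-\rho$, $\theta'$, $v$ by $\boC_0^\infty$ functions that retain the even, nonnegative, radially nonincreasing structure, and then rescale $\theta'$ by $q/q_n$ and $v$ by $\sqrt{m/m_n}$ to restore the exact constraints. The only difference is the order of operations: you symmetrize first and then truncate--mollify, whereas the paper first passes to $\boC_0^\infty$ by density, then symmetrizes (so the rearranged functions are automatically compactly supported), and finally mollifies to recover smoothness; both routes lead to the same conclusion with comparable effort.
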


\begin{proof}
    Let $\varepsilon>0$, $q>0$ and $m\geq 0$, and let $(u,v)\in \boX_{q,m}$ be such that
    \[E(u,v)<\Emin(q,m)+\varepsilon.\]
    By Lemma~\ref{lemma:Adecreases}, $u$ can be taken such that $|u|\leq 1$. Since $\boC_0^\infty(\R)$ is dense in $H^1(\R)$ and in $L^2(\R)$, there exist $1-\rho_1,\zeta_1,v_1\in \boC_0^\infty(\R)$ such that $\rho_1>0$ and, if $\theta_1$ is any primitive of $\zeta_1$ and if $u_1=\rho_1 e^{i\theta_1}$, then
    \[E(u_1,v_1)<\Emin(q,m)+o_\varepsilon(1),\quad p(u_1)=q + o_\varepsilon(1),\quad \|v_1\|^2_2=m+o_\varepsilon(1),\]
    where  $o_\varepsilon(1)$ denotes any constant such that $\lim_{\varepsilon\to 0}o_\varepsilon(1)=0$. Next, using the notation of Lemma~\ref{lemma:Adecreases}, we have $|\tilde u_1|>0$ and
    \[E(\tilde u_1,v_1^\star)<\Emin(q,m)+o_\varepsilon(1),\quad p(\tilde u_1)=q+o_\varepsilon(1),\quad \|v_1^\star\|^2_2=m+o_\varepsilon(1).\]
    Observe that $1-|\tilde u_1|$, $\tilde\theta'_1$ and $v_1^\star$ are even, nonnegative, radially nonincreasing, and have compact support. However, they need not be smooth. Let us thus define
    \[1-\rho_2=\phi_\varepsilon*(1-|\tilde u_1|),\quad \zeta_2=\phi_\varepsilon*\tilde\theta_1',\quad u_2=\rho_2 e^{i\theta_2},\quad v_2=\phi_\varepsilon*v_1^\star,\]
    where $\theta_2$ is a primitive of $\zeta_2$,  $*$ denotes the usual convolution, and $\phi_\varepsilon \in\boC^\infty_0(\R)$ is an even and radially decreasing mollifier that we choose in such a way that
    \[E(u_2,v_2)<\Emin(q,m)+o_\varepsilon(1),\quad p(u_2):=q_\varepsilon=q+o_\varepsilon(1),\quad \|v_2\|^2_2:=m_\varepsilon=m+o_\varepsilon(1).\]
    Finally, let us take
     \[\rho_3(x)=\rho_2(x),\quad \zeta_3(x)=\frac{q}{q_\varepsilon}
     \zeta_2(x),\quad u_3=\rho_3 e^{i\theta_3},\quad v_3(x)=\sqrt{\frac{m}{m_\varepsilon}} v_2(x),\]
     where $\theta_3$ is a primitive of $\zeta_3$. It is immediate to check that $(u_3,v_3)\in \boX_{q,m}^0$ and
     \begin{align*}
         E(u_3,v_3)&=\frac12\int_\R(\rho_2')^2\d x + \frac14\int_\R (1-\rho_2^2)^2\d x +\frac{q^2}{2q_\varepsilon^2}\int_\R\rho_2^2(\theta_2')^2\d x+ \frac{m}{2m_\varepsilon}\int_\R(v_2')^2\d x
         \\
         &+\frac{\beta m^2}{4m_\varepsilon^2}\int_\R v_2^4 \d x -\frac{\alpha m}{2m_\varepsilon}\int_\R(1-\rho_2^2)v_2^2\d x=E(u_2,v_2)+o_\varepsilon(1).
     \end{align*}
    Therefore, $E(u_3,v_3)<\Emin(q,m)+o_\varepsilon(1)$. This proves the result. 
\end{proof}

In what follows, we investigate the properties of the minimizing surface. The result below establishes that it is bounded from above by $\Emin(q,0)$, being such a bound strict provided that the mass is small.

\begin{lemma}\label{lemma:smallE}
    Let $\alpha>0$, $\beta\geq 0$, $q\in(0,\pi/2)$, and $m\geq 0$. Then, 
    \begin{equation}\label{eq:eminineq}
        \Emin(q,m)\leq \Emin(q,0)<\sqrt{2}q.
    \end{equation}
    Assuming in addition 
    \begin{equation}\label{eq:alphabeta}
        m\beta< 2\alpha\int_\R(1-|\mathbf{u}_q|^2)\d x, \quad m>0,
    \end{equation}
    where $\bu_q$ is defined in \eqref{eq:explicitsol}, it follows that
    \begin{equation}\label{eq:keyineq}
        \Emin(q,m)<\Emin(q,0).
    \end{equation}
\end{lemma}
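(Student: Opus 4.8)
The plan is to prove \eqref{eq:eminineq} first and then \eqref{eq:keyineq}. For \eqref{eq:eminineq}, the inequality $\Emin(q,0)<\sqrt{2}q$ is exactly \eqref{eq:energygray} from Theorem~\ref{thm:scalar}, so nothing needs to be done there. The inequality $\Emin(q,m)\leq \Emin(q,0)$ follows simply by using the semi-trivial pair $(\bu_q,0)$ as a competitor: however, $(\bu_q,0)\notin\boX_{q,m}$ when $m>0$ since $\|0\|_2^2=0\neq m$. So instead I would take $(\bu_q,v)$ for a suitable $v\in H^1(\R)$ with $\|v\|_2^2=m$ and show the energy can be made $\leq \Emin(q,0)+o_\delta(1)$; but actually it is cleaner to note that \eqref{eq:eminineq} is a special case of the (later) item \ref{item:decreasingm} of Theorem~\ref{thm:properties} applied with $q_1=q_2=q$, $m_1=0$, $m_2=m$ — but since we are inside Section~\ref{sec:properties} and this lemma is used to prove that theorem, I must argue directly. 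The direct argument: pick any nonnegative $\psi\in\boC_0^\infty(\R)$ with $\|\psi\|_2^2=1$ and set $v_\delta(x)=\sqrt{m}\,\delta^{1/2}\psi(\delta x)$, so $\|v_\delta\|_2^2=m$, $\|v_\delta'\|_2^2=O(\delta^2)$, $\|v_\delta\|_4^4=O(\delta)$, and $\int(1-|\bu_q|^2)v_\delta^2\geq 0$; then $E(\bu_q,v_\delta)=\Emin(q,0)+O(\delta)\to\Emin(q,0)$, which gives $\Emin(q,m)\leq\Emin(q,0)$.

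For the strict inequality \eqref{eq:keyineq} under \eqref{eq:alphabeta}, the idea is that the cross term $-\frac{\alpha}{2}\int(1-|\bu_q|^2)v^2$ is genuinely negative and, for small mass spread over a moderate length scale, dominates the positive corrections $\frac12\|v'\|_2^2$, $\frac{\beta}{4}\|v\|_4^4$ coming from the bright component. Concretely, I would take a bump $v_\delta$ concentrated (but not too concentrated) near the region where $1-|\bu_q|^2$ is bounded away from zero — say $v_\delta(x)=\sqrt{m}\,\delta^{1/2}\psi(\delta x)$ with $\psi$ as above but now let $\delta\to 0$ so the bump \emph{spreads out} rather than concentrates; then $\int(1-|\bu_q|^2)v_\delta^2\to \|\psi\|_\infty^2$-type limit is wrong because $1-|\bu_q|^2$ is integrable, so actually as $\delta\to 0$ one has $\delta\int(1-|\bu_q|^2)\psi(\delta x)^2\,dx \to 0$. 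Let me reconsider: the right scaling is to keep $v$ roughly of the form $v=\sqrt{\tfrac{m}{L}}\,\mathbf{1}$-like on an interval of length $L$ with $L$ chosen so that $1-|\bu_q|^2$ is comparable to its "average" there; better yet, use $v_\delta(x)=\sqrt{m}\,\delta^{1/2}\psi(\delta x)$ and compute
\[
E(\bu_q,v_\delta)=\Emin(q,0)+\tfrac12\|v_\delta'\|_2^2+\tfrac{\beta}{4}\|v_\delta\|_4^4-\tfrac{\alpha}{2}\int_\R(1-|\bu_q|^2)v_\delta^2\,dx.
\]
Since $\|v_\delta'\|_2^2=m\delta^2\|\psi'\|_2^2$ and $\|v_\delta\|_4^4=m^2\delta\|\psi\|_4^4$ both vanish as $\delta\to0$ while $\int_\R(1-|\bu_q|^2)v_\delta^2\,dx=m\delta\int_\R(1-|\bu_q|^2)\psi(\delta x)^2\,dx$ also vanishes — so this scaling only reproves $\leq$. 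To get strictness I instead fix $\delta$ at a suitable positive value: choose $v=\varepsilon\, w$ where $w$ is a fixed nonnegative $\boC_0^\infty$ function, rescaled so $\|v\|_2^2=m$, i.e. $v=\sqrt{m}\,w/\|w\|_2$; then $E(\bu_q,v)=\Emin(q,0)+\tfrac{m}{2\|w\|_2^2}\|w'\|_2^2+\tfrac{\beta m^2}{4\|w\|_2^4}\|w\|_4^4-\tfrac{\alpha m}{2\|w\|_2^2}\int(1-|\bu_q|^2)w^2$, and this is $<\Emin(q,0)$ precisely when $\tfrac12\|w'\|_2^2+\tfrac{\beta m}{4\|w\|_2^2}\|w\|_4^4<\tfrac{\alpha}{2}\int(1-|\bu_q|^2)w^2$. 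Taking $w=w_R$ to be a standard plateau function equal to $1$ on $[-R,R]$ with $\|w_R'\|_2^2=O(1)$, $\|w_R\|_2^2\sim 2R$, $\|w_R\|_4^4\sim 2R$, one gets the condition $O(1/R)+\tfrac{\beta m}{4}(1+o(1))<\tfrac{\alpha}{2}\int_\R(1-|\bu_q|^2)w_R^2$; letting $R\to\infty$, the right side tends to $\tfrac{\alpha}{2}\int_\R(1-|\bu_q|^2)$, so the inequality holds for $R$ large precisely under hypothesis \eqref{eq:alphabeta}, namely $\beta m<2\alpha\int_\R(1-|\bu_q|^2)$. This yields a competitor with energy strictly below $\Emin(q,0)$, hence \eqref{eq:keyineq}.

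The main obstacle is getting the scaling of the test function right: one needs $v$ to be \emph{spread out} (large support $[-R,R]$, height $O(1/\sqrt{R})$) so that the kinetic term $\|v'\|_2^2$ and the quartic term $\|v\|_4^4$ are small while the cross term $\int(1-|\bu_q|^2)v^2$ captures the full integral $\int_\R(1-|\bu_q|^2)$ of the (integrable) dark-soliton density defect; a concentrated bump would kill all three terms equally and only give the non-strict inequality. One must also check the admissibility $(\bu_q,v)\in\boX_{q,m}$: indeed $\bu_q\in\NE$ with $0<|\bu_q|<1<2$ and $p(\bu_q)=q$ by Theorem~\ref{thm:scalar}, and $v\in H^1(\R)$ with $\|v\|_2^2=m$ by construction, so $(\bu_q,v)\in\boX_{q,m}$. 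A minor technical point is that the plateau cutoff $w_R$ should be taken smooth with uniformly bounded derivative so that $\|w_R'\|_2^2$ stays bounded (indeed $O(1)$) as $R\to\infty$; this is routine.
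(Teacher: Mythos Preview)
Your argument for \eqref{eq:eminineq} is correct and is exactly the paper's: both use the spreading scaling $v_s(x)=\sqrt{s}\,v(sx)$ for a fixed $v$ with $\|v\|_2^2=m$ and let $s\to 0$.

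For \eqref{eq:keyineq}, however, you abandon this scaling too quickly. You note that $\tfrac12\|v_\delta'\|_2^2=O(\delta^2)$, $\tfrac{\beta}{4}\|v_\delta\|_4^4=O(\delta)$ and $\tfrac{\alpha}{2}\int(1-|\bu_q|^2)v_\delta^2=O(\delta)$ all vanish, and conclude that ``this scaling only reproves $\leq$''. The paper's point is precisely that the last two terms vanish at the \emph{same} rate $\delta$: after factoring out $\delta$, one has
\[
E(\bu_q,v_\delta)-\Emin(q,0)=\delta\Big(\tfrac{m\delta}{2}\|\psi'\|_2^2+\tfrac{\beta m^2}{4}\|\psi\|_4^4-\tfrac{\alpha m}{2}\int_\R(1-|\bu_q(x)|^2)\psi(\delta x)^2\,\d x\Big),
\]
and by dominated convergence $\int_\R(1-|\bu_q|^2)\psi(\delta\cdot)^2\to\psi(0)^2\int_\R(1-|\bu_q|^2)$. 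Choosing $\psi$ with $\|\psi\|_\infty=\psi(0)$ and using $\|\psi\|_4^4\le\psi(0)^2\|\psi\|_2^2=\psi(0)^2$, the bracket is negative for small $\delta$ precisely under \eqref{eq:alphabeta}. So your first scaling already gives the strict inequality; you just did not compare the coefficients of $\delta$.

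Your plateau alternative contains a genuine slip. You take $w_R$ with \emph{fixed} transition width, so $\|w_R'\|_2^2=O(1)$ as you state, then divide by $m/\|w_R\|_2^2$ and assert the kinetic contribution is $O(1/R)$. It is not: in the divided condition $\tfrac12\|w_R'\|_2^2+\tfrac{\beta m}{4}\|w_R\|_4^4/\|w_R\|_2^2<\tfrac{\alpha}{2}\int(1-|\bu_q|^2)w_R^2$, the first term is a fixed positive constant, and as written your argument would require a hypothesis strictly stronger than \eqref{eq:alphabeta}. The easy fix is to scale the transition with $R$, i.e.\ set $w_R=\psi(\cdot/R)$, giving $\|w_R'\|_2^2=O(1/R)$; but then $v=\sqrt{m}\,w_R/\|w_R\|_2$ is exactly the $v_\delta$ scaling with $\delta=1/R$ that you discarded.
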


\begin{proof}
     To prove the inequality $\Emin(q,m)\leq \Emin(q,0)$, let us fix $v\in H^1(\R)$ with $v> 0$ in $\R$, $\|v\|_\infty=v(0)$, and $\|v\|_2^2=m$. For every $s>0$, let us define the function 
    \[v_s(x)=\sqrt{s}v(sx),\]
    which satisfies
    \[\|v_s\|_2^2=m,\quad \|v_{s}'\|^2_2=s^2\|v'\|_2^2,\quad\|v_s\|_4^4=s\|v\|_4^4,\quad\text{for all }s>0.\]
    Thus, we deduce 
    \begin{align}
        \nonumber\Emin(q,m)&\leq E(\bu_q,v_s)=\Emin(q,0) + \int_\R\Big(\frac12(v_s')^2+\frac{\beta}{4}v_s^4 - \frac{\alpha}{2}(1-|\bu_q|^2)v_s^2\Big)\d x
        \\
        \label{proof:negative}
        &=\Emin(q,0)+s\Big(\frac{s}{2} \|v'\|_2^2 +\frac{\beta}{4}\|v\|_4^4 -\frac{\alpha}{2}\int_\R(1-|\bu_q(x)|^2) v(sx)^2 \d x\Big).
    \end{align}
    Taking limits as $s\to 0$, we prove that $\Emin(q,m)\leq\Emin(q,0)$. Thus, \eqref{eq:energygray} leads to \eqref{eq:eminineq}. 
    
   Let us now assume \eqref{eq:alphabeta}. Notice that the dominated convergence theorem yields
    \[\lim_{s\to 0^+}\int_\R(1-|\bu_q(x)|^2)v(sx)^2\d x =  v(0)^2\int_\R (1-|\bu_q|^2)\d x.\]
    Furthermore,
    \[\|v\|^4_4\leq\|v\|^2_\infty\|v\|_2^2=v(0)^2 m.\]
    Therefore, by \eqref{eq:alphabeta}, for $s>0$ small enough, we obtain
    \[\frac{\|v'\|_2^2}{2} s +\frac{\beta}{4}\|v\|_4^4 -\frac{\alpha}{2}\int_\R(1-|\bu_q(x)|^2) v(sx)^2 \d x<0.\]
    Thus, from \eqref{proof:negative} it follows  $\Emin(q,m)<\Emin(q,0)$. 
\end{proof}

Let us now prove the properties of $\Emin$ gathered in Theorem~\ref{thm:properties}, except the strict subadditivity property \ref{item:subadd}, which will be proved later on.

\begin{proof}[Proof of Theorem~\ref{thm:properties}~\ref{item:monotoneq}-\ref{item:lipschitz}]
	To prove \ref{item:monotoneq}, let $m, q_1,$ and $q_2$ be nonnegative real numbers such that $q_1<q_2$, and let $\varepsilon>0$. Take $(u,v)\in\boX_{q_2,m}$ be such that $E(u,v)<\Emin(q_2,m)+\varepsilon$. Since $u\in\NE$, there is a lifting $u=\rho e^{i\theta}$.  Let $\lambda=q_1/q_2\in [0,1)$, and $u_\lambda=\rho e^{i\lambda\theta}$. It is clear that $p(u_\lambda)=q_1$, and
\[\Emin(q_1,m)\leq E(u_\lambda,v)\leq E(u,v)\leq\Emin(q_2,m)+\varepsilon.\]
Letting $\varepsilon$ tend to zero, we deduce that the function $q\mapsto \Emin(q,m)$ is nondecreasing in $[0,\infty)$. 

{On the other hand, given $(u,v)\in\boE(\R)\times H^1(\R)$, Young inequality and $\alpha^2\leq\beta$ imply
\begin{equation*}
\frac14(1-|u|^2)^2 + \frac{\beta}{4}v^4 -\frac{\alpha}{2}(1-|u|^2) v^2 \geq \frac14\Big(1-\frac{\alpha^2}{\beta}\Big)(1-|u|^2)^2 \geq 0.
\end{equation*}
Hence, $E(u,v)\geq 0$, so that $\Emin(q,m)\geq 0$ for all $q\in [0,\pi/2)$, $m\geq 0$. 

In addition, let $m\geq 0$ and $v\in H^1(\R)$ such that $\|v\|_2^2=m$. For every $\lambda>0$, define $v_\lambda(x)=\sqrt{\lambda}v(\lambda x)$, for all $x\in\R$. It is clear that $\|v_\lambda\|_2^2=m$ for all $\lambda>0$, and
\[E(1,v_\lambda)=\int_\R\left(\frac{\lambda^2}{2}(v')^2+\frac{\beta\lambda}{4}v^4\right)\d x.\]
Therefore $E(1,v_\lambda)\to 0$ as $\lambda\to 0$. In particular, since $(1,v_\lambda)\in\boX_{0,m}$ for all $\lambda>0$, it follows that $\Emin(0,m)=0$.} This finishes the proof of \ref{item:monotoneq}. 

Let us now prove \ref{item:monotonem}. To do so,  let $q, m_1,$ and $m_2$ be nonnegative real numbers such that $m_1<m_2$, and let $\varepsilon>0$. Take $(u,v)\in\boX_{q,m_2}$ satisfying that $E(u,v)<\Emin(q,m_2)+\varepsilon$. Let $\lambda=m_1/m_2\in [0,1)$, and $v_\lambda=\sqrt{\lambda}v$. It is clear that $\|v_\lambda\|^2_2=m_1$, and
\begin{align*}
	\Emin(q,m_1)&+\frac{\alpha m_1}{2}\leq E(u,v_\lambda)+\frac{\alpha m_1}{2}
\\ 
&=E(u,0)+\int_\R\left(\frac{\lambda}{2}(v')^2+\frac{\beta\lambda^2}{4}v^4-\frac{\alpha}{2}v^2(1-\lambda|u|^2)\right)\d x +\frac{\alpha m_2}{2}
\\
&\leq E(u,v)+\frac{\alpha m_2}{2}<\Emin(q,m_2)+\frac{\alpha m_2}{2}+\varepsilon.
\end{align*}
We conclude again by letting $\varepsilon$ tend to zero.

In order to show \eqref{lipsineq}, let $\varepsilon>0$, $0\leq q_1\leq q_2<\pi/2$, $0\leq m_1\leq m_2$, and let us distinguish three different cases:

\emph{Case 1:} Let us first assume that $0<q_1<q_2$. By Lemma~\ref{lemma:sameinf}, there exists $(u_1,v_1)\in\boX_{q_1,m_1}^0$ such that $E(u_1,v_1)<\Emin(q_1,m_1)+\varepsilon$. Analogously, denoting $s=q_2-q_1\in (0,\pi/2)$ and $t=m_2-m_1\geq 0$, there exists $(u,v)\in\boX_{s,t}^0$ such that $E(u,v)<\Emin(s,t)+\varepsilon$. Moreover, Lemma~\ref{lemma:smallE}  implies that $E(u,v)<\sqrt{2}(q_2-q_1)+\varepsilon$. Let us write $u_1=\rho_1 e^{i\theta_1}$ and $u=\rho e^{i\theta}$. There is no loss of generality (by applying a translation) in assuming that $1-\rho_1$ and $1-\rho$ have disjoint supports, and the same holds for $\theta'_1$ and $\theta'$, and for $v_1$ and $v$. Hence, defining 
\[\rho_2=\rho_1+\rho-1,\quad\theta_2=\theta_1+\theta,\quad u_2=\rho_2 e^{i\theta_2},\quad v_2=v_1+ v,\]
we deduce that $(u_2,v_2)\in \boX_{q_2,m_2}$ and
\[\Emin(q_2,m_2)\leq E(u_2,v_2)=E(u_1,v_1)+E(u,v)<\Emin(q_1,m_1)+\sqrt{2}(q_2-q_1)+2\varepsilon.\]
Since $\varepsilon$ is arbitrary, \eqref{lipsineq} follows in this case. 

\emph{Case 2:} Assume now that $q_1=q_2$. Since \eqref{lipsineq} is trivially satisfied for $q_1=q_2=0$, we may assume in addition that $q_1=q_2>0$. Arguing as for the previous case, let $(u_1,v_1)\in\boX_{q_1,m_1}^0$ be such that $E(u_1,v_1)<\Emin(q_1,m_1)+\varepsilon$. In addition, let $v\in\boC_0^\infty(\R)$ be such that $\|v\|_2^2=m_2-m_1$ and $E(1,v)<\varepsilon$ (one may find such a function $v$ by rescaling a given function with mass $m_2-m_1$, as in the proof of item \ref{item:monotoneq} or the proof of Lemma~\ref{lemma:smallE}). Applying a translation if necessary, assume that $v$ and $v_1$ have disjoint supports, and define $v_2=v_1+v$. Then, $\|v_2\|_2^2=m_2$ and
\begin{align*}
\Emin(q_2,m_2)&=\Emin(q_1,m_2)\leq E(u_1,v_2)
\\
&=E(u_1,v_1)+\int_\R\left(\frac12(v')^2+\frac{\beta}{4}v^4-\frac{\alpha}{2}v^2(1-|u_1|^2)\right)\d x
\\
&\leq E(u_1,v_1)+E(1,v)<\Emin(q_1,m_1)+2\varepsilon.    
\end{align*}
Thus, \eqref{lipsineq} follows again.

\emph{Case 3:} If $q_1=0<q_2$, {Lemma~\ref{lemma:smallE}} implies
\[\Emin(q_2,m_2)<\sqrt{2}q_2=\Emin(q_1,m_1)+\sqrt{2}(q_2-q_1).\]
Hence, \eqref{lipsineq} follows one more time.

The fact that $\Emin$ is nonincreasing in $m$ is nothing but Case 2 above. Moreover, fixing $q\in(0,\pi/2)$ and $m>0$, it follows from Lemma~\ref{lemma:smallE} the existence of $m_0\in (0,m)$ such that
\[\Emin(q,m)\leq\Emin(q,m_0)<\Emin(q,0).\]
This concludes the proof of \ref{item:decreasingm}.

We finally prove \ref{item:lipschitz}. Let $q_1,q_2\in [0,\pi/2)$ and $m_1,m_2\in [0,\infty)$. Assume that $q_1<q_2$ and $m_2<m_1$, being the remaining possibilities dealt with analogously. By \ref{item:monotoneq}, \ref{item:monotonem}, and \eqref{lipsineq}, we derive
\begin{align*}
&|\Emin(q_2,m_2) -\Emin(q_1,m_1)|\leq \frac{\alpha}{2}(m_1-m_2)+|\Emin(q_2,m_1)-\Emin(q_1,m_1)|
\\
&+\Big|\Emin(q_2,m_2)+\frac{\alpha m_2}{2}-\Emin(q_2,m_1)-\frac{\alpha m_1}{2}\Big|
\\
&= \Emin(q_2,m_1)-\Emin(q_1,m_1)+\Emin(q_2,m_1)-\Emin(q_2,m_2)+\alpha(m_1-m_2)
\\
&\leq\sqrt{2}(q_2-q_1)+\alpha(m_1-m_2).
\end{align*}
This shows that $\Emin$ is a Lipschitz function. 
\end{proof}

The following result provides the boundedness of sequences whose energy converges to $\Emin(q,m)$ and whose mass converges to $m$.

\begin{lemma}\label{lemma:energybounded}
    Let $\alpha>0$, $\beta\geq 0$, $q\in(0,\pi/2)$, and $m\geq 0$, and let $\{(u_n,v_n)\}\subset \boX$ be such that    \begin{equation}\label{eq:almostminimizingseq}
        \|v_n\|_2^2\to m,\quad E(u_n,v_n)\to\Emin(q,m),\quad\text{as }n\to\infty.
    \end{equation}
    Let us consider the lifting $u_n=\rho_n e^{i\theta_n}$. Then, for every large enough $n$, it holds     \begin{equation}\label{eq:coercive}
    2\|\rho'_n\|_{2}^2 + \|1-\rho_n^2\|_2^2 + 2\|\rho_n\theta_n'\|_2^2 + 2\|v_n'\|^2_2 + \beta\|v_n\|_4^4\leq 4\sqrt{2}q+2\alpha m.
    \end{equation}
\end{lemma}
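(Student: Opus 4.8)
The plan is to bound each positive contribution to the energy by a constant depending only on $q$ and $m$, using the upper bound $\Emin(q,m) < \sqrt{2}q$ from Lemma~\ref{lemma:smallE} together with the pointwise control of the one sign-indefinite term $-\frac{\alpha}{2}(1-\rho_n^2)v_n^2$. First I would recall from \eqref{eq:energy} that, for $u_n=\rho_n e^{i\theta_n}\in\NE$,
\[
E(u_n,v_n)=\int_\R\Big(\tfrac12(\rho_n')^2+\tfrac14(1-\rho_n^2)^2+\tfrac12\rho_n^2(\theta_n')^2+\tfrac12(v_n')^2+\tfrac{\beta}{4}v_n^4-\tfrac{\alpha}{2}(1-\rho_n^2)v_n^2\Big)\d x,
\]
all of whose terms except the last are nonnegative. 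The key observation is that $|u_n|<2$ forces $-(1-\rho_n^2)=\rho_n^2-1<3$ pointwise, so that $-\frac{\alpha}{2}(1-\rho_n^2)v_n^2\le \frac{3\alpha}{2}v_n^2$; hence
\[
E(u_n,v_n)\ge \int_\R\Big(\tfrac12(\rho_n')^2+\tfrac14(1-\rho_n^2)^2+\tfrac12\rho_n^2(\theta_n')^2+\tfrac12(v_n')^2+\tfrac{\beta}{4}v_n^4\Big)\d x-\tfrac{3\alpha}{2}\|v_n\|_2^2 .
\]
Wait --- this loses a factor; let me instead keep the sign term on the other side more carefully: since all the listed terms are nonnegative, I bound the sum of the first five integrals by $E(u_n,v_n)+\frac{\alpha}{2}\int_\R(1-\rho_n^2)v_n^2\,\d x$, and estimate the remaining integral directly.

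Concretely, I would write
\[
\tfrac12\|\rho_n'\|_2^2+\tfrac14\|1-\rho_n^2\|_2^2+\tfrac12\|\rho_n\theta_n'\|_2^2+\tfrac12\|v_n'\|_2^2+\tfrac{\beta}{4}\|v_n\|_4^4
= E(u_n,v_n)+\tfrac{\alpha}{2}\int_\R(1-\rho_n^2)v_n^2\,\d x .
\]
For the last term, $1-\rho_n^2\le 1$ pointwise (note this does \emph{not} require $\rho_n\le 1$, only $\rho_n\ge 0$, which holds in $\NE$), so $\int_\R(1-\rho_n^2)v_n^2\,\d x\le \|v_n\|_2^2$. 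Therefore the left-hand side is at most $E(u_n,v_n)+\frac{\alpha}{2}\|v_n\|_2^2$. Multiplying through by $4$ gives
\[
2\|\rho_n'\|_2^2+\|1-\rho_n^2\|_2^2+2\|\rho_n\theta_n'\|_2^2+2\|v_n'\|_2^2+\beta\|v_n\|_4^4\le 4E(u_n,v_n)+2\alpha\|v_n\|_2^2 .
\]
Finally I invoke \eqref{eq:almostminimizingseq}: since $E(u_n,v_n)\to\Emin(q,m)$ and $\Emin(q,m)<\sqrt{2}q$ by \eqref{eq:eminineq} in Lemma~\ref{lemma:smallE}, for all sufficiently large $n$ we have $E(u_n,v_n)<\sqrt{2}q$; likewise $\|v_n\|_2^2\to m$ gives $\|v_n\|_2^2< m+\delta$ for any fixed $\delta>0$ and $n$ large, and absorbing the slack into the strict inequality $4E(u_n,v_n)<4\sqrt2 q$ yields $4E(u_n,v_n)+2\alpha\|v_n\|_2^2\le 4\sqrt2 q+2\alpha m$ for $n$ large enough. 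This is exactly \eqref{eq:coercive}.

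I do not anticipate a serious obstacle here; the only point requiring a little care is the bookkeeping of the limit: one cannot replace $E(u_n,v_n)$ by $\Emin(q,m)$ and $\|v_n\|_2^2$ by $m$ exactly, so one should either phrase the conclusion with an $o(1)$ error and then note the strictness $\Emin(q,m)<\sqrt2 q$ lets one absorb it, or (cleaner) combine the two convergences into $4E(u_n,v_n)+2\alpha\|v_n\|_2^2\to 4\Emin(q,m)+2\alpha m<4\sqrt2 q+2\alpha m$ and conclude that the left-hand side of \eqref{eq:coercive}, being bounded above by this convergent quantity, satisfies the stated bound for all large $n$. The pointwise bound $1-\rho_n^2\le1$ (valid since $\rho_n=|u_n|\ge0$) is what makes the sign-indefinite term harmless; the constraint $|u_n|<2$ itself is not even needed for this particular estimate, only $\rho_n\ge 0$.
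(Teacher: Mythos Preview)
Your proof is correct and is essentially the same as the paper's: the paper writes the identity as $2\|\rho_n'\|_2^2+\|1-\rho_n^2\|_2^2+2\|\rho_n\theta_n'\|_2^2+2\|v_n'\|_2^2+\beta\|v_n\|_4^4=4E(u_n,v_n)-2\alpha\int_\R\rho_n^2v_n^2\,\d x+2\alpha\|v_n\|_2^2$ and drops the nonnegative term $\int\rho_n^2v_n^2$, which is exactly your bound $\int(1-\rho_n^2)v_n^2\le\|v_n\|_2^2$, and then invokes \eqref{eq:eminineq}. Your explicit handling of the limit (combining $4E(u_n,v_n)+2\alpha\|v_n\|_2^2\to 4\Emin(q,m)+2\alpha m<4\sqrt{2}q+2\alpha m$) is in fact more careful than the paper's ``it clearly follows''.
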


\begin{proof}
    Observe that
    \[2\|\rho'_n\|_2^2 + \|1-\rho_n^2\|_2^2 + 2\|\rho_n\theta_n'\|_2^2 + 2\|v_n'\|^2_2 +\beta\|v_n\|_4^4=4E(u_n,v_n)-2\alpha\int_\R\rho_n^2v_n^2\d x+2\alpha \|v_n\|_2^2.\]
    From \eqref{eq:eminineq}, it clearly follows \eqref{eq:coercive}.
\end{proof}

Next result establishes a suitable positive bound from below for minimizing sequences.
This is a key step for proving a subadditivity property for $\Emin$. The second part of the lemma will also be used to rule out vanishing in the concentration--compactness argument, see Remark~\ref{remark:vanishing}.

\begin{lemma}\label{lemma:delta}
    Let $\alpha>0$, $\beta\geq 0$, $q\in(0,\pi/2)$, and $m\geq 0$, and let $\{(u_n,v_n)\}\subset \boX^0_{q,m}$ be a minimizing sequence, i.e. 
    \begin{equation}\label{eq:minimizingseq}
        E(u_n,v_n)\to\Emin(q,m),\quad\text{as }n\to\infty.
    \end{equation}
    Let us consider the lifting $u_n=\rho_n e^{i\theta_n}$. Then,  there exists $\delta>0$, independent of $n$, such that $\|\rho_n'\|_2>\delta$ for all large enough $n$. In addition, if $m>0$, then $\|v_n\|_4>\delta$ for all large enough $n$.
\end{lemma}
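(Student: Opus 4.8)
The idea is to argue by contradiction in both parts, using the coercivity bound \eqref{eq:coercive} from Lemma~\ref{lemma:energybounded} together with the strict inequality $\Emin(q,m)<\sqrt2\,q$ from \eqref{eq:eminineq}, and the explicit structure of elements of $\boX^0_{q,m}$ (compactly supported, even, radially nonincreasing profiles). Suppose first that $\|\rho_n'\|_2\to 0$ along a subsequence. Since $1-\rho_n$ is even, nonnegative and radially nonincreasing with $\rho_n\le 1$, we have $\|1-\rho_n\|_\infty=(1-\rho_n)(0)$, and a one-dimensional Gagliardo--Nirenberg/Sobolev-type estimate bounds $\|1-\rho_n\|_\infty$ (hence $\|1-\rho_n^2\|_\infty$) in terms of $\|\rho_n'\|_2$ and the fixed bounds coming from \eqref{eq:coercive}; more directly, for a radially monotone compactly supported function one can write $(1-\rho_n(0))^2 = -\int_0^\infty \frac{d}{dx}(1-\rho_n(x))^2\,dx \le 2\|\rho_n'\|_2\|1-\rho_n\|_2$, so $\|1-\rho_n\|_\infty\to 0$. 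Then $\|1-\rho_n^2\|_\infty\to 0$, and the momentum obeys
\[
q = p(u_n) = \frac12\int_\R G(1-\rho_n)\,\theta_n'\,dx \le \frac12\|G(1-\rho_n)\|_\infty^{1/2}\,\|G(1-\rho_n)\|_\infty^{1/2}\cdots,
\]
—more cleanly, using $G(1-\rho_n)=1-\rho_n^2$ here (since $\rho_n\le1$), Cauchy--Schwarz gives $q\le \tfrac12\|1-\rho_n^2\|_\infty^{1/2}\|1-\rho_n^2\|_2^{1/2}\|\rho_n\theta_n'\|_2\big/\inf\rho_n$, which would force $q\to0$ if $\inf\rho_n$ stayed bounded below; to avoid worrying about $\inf\rho_n$, instead bound $q\le\tfrac12\|1-\rho_n^2\|_2\|\theta_n'\|_2$ and note $\|\theta_n'\|_2\le \|\rho_n\theta_n'\|_2/\inf\rho_n$ is still awkward, so the robust route is: $2q=\int G(1-\rho_n)\theta_n' \le \|G(1-\rho_n)\|_\infty\int_{\supp}\theta_n'$, and control $\int_{\supp}|\theta_n'|$ by Cauchy--Schwarz on the support — but the support length is not uniformly bounded. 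The clean resolution is the energetic one: by $2ab\le a^2+b^2$ applied pointwise,
\[
\sqrt2\, q > \Emin(q,m) \ \leftarrow\ E(u_n,v_n)\ \ge\ \int_\R\Big(\tfrac12(\rho_n')^2+\tfrac14(1-\rho_n^2)^2+\tfrac12\rho_n^2(\theta_n')^2\Big)dx - \tfrac\alpha2 m + o(1),
\]
and on the set where $\rho_n$ is close to $1$ one estimates $\tfrac14(1-\rho_n^2)^2+\tfrac12\rho_n^2(\theta_n')^2 \ge \tfrac{1}{\sqrt2}\rho_n|1-\rho_n^2||\theta_n'| \ge \tfrac{1}{\sqrt2}|G(1-\rho_n)\theta_n'| - (\text{small correction})$, giving $E(u_n,v_n)\ge \sqrt2\,p(u_n) - \tfrac\alpha2 m - (\text{error}) = \sqrt2\, q-\tfrac\alpha2 m-(\text{error})$; when $\|\rho_n'\|_2\to0$ the error, which is controlled by $\|1-\rho_n\|_\infty$ and $\alpha m$ pieces, does not by itself close the gap, so one must instead observe that $\Emin(q,0)=(2-c^2)^{3/2}/3>0$ is a \emph{fixed} positive number and that $\|\rho_n'\|_2\to0$ forces $E(u_n,0)\to 0$ via the same rearrangement/Sobolev argument applied to the scalar quantity $\int(\tfrac12(\rho_n')^2+\tfrac14(1-\rho_n^2)^2+\tfrac12\rho_n^2(\theta_n')^2)$, which is bounded below by a constant multiple of $F_{\min}(q)=\Emin(q,0)$ — contradiction.

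Let me restate the scalar-component argument, which is the crux. Write $e_n:=\int_\R\big(\tfrac12(\rho_n')^2+\tfrac14(1-\rho_n^2)^2+\tfrac12\rho_n^2(\theta_n')^2\big)dx$. Then $E(u_n,v_n)= e_n + \int_\R\big(\tfrac12(v_n')^2+\tfrac\beta4 v_n^4 - \tfrac\alpha2(1-\rho_n^2)v_n^2\big)dx \ge e_n - \tfrac\alpha2 m$. On the other hand $e_n \ge \Emin(q,0) = F_{\min}(q) > 0$ is false in general (the $v$-terms can push total energy below), so instead I use: since $(u_n,v_n)\in\boX^0_{q,m}$, the pair $(u_n,0)\in\NE\times H^1$ has $p(u_n)=q$, hence $e_n = E(u_n,0)\ge \Emin(q,0)>0$. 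This is the key point — $e_n$ is bounded below by the fixed positive constant $\Emin(q,0)$ regardless of $v_n$. Now if $\|\rho_n'\|_2\to 0$: the rearrangement-type bound $\|1-\rho_n\|_\infty^2\le 2\|\rho_n'\|_2\|1-\rho_n\|_2\le 2\|\rho_n'\|_2\cdot\sqrt{4\sqrt2 q+2\alpha m}$ (using \eqref{eq:coercive}) gives $\|1-\rho_n\|_\infty\to0$; then $\int\tfrac14(1-\rho_n^2)^2\le \tfrac14\|1-\rho_n^2\|_\infty\|1-\rho_n^2\|_2\to 0$ and $\int\tfrac12(\rho_n')^2\to0$; the term $\int\tfrac12\rho_n^2(\theta_n')^2$ is controlled because $2q=\int G(1-\rho_n)\theta_n'\le \|G(1-\rho_n)\|_\infty^{1/2}\big(\int G(1-\rho_n)(\theta_n')^2\big)^{1/2}|\supp(\cdot)|^{1/2}$ — still the support issue. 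So the honest final step for the $\theta_n'$ term: from $2q = \int (1-\rho_n^2)\theta_n' \le \|1-\rho_n^2\|_2\,\|\theta_n'\|_2$ and $\|\rho_n\theta_n'\|_2^2\le 2\sqrt2 q+\alpha m$ plus $\inf_\R \rho_n \ge 1-\|1-\rho_n\|_\infty \to 1$, we get $\|\theta_n'\|_2$ bounded, hence $2q\le \|1-\rho_n^2\|_2\cdot C\to 0$, contradicting $q>0$. That is the contradiction; no need to invoke $e_n$ after all, though it provides a cross-check.

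For the second part, assume $m>0$ and $\|v_n\|_4\to 0$ along a subsequence. Then $\int_\R\tfrac\beta4 v_n^4\to0$ and, by Cauchy--Schwarz, $\int_\R(1-\rho_n^2)v_n^2 \le \|1-\rho_n^2\|_2\,\|v_n\|_4^2 \le C\|v_n\|_4^2\to 0$ (using \eqref{eq:coercive} for $\|1-\rho_n^2\|_2$). Hence $E(u_n,v_n) = e_n + \tfrac12\|v_n'\|_2^2 + o(1) \ge E(u_n,0) + o(1) \ge \Emin(q,0) + o(1)$, where in the last step I again use that $p(u_n)=q$ forces $E(u_n,0)\ge\Emin(q,0)$. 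Taking $n\to\infty$ gives $\Emin(q,m)\ge\Emin(q,0)$, contradicting the strict inequality $\Emin(q,m)<\Emin(q,0)$ from \eqref{eq:energeticallyfav} in Theorem~\ref{thm:properties}\,\ref{item:decreasingm}, valid precisely because $q\in(0,\pi/2)$ and $m>0$. This proves $\|v_n\|_4>\delta$ for large $n$.

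The main obstacle I anticipate is making the first part fully rigorous without circularity: one must extract quantitative decay of $\|1-\rho_n\|_\infty$ from $\|\rho_n'\|_2\to0$ using only the radial-monotonicity of the profiles in $\boX^0_{q,m}$ and the a priori bound \eqref{eq:coercive}, then feed this back to bound $\inf_\R\rho_n$ away from $0$, and only then bound $\|\theta_n'\|_2$ via $\|\rho_n\theta_n'\|_2$. Each implication is elementary, but the order matters and one must be careful that no estimate secretly requires an upper bound on the (non-uniform) length of $\supp(1-\rho_n)$ — which is why the argument is routed through $L^2$ Cauchy--Schwarz on $2q=\int(1-\rho_n^2)\theta_n'$ rather than through $L^\infty$ estimates on the support.
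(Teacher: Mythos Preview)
Your argument for the second statement ($\|v_n\|_4>\delta$ when $m>0$) is correct and coincides with the paper's: assuming $\|v_n\|_4\to0$, H\"older gives $\int(1-\rho_n^2)v_n^2\le\|1-\rho_n^2\|_2\|v_n\|_4^2\to0$, hence $E(u_n,v_n)\ge E(u_n,0)+o(1)\ge\Emin(q,0)+o(1)$, so $\Emin(q,m)\ge\Emin(q,0)$, contradicting \eqref{eq:energeticallyfav}.

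For the first statement, however, your final step contains a genuine gap. You correctly derive $\|1-\rho_n\|_\infty\to0$ (via $(1-\rho_n(0))^2\le2\|\rho_n'\|_2\|1-\rho_n\|_2$ together with \eqref{eq:coercive}), hence $\inf_\R\rho_n\to1$ and $\|\theta_n'\|_2$ bounded. But you then assert ``$2q\le\|1-\rho_n^2\|_2\cdot C\to0$'', which requires $\|1-\rho_n^2\|_2\to0$ --- and this is never established. From $\|\rho_n'\|_2\to0$ and the coercivity bound \eqref{eq:coercive} you only get $\|1-\rho_n^2\|_2$ \emph{bounded}, not small: the support of $1-\rho_n$ may spread while $\|1-\rho_n\|_\infty$ shrinks, keeping the $L^2$ norm fixed. (Your intermediate attempt ``$\int\tfrac14(1-\rho_n^2)^2\le\tfrac14\|1-\rho_n^2\|_\infty\|1-\rho_n^2\|_2$'' is dimensionally wrong --- the interpolation $\|f\|_2^2\le\|f\|_\infty\|f\|_1$ would need an $L^1$ bound you do not have.)

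The fix is to return to the Young-inequality route you sketched and abandoned too early. Your mistake there was the crude bound $-\tfrac\alpha2\int(1-\rho_n^2)v_n^2\ge-\tfrac\alpha2 m$; instead keep this term. Writing $\tfrac12\rho_n^2(\theta_n')^2=\tfrac12(\theta_n')^2-\tfrac12(1-\rho_n^2)(\theta_n')^2$ and applying $\tfrac14 a^2+\tfrac12 b^2\ge\tfrac{1}{\sqrt2}ab$ to $a=1-\rho_n^2$, $b=\theta_n'$ (both nonnegative since $(u_n,v_n)\in\boX_{q,m}^0$) gives
\[
E(u_n,v_n)\ \ge\ \sqrt2\,q\ -\ \tfrac12\int_\R(1-\rho_n^2)\big((\theta_n')^2+\alpha v_n^2\big)\,dx.
\]
Now the error term is controlled by $\tfrac12\|1-\rho_n^2\|_\infty\big(\|\theta_n'\|_2^2+\alpha m\big)\to0$, using exactly the $L^\infty$ decay and the bound on $\|\theta_n'\|_2$ you already proved. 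This yields $\Emin(q,m)\ge\sqrt2\,q$, contradicting \eqref{eq:eminineq}. This is precisely the paper's argument; you had all the ingredients but misrouted the final estimate.
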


\begin{proof}
    We start by showing that $\|\rho_n'\|_2\geq\delta$. Arguing by contradiction, assume that, up to a subsequence, $\|\rho_n'\|_2\to 0$ as $n\to \infty$. From Young inequality, we obtain
    \[\int_\R\left(\frac14(1-\rho_n^2)^2+\frac12(\theta_n')^2\right)\d x\geq \frac{\sqrt{2}}{2}\int_\R(1-\rho_n^2)\theta_n'\d x.\]
    Since $(u_n,v_n)\in\boX_{q,m}^0$, it follows that $\rho_n\leq 1$ in $\R$, which implies that 
    \[\frac{1}{2}\int_\R(1-\rho_n^2)\theta_n'\d x=p(u_n)=q.\]
    Therefore,  \begin{align}\label{proof:Esqrt2q}
        \nonumber E(u_n,v_n)&\geq \int_\R\left(\frac12\rho_n^2(\theta_n')^2+\frac14(1-\rho_n^2)^2-\frac{\alpha}{2}(1-\rho_n^2)v_n^2\right)\d x
        \\
        &=\int_\R\left(\frac12 (\theta_n')^2+\frac14(1-\rho_n^2)^2-\frac{1}{2}(1-\rho_n^2)(\alpha v_n^2+(\theta_n')^2)\right)\d x
        \\
        \nonumber &\geq \sqrt{2}q - \frac{1}{2}\int_\R (1-\rho_n^2)(\alpha v_n^2+(\theta_n')^2)\d x.
    \end{align}
    On the other hand, by H\"older inequality, and the fact that $\{E(u_n,v_n)\}$ is bounded, we deduce
    \begin{align*}
        (1-\rho_n(x))^2&=-2\int_{-\infty}^x(1-\rho_n(y))\rho_n'(y)\d y=-2\int_{-\infty}^x\frac{(1-\rho_n(y)^2)}{1+\rho_n(y)}\rho_n'(y)\d y
        \\
        &\leq 2\|1-\rho_n^2\|_2\|\rho_n'\|_2\leq 2(4E(u_n,v_n)+2\alpha m)^\frac12\|\rho_n'\|_2\to 0,\quad\text{as }n\to\infty. 
    \end{align*}
    In particular, $\|1-\rho_n\|_\infty\to 0$ as $n\to\infty$. Moreover, since $1-\rho_n^2=(1-\rho_n)(2-(1-\rho_n))$, it also holds that $\|1-\rho_n^2\|_\infty\to 0$ as $n\to\infty$. Furthermore, since $\rho_n\geq 1-|1-\rho_n|$, it follows that $\rho_n\geq 1/2$ in $\R$, for every $n$ large enough. Hence, 
    \begin{align*}
        \int_\R(1-\rho_n^2)&(\alpha v_n^2+(\theta_n')^2)\d x \leq \|1-\rho_n^2\|_\infty\Big(m\alpha+\int_\R(\theta_n')^2\d x\Big)
        \\
        &\leq \|1-\rho_n^2\|_\infty\Big(m\alpha+4\int_\R\rho_n^2(\theta_n')^2\d x\Big)
        \\
        &\leq \|1-\rho_n^2\|_\infty(m\alpha+4(2E(u_n,v_n)+\alpha m))\to 0,\quad\text{as }n\to\infty.
    \end{align*}
    Passing to the limit in \eqref{proof:Esqrt2q}, we derive $\Emin(q,m)\geq\sqrt{2}q$, which contradicts Lemma~\ref{lemma:smallE}.

    Let us now assume that $m>0$ and show that $\|v_n\|_4\geq\delta$. Indeed, assume by contradiction that, up to a subsequence, $\|v_n\|_4\to 0$ as $n\to\infty$. Consequently, by H\"older inequality and \eqref{eq:coercive}, we derive
    \begin{equation*}
        \int_\R(1-\rho_n^2)v_n^2dx\leq \|1-\rho_n^2\|_2\|v_n\|_4^2\to 0,\quad\text{as }n\to\infty.
    \end{equation*}
    On the other hand, it is immediate to check that
    \[E(u_n,v_n)\geq E(u_n,0)-\frac{\alpha}{2}\int_\R(1-\rho_n^2)v_n^2{\d x}\geq \Emin(q,0)-\frac{\alpha}{2}\int_\R(1-\rho_n^2)v_n^2{\d x}.\]
    Passing to the limit, we conclude $\Emin(q,m)\geq \Emin(q,0)$. This contradicts \eqref{eq:energeticallyfav} in Theorem~\ref{thm:properties}, which has already been proved.
\end{proof}

We are now ready to prove the strict subadditivity property in Theorem~\ref{thm:properties}.

\begin{proof}[Proof of Theorem~\ref{thm:properties}~\ref{item:subadd}]
    If $q_1q_2=0$, then \eqref{eq:subadditive} is a direct consequence of item \ref{item:decreasingm} in Theorem~\ref{thm:properties}. Thus, let us consider the case
    $q_1q_2>0$. For $j=1,2$, let $\{(u_n^{(j)},v_n^{(j)}\}\subset \boX_{q_j,m_j}^0$ be such that 
    \[E(u_n^{(j)},v_n^{(j)})\to \Emin(q_j,m_j),\quad\text{as }n\to\infty.\]
    Let us consider the liftings $u_n^{(j)}=\rho_n^{(j)}e^{i\theta_n^{(j)}}$. Since $1-\rho_n^{(1)}$ and $1-\rho_n^{(2)}$ have compact supports, we may assume without loss of generality (by simply applying a translation) that their supports are disjoint. The same argument applies for $(\theta_n^{(1)})'$ and $(\theta_n^{(2)})'$, and for $v_n^{(1)}$ and $v_n^{(2)}$ unless one of them is zero (i.e. $m_1m_2=0$), which is a simpler case whose proof will be skipped. For some $\gamma_n>0$ to be chosen later, let us define
    \[\rho_n=1-(1-\rho_n^{(1)}+1-\rho_n^{(2)})^\star,\quad \zeta_n=\gamma_n((\theta_n^{(1)})'+(\theta_n^{(2)})')^\star,\quad v_n=(v_n^{(1)}+v_n^{(2)})^\star.\]
    Recall that $0\leq 1-\rho_n^{(j)}<1$ for $j=1,2$. Since they have disjoints supports, it follows that $0\leq 1-\rho_n^{(1)}+1-\rho_n^{(2)}<1$ too.   Proposition~\ref{prop:integrals} implies that $\|1-\rho_n^{(1)}+1-\rho_n^{(2)}\|_\infty=\|(1-\rho_n^{(1)}+1-\rho_n^{(2)})^\star\|_\infty$, so  $\rho_n>0$. Taking now $\theta_n$ as any primitive of $\zeta_n$, we define $u_n=\rho_ne^{i\theta_n}$. Taking into account that the supports are disjoint, from Proposition~\ref{prop:compositions} and Theorem~\ref{thm:HardyLittlewood} we deduce
    \begin{align*}
       \int_\R G(1-\rho_n)((\theta_n^{(1)})'+(\theta_n^{(2)})')^\star \d x&\geq\int_\R G(1-\rho_n^{(1)}+1-\rho_n^{(2)})((\theta_n^{(1)})'+(\theta_n^{(2)})') \d x
        \\
        \nonumber &=2(p(u_n^{(1)})+p(u_n^{(2)}))=2(q_1+q_2)>0.
    \end{align*}    
    In particular, for every $n$, the real number 
    \[\gamma_n=2(q_1+q_2)\Big(\int_\R G(1-\rho_n)((\theta_n^{(1)})'+(\theta_n^{(2)})')^\star \d x\Big)^{-1},\]
    is well defined, $\gamma_n\in (0,1]$, and $p(u_n)=q_1+q_2$. Moreover, Proposition~\ref{prop:integrals} implies that $\|v_n\|^2_2=m_1+m_2$.  Thus, $(u_n,v_n)\in\boX_{q_1+q_2,m_1+m_2}$ and, in turn, 
    \[\Emin(q_1+q_2,m_1+m_2)\leq E(u_n,v_n).\]
    Next we estimate $E(u_n,v_n)$ term by term. First, Theorem~\ref{thm:key} implies
    \begin{align*}
        \|\rho_n'\|_2^2&\leq \|(\rho_n^{(1)})'\|_2^2 + \|(\rho_n^{(2)})'\|^2_2 - \frac34\min\{\|(\rho_n^{(1)})'\|_2^2,\|(\rho_n^{(2)})'\|_2^2\},
        \\
        \|v_n'\|_2^2&\leq \|(v_n^{(1)})'\|_2^2 + \|(v_n^{(2)})'\|^2_2 - \frac34\min\{\|(v_n^{(1)})'\|_2^2,\|(v_n^{(2)})'\|_2^2\}.
    \end{align*}
    Furthermore,  Proposition~\ref{prop:integrals}, and the compactness of the supports, lead to
    \begin{equation*}
        \|1-\rho_n^2\|_2^2= \|1-(\rho_n^{(1)})^2\|_2^2+\|1-(\rho_n^{(2)})^2\|_2^2,\quad
        \|v_n\|_4^4= \|v_n^{(1)}\|_4^4+\|v_n^{(2)}\|_4^4.
    \end{equation*}
    In addition,  Theorem~\ref{thm:HardyLittlewood}, and again the compactness of the supports, yield
    \[\int_\R(1-\rho_n^2)v_n^2\d x\geq \int_\R(1-(\rho_n^{(1)})^2)(v_n^{(1)})^2\d x + \int_\R(1-(\rho_n^{(2)})^2)(v_n^{(2)})^2\d x.\]
    Finally, let us focus on the term 
    \[\int_\R\rho_n^2(\theta_n')^2\d x=\int_\R(\theta_n')^2\d x-\int_\R(1-\rho_n^2)(\theta_n')^2\d x.\] 
    On the one hand,
    \[\|\theta_n'\|_2^2=\gamma_n^2(\|(\theta_n^{(1)})'\|_2^2+\|(\theta_n^{(2)})'\|_2^2).\]
    On the other hand,
    \[\int_\R(1-\rho_n^2)(\theta_n')^2\d xn\geq \gamma_n^2\int_\R(1-(\rho_n^{(1)})^2)((\theta_n^{(1)})')^2\d x + \gamma_n^2\int_\R(1-(\rho_n^{(2)})^2)((\theta_n^{(2)})')^2\d x.\]
    In sum, recalling that $\gamma_n\leq 1$, we deduce
    \begin{align*}
        \int_\R\rho_n^2(\theta_n')^2\d x&\leq \gamma_n^2\int_\R(\rho_n^{(1)})^2((\theta_n^{(1)})')^2\d x + \gamma_n^2\int_\R(\rho_n^{(2)})^2((\theta_n^{(2)})')^2\d x
        \\
        &\leq \int_\R(\rho_n^{(1)})^2((\theta_n^{(1)})')^2\d x + \int_\R(\rho_n^{(2)})^2((\theta_n^{(2)})')^2\d x.
    \end{align*}
    Gathering the estimates together, we conclude
    \[E(u_n,v_n)\leq E(u_n^{(1)},v_n^{(1)})+E(u_n^{(2)},v_n^{(2)})-k_n,\]
    where 
    \begin{equation}\label{kn}
    k_n={\frac38}\Big(\min\{\|(\rho_n^{(1)})'\|_2,\|(\rho_n^{(2)})'\|_2\}+\min\{\|(v_n^{(1)})'\|_2,\|(v_n^{(2)})'\|_2\}\Big).
    \end{equation}
    This is enough to prove \eqref{eq:subadditive}. 
    
   It is obvious that \eqref{eq:subadditive} holds with equality if  $(q_1+q_2)(q_1+m_1)(q_2+m_2)=0$. Let us prove that the strict inequality holds otherwise. Indeed, let us first assume that $q_1q_2>0$.   Invoking Lemma~\ref{lemma:delta}, there exists $\delta>0$ such that $k_n>\delta$ for all $n$. Therefore,
    \[\Emin(q_1+q_2,m_1+m_2)\leq E(u_n^{(1)},v_n^{(1)})+E(u_n^{(2)},v_n^{(2)})-\delta.\]
    Thus, \eqref{eq:subadditive} holds with strict inequality if $q_1q_2>0$. 
    
    It remains to consider the case $q_1>0$ and $q_2=0$, being the case $q_2>0$ and $q_1=0$ analogous. Recall that, necessarily, $m_2>0$. Thus, bearing in mind that $\Emin(0,m_2)=0$, we are reduced to proving 
    \begin{equation}\label{reducedeq}
        \Emin(q_1,m_1+m_2)<\Emin(q_1,m_1).
    \end{equation}
    On the one hand, if $m_1=0$, then \eqref{reducedeq} holds by item \ref{item:decreasingm} in Theorem~\ref{thm:properties}, proved above. On the other hand, if $m_1>0$, Lemma~\ref{lemma:delta} implies that $\min\{\|v_n^{(1)}\|_4,\|v_n^{(2)}\|_4\}>\delta$ for some $\delta>0$. Furthermore, elementary inequalities in Lebesgue spaces yield 
    \[\delta^4<\|v_n^{(j)}\|_4^4\leq m_j\|v_n^{(j)}\|_\infty^2\leq 2m_j^2\|(v_n^{(j)})'\|_2^2,\quad j=1,2.\]
    Therefore, recalling \eqref{kn}, we deduce that $k_n>3\delta^2/({8}\sqrt{2}(m_1+m_2))$ for all $n$. This concludes the proof.
\end{proof}

\section{Compactness of minimizing sequences}\label{sec:compactness}

Next lemma provides some weak and local convergences of sequences satisfying \eqref{eq:almostminimizingseq}. It is a preliminary step towards the strong convergence of $v_n$ and $1-|u_n|^2$ in $L^2(\R)$.

\begin{lemma}\label{lemma:compactness}
Let $\alpha>0$, $\beta\geq 0$, $q\in (0,\pi/2)$, and $m\geq 0$. Assume that either \eqref{eq:smallnesscondq} or \eqref{eq:smallnesscondm} holds. Let $\{(u_n,v_n)\}\subset\boX$ satisfy \eqref{eq:almostminimizingseq}. Then, there exists $(u,v)\in\boE(\R)\times H^1(\R)$ such that, up to a subsequence,
\begin{align} \label{uconv} 
&u_n\to u, \text{ in }L^\infty_{\loc}(\R),\quad
        u_n'\rightharpoonup u',\text{ in }L^2(\R),\quad 1-|u_n|^2\rightharpoonup 1-|u|^2, \text{ in }L^2(\R),
\\
&\|u'\|_2\leq\liminf_{n\to\infty}\|u_n'\|_2,\quad \|1-|u|^2\|_2\leq\liminf_{n\to\infty}\|1-|u_n|^2\|_2, \label{uconvineq}
\\
&v_n\to v,\text{ in }L^\infty_{\loc}(\R),\quad v_n\rightharpoonup v,\text{ in }H^1(\R),\quad v_n\rightharpoonup v,\text{ in }L^r(\R),\, r\in\{2,4\}, \label{vconv}
\\
&\|v\|_{1,2}\leq\liminf_{n\to\infty}\|v_n\|_{1,2},\quad \|v\|_4\leq\liminf_{n\to\infty}\|v_n\|_4,\quad \|v\|^2_{2}\leq m, \label{vconvineq}
\\
&\int_\R |u|^2 v^2\d x\leq\liminf_{n\to\infty} \int_\R |u_n|^2 v_n^2 \d x.\label{uvconvineq}
    \end{align}
Also, there exists $\nu\in (0,1)$ such that 
\begin{equation}\label{eq:nu}
    |u_n|\geq\nu,\quad\text{in }\R,\text{ for all }n,
\end{equation}
so that $u\in\NE$ and there exists $\theta\in H^1_{\loc}(\R)$ such that $u=|u| e^{i\theta}$, $\theta'\in L^2(\R)$. Moreover, up to a subsequence,
\begin{equation}\label{thetaconv}
\theta_n'\rightharpoonup \theta',\text{ in }L^2(\R),\quad \|\theta'\|_2\leq\liminf_{n\to\infty}\|\theta_n'\|_2.
\end{equation}
\end{lemma}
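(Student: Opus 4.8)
\textbf{Proof plan for Lemma~\ref{lemma:compactness}.}

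The plan is to extract the various convergences in two stages: first a uniform positive lower bound $|u_n|\geq\nu$, which unlocks the lifting machinery, and then the weak/local compactness statements, most of which are consequences of the uniform bounds from Lemma~\ref{lemma:energybounded} together with standard weak compactness and lower semicontinuity. The genuinely delicate point, and the one where hypotheses \eqref{eq:smallnesscondq} or \eqref{eq:smallnesscondm} must enter, is the lower bound \eqref{eq:nu}: without it the limit $u$ could vanish somewhere and leave the nonvanishing energy space.

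\emph{Step 1: the uniform lower bound \eqref{eq:nu}.} By Lemma~\ref{lemma:sameinf} I may assume (after replacing $u_n$ by its symmetrized version, which does not increase the energy and preserves the constraints up to the $o(1)$ corrections present in an almost minimizing sequence) that $1-\rho_n=1-|u_n|$ is even, nonnegative and radially nonincreasing, so $\|1-\rho_n\|_\infty=(1-\rho_n)(0)$. Suppose, for contradiction, that along a subsequence $\rho_n(0)\to 0$, i.e.\ $1-\rho_n(0)\to 1$. Using the pointwise estimate already employed in the proof of Lemma~\ref{lemma:delta}, namely $(1-\rho_n(x))^2\leq 2\|1-\rho_n^2\|_2\,\|\rho_n'\|_2$ evaluated at $x=0$ together with the coercivity bound \eqref{eq:coercive}, one sees $\|\rho_n'\|_2$ stays bounded below; more to the point, I estimate the energy from below. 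On the set where $\rho_n$ is small the potential term $\tfrac14(1-\rho_n^2)^2$ is close to $\tfrac14$, and a quantitative version of the argument used for the bound $E(\bu_{\pi/2},0)=\sqrt8/3$ in Theorem~\ref{thm:scalar} shows that any $u\in\boE(\R)$ with $\inf|u|$ small forces $E(u,0)\geq \sqrt8/3-o(1)$; combined with $-\tfrac{\alpha}{2}\int(1-\rho_n^2)v_n^2\geq -\tfrac{\alpha}{2}\|v_n\|_2^2\geq -\tfrac{\alpha m}{2}-o(1)$ this gives $\liminf E(u_n,v_n)\geq \sqrt8/3-\tfrac{\alpha m}{2}$, contradicting \eqref{eq:smallnesscondm}. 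Under \eqref{eq:smallnesscondq} instead I keep the bright term: completing the square, $\tfrac14(1-\rho_n^2)^2+\tfrac\beta4 v_n^4-\tfrac\alpha2(1-\rho_n^2)v_n^2\geq \tfrac14\bigl(1-\tfrac{\alpha^2}{\beta}\bigr)(1-\rho_n^2)^2$, so $E(u_n,v_n)\geq \bigl(1-\tfrac{\alpha^2}{\beta}\bigr)E(u_n,0)\geq \bigl(1-\tfrac{\alpha^2}{\beta}\bigr)(\sqrt8/3-o(1))$, contradicting \eqref{eq:smallnesscondq}. Hence $\inf_n\rho_n(0)=:\nu'>0$, and by radial monotonicity $\rho_n\geq\nu'$ everywhere; set $\nu=\nu'/2$ (say) to absorb the $o(1)$ error from the symmetrization/almost-minimality reduction. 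This is the main obstacle.

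\emph{Step 2: weak and local convergences.} Given \eqref{eq:nu}, each $u_n$ admits a lifting $u_n=\rho_n e^{i\theta_n}$ with $\theta_n'=\langle iu_n',u_n\rangle/\rho_n^2\in L^2(\R)$, and \eqref{eq:coercive} bounds $\|\rho_n'\|_2$, $\|1-\rho_n^2\|_2$, $\|\rho_n\theta_n'\|_2$, hence also $\|\theta_n'\|_2\leq \nu^{-1}\|\rho_n\theta_n'\|_2$, uniformly; likewise $\|v_n\|_{1,2}$ and $\|v_n\|_4$ are bounded by \eqref{eq:coercive} and $\|v_n\|_2^2\to m$. Passing to a subsequence, $u_n'\wto g$, $\theta_n'\wto h$, $1-\rho_n^2\wto w$ in $L^2(\R)$, and $v_n\wto v$ in $H^1(\R)$ and in $L^4(\R)$. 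Since $\rho_n$ is bounded in $H^1_{\loc}$ and $\theta_n'$ in $L^2$, $u_n$ is bounded in $H^1_{\loc}(\R;\C)$, so by Rellich $u_n\to u$ in $L^\infty_{\loc}$ (the embedding $H^1_{\loc}\hookrightarrow C_{\loc}$ in dimension one) for some $u\in H^1_{\loc}$ with $u'=g$, and similarly $v_n\to v$ in $L^\infty_{\loc}$. Local uniform convergence identifies the weak limits: $1-|u|^2=w$ a.e., so $1-|u|^2\in L^2(\R)$ and \eqref{uconv} holds; $u\in\boE(\R)$. The lower-semicontinuity inequalities \eqref{uconvineq}, \eqref{vconvineq}, \eqref{thetaconv} are weak lower semicontinuity of the $L^2$- and $L^4$-norms; $\|v\|_2^2\leq m$ is weak lower semicontinuity of $\|\cdot\|_2$ against $\|v_n\|_2^2\to m$. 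From \eqref{eq:nu} and local uniform convergence $|u|\geq\nu>0$ everywhere, so $u\in\NE$ and has a lifting $u=|u|e^{i\theta}$ with $\theta'\in L^2$; since $\theta_n'\to\theta'$ a.e.\ on compacts (the phases converge locally uniformly, being $\langle iu_n',u_n\rangle/\rho_n^2$ with $u_n\to u$ in $C_{\loc}$ — more carefully, $\theta_n\to\theta$ in $W^{1,1}_{\loc}$) one identifies the weak $L^2$-limit $h=\theta'$, giving \eqref{thetaconv}.

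\emph{Step 3: the cross term \eqref{uvconvineq}.} Write $\int_\R|u_n|^2v_n^2=\int_\R v_n^2-\int_\R(1-|u_n|^2)v_n^2$. Since $v_n\wto v$ in $L^2$ we have $\liminf\int v_n^2\geq\int v^2$ — actually I should be more careful: $\int v_n^2\to m\geq\int v^2$. For the other piece, $1-|u_n|^2\geq 0$ need not hold in general, but after the symmetrization of Step 1 one has $\rho_n\leq1$, hence $0\leq 1-|u_n|^2$, and $1-|u_n|^2\to 1-|u|^2$ weakly in $L^2$ while $v_n^2\to v^2$ strongly in $L^1_{\loc}$; combined with the uniform $L^2$-bound on $1-|u_n|^2$ and the tightness coming from $v_n$ bounded in $H^1(\R)$ (so $v_n^2$ is tight in $L^1$), one upgrades to $\int(1-|u_n|^2)v_n^2\to\int(1-|u|^2)v^2$. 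Putting the two together yields $\liminf\int|u_n|^2v_n^2\geq \int|u|^2v^2$, which is \eqref{uvconvineq}. \qed
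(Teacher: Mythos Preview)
Your plan has two genuine gaps.

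\textbf{Symmetrization is not allowed here.} The lemma is stated for a \emph{given} almost minimizing sequence $\{(u_n,v_n)\}\subset\boX$, and all conclusions, in particular \eqref{eq:nu} and \eqref{uvconvineq}, are claimed for that sequence. Invoking Lemma~\ref{lemma:sameinf} or Lemma~\ref{lemma:Adecreases} to replace $(u_n,v_n)$ by a symmetrized pair produces a \emph{different} sequence; whatever you prove is then about the symmetrized sequence, not the original one. You seem to use the symmetrized sequence both in Step~1 and again in Step~3 (``after the symmetrization of Step~1 one has $\rho_n\leq1$''), so the confusion is structural, not local. Even if you restricted symmetrization to the contradiction argument in Step~1, the ``quantitative version'' of \eqref{eq:energyblack} you invoke (small $\inf|u|$ forces $E(u,0)\geq\sqrt8/3-o(1)$) is precisely the nontrivial step; it is true, but its proof \emph{is} the paper's argument: translate so that the near-zero is at the origin, extract a weak limit $\tilde u$ with $|\tilde u(0)|=0$, and apply weak lower semicontinuity together with \eqref{eq:energyblack}. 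No symmetrization is needed, and the paper's route gives \eqref{eq:nu} for the original sequence directly.

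\textbf{Step~3 contains an error.} The claim that ``$v_n$ bounded in $H^1(\R)$ (so $v_n^2$ is tight in $L^1$)'' is false: take $v_n=\phi(\cdot-n)$ for any fixed $\phi\in H^1(\R)$. Tightness of $\{v_n^2\}$ is exactly what the concentration--compactness argument in Theorem~\ref{thm:compactness} establishes \emph{after} this lemma, so you cannot use it here. The paper handles \eqref{uvconvineq} in one line: since $|u_n|^2v_n^2\geq0$ and $u_n\to u$, $v_n\to v$ in $L^\infty_{\loc}$ (hence a.e.\ along a subsequence), Fatou's lemma gives $\int|u|^2v^2\leq\liminf\int|u_n|^2v_n^2$. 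Your decomposition $\int|u_n|^2v_n^2=\int v_n^2-\int(1-|u_n|^2)v_n^2$ is unnecessary and, without tightness, cannot be closed.

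In short, the paper's order is: first extract the weak/local limits from the bounds of Lemma~\ref{lemma:energybounded}; then prove \eqref{eq:nu} by a translation-and-limit contradiction against \eqref{eq:energyblack} under either \eqref{eq:smallnesscondq} (via the Young inequality you wrote) or \eqref{eq:smallnesscondm}; then get \eqref{uvconvineq} by Fatou. Your completing-the-square inequality for \eqref{eq:smallnesscondq} and the estimate $E(u_n,v_n)\geq E(u_n,0)-\tfrac{\alpha m}{2}-o(1)$ for \eqref{eq:smallnesscondm} are exactly right; it is the surrounding scaffolding (symmetrization, tightness) that should be removed.
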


{

\begin{remark}
    We underline that hypotheses \eqref{eq:smallnesscondq} and \eqref{eq:smallnesscondm} are only required to prove \eqref{eq:nu}, and then \eqref{thetaconv}, in Lemma~\ref{lemma:compactness}. In some previous works on the Gross--Pitaevskii equation with nonzero conditions at infinity, the lack of lower bounds of type \eqref{eq:nu} has been overcome by introducing different notions of the momentum which take values in $\R\setminus\pi\Z$, and then fixing this new momentum in the minimization argument instead of our renormalized momentum \eqref{eq:momentum}, see \cite{orbitalblack,deLaireGravejatSmets2024,deLaireGravejatSmets2025,MarisMur}. Although allowing the dark component to vanish could potentially yield a more general result without the assumptions \eqref{eq:smallnesscondq} and \eqref{eq:smallnesscondm}, it is not clear how to adapt an approach such as ours, based on symmetric decreasing rearrangements, if liftings of the dark components are not allowed and, in turn, the energy cannot be written in terms of $\rho$ and $\theta'$.
\end{remark}
}

\begin{proof}
Let us write $u_n=\rho_n e^{i\theta_n}$. Since $p(u_n)$ and $E(u_n,v_n)$ depend on $\theta_n$ only through its derivative, there is no loss of generality in assuming that $\theta_n(0)=0$ for all $n$. Recalling that $\|u_n'\|_2^2=\|\rho_n'\|_2^2+\|\rho_n\theta_n'\|_2^2$, it follows from \eqref{eq:coercive} in Lemma~\ref{lemma:delta} that there exists a constant $C>0$, independent of $n$, such that
    \begin{equation}\label{proof:generalest}
	\|u_n'\|_2^2+\|1-\rho_n^2\|_2^2+\|v_n'\|_2^2+\|v_n\|_4^4\leq C.
	\end{equation}
Then, a standard procedure (see \cite{bethuel0,delaire-mennuni}) leads to the existence of $u\in \boE(\R)$ such that, up to a subsequence, \eqref{uconv} and \eqref{uconvineq} hold. Analogously, there exists $v\in H^1(\R)$ such that, up to a subsequence, \eqref{vconv} and \eqref{vconvineq} hold too. Furthermore, \eqref{uvconvineq} follows from \eqref{uconv}, \eqref{vconv} and Fatou lemma.

Let us prove \eqref{eq:nu}. To do so, assume by contradiction that there exists a sequence of real numbers $\{x_n\}$ such that, up to a subsequence, $|u_n(x_n)|\to 0$ as $n\to\infty$. Consider the translated functions $\tilde u_n=u_n(\cdot+x_n)$ and $\tilde v_n=v_n(\cdot+x_n)$. Clearly, $\{(\tilde u_n,\tilde v_n)\}\subset\boX$ satisfies \eqref{eq:almostminimizingseq} too. Then, as above, there exists $\tilde u\in\boE(\R)$ such that $\tilde u_n'\rightharpoonup\tilde u'$ in $L^2(\R)$ and $\tilde u_n\to\tilde u$ in $L^\infty_{\loc}(\R)$. In particular, $|\tilde u(0)|=0$. 

Assume in the first place \eqref{eq:smallnesscondq}. Then, Young inequality implies
\begin{equation}\label{eq:young}
\frac14(1-|\tilde u_n|^2)^2 + \frac{\beta}{4}\tilde v_n^4 -\frac{\alpha}{2}(1-|\tilde u_n|^2)\tilde v_n^2 \geq \frac14\Big(1-\frac{\alpha^2}{\beta}\Big)(1-|\tilde u_n|^2)^2 \geq 0,
\end{equation}
so that 
\[\Big(1-\frac{\alpha^2}{\beta}\Big)E(\tilde u_n,0)\leq E(\tilde u_n,\tilde v_n).\]
Thus,  Fatou lemma, the weak lower semicontinuity of the $L^2$ norm, \eqref{eq:almostminimizingseq}, and \eqref{eq:smallnesscondq}, yield
\[\Big(1-\frac{\alpha^2}{\beta}\Big)E(\tilde u,0)\leq \Emin(q,m)< \Big(1-\frac{\alpha^2}{\beta}\Big)\frac{\sqrt{8}}{3}.\]
Thus, \eqref{eq:energyblack} implies that $|\tilde u|>0$ in $\R$, a contradiction with $|\tilde u(0)|=0$.

On the other hand, assume that \eqref{eq:smallnesscondm} holds. It is then obvious that
\[E(\tilde u_n,0)\leq E(\tilde u_n,\tilde v_n)+\frac{\alpha}{2}m_n,\]
where $m_n=\|v_n\|_2^2$. Passing to the limit using \eqref{eq:almostminimizingseq}, we derive
\[E(\tilde u,0)\leq \Emin(q,m)+\frac{\alpha}{2}m<\frac{\sqrt{8}}{3},\]
again a contradiction with the fact that $|\tilde u(0)|=0$.

As a consequence of \eqref{eq:nu} and \eqref{eq:coercive}, we deduce that $\{\theta_n'\}$ is bounded in $L^2(\R)$, so that there exists $\zeta\in L^2(\R)$ such that, up to a subsequence, 
\begin{equation}\label{zetaconv}
\theta_n'\rightharpoonup \zeta,\quad\text{in }L^2(\R),\quad \|\zeta\|_2\leq\liminf_{n\to\infty}\|\theta_n'\|_2.
\end{equation}
Let us show that $\zeta=\theta'$. Indeed, applying \eqref{zetaconv}, and recalling that $\theta_n(0)=0$, we deduce
\[\theta_n(x)=\int_0^x \theta_n'(y)\d y=\int_\R\theta_n'(y)\chi_{[x^-,x^+]}(y)\d y\to \int_0^x\zeta(y)\d y,\]
for a.e. $x\in\R$, where $x^-=\min\{x,0\}$ and $x^+=\max\{x,0\}$, and $\chi$ is the indicator function. Since we also have that $|u_n(x)|\to|u(x)|\geq\nu$ and $u_n(x)\to u(x)=|u(x)| e^{i\theta(x)}$ for a.e. $x\in\R$, we conclude that $e^{i\theta(x)}=e^{i\int_0^x\zeta(y)\d y}$. That is, $\theta(x)=\int_0^x\zeta(y)\d y+2k\pi$ for some $k\in\mathbb{Z}$, and thus, $\theta'=\zeta$. This proves \eqref{thetaconv}.
\end{proof}

Next theorem leads to the compactness of {sequences satisfying \eqref{eq:almostminimizingseq}} and, in turn, to the existence of a constrained energy minimizer.

\begin{theorem}\label{thm:compactness}
    Let $\alpha>0$ and $\beta > 0$ satisfy $\alpha^2\leq\beta$ and let $q\in (0,\pi/2)$ and $m>0$. Assume that either \eqref{eq:smallnesscondq} or \eqref{eq:smallnesscondm}  holds. Let $\{(u_n,v_n)\}\subset\boX$ satisfy \eqref{eq:almostminimizingseq}, as well as
    \begin{equation}\label{eq:momentumconv}
        p(u_n)\to q,\quad\text{as }n\to\infty.
    \end{equation}
    Then, there exist $(u,v)\in\boX_{q,m}$ and $\{x_n\}\subset\R$ such that, up to a subsequence, 
\begin{equation}\label{eq:strongconv}
1-|u_n(\cdot + x_n)|^2\to 1-|u|^2,\text{ in }L^2(\R),\quad v_n(\cdot+x_n)\to v,\text{ in }L^2(\R),
\end{equation}
and $E(u,v)=\Emin(q,m)$. 
\end{theorem}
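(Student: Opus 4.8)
The plan is to run the Concentration--Compactness principle of Lions on the sequence of densities associated to $\{(u_n,v_n)\}$, using the subadditivity of $\Emin$ (Theorem~\ref{thm:properties}~\ref{item:subadd}) to exclude dichotomy and Lemma~\ref{lemma:delta} to exclude vanishing. Concretely, by Lemma~\ref{lemma:Adecreases} and Lemma~\ref{lemma:sameinf} there is no loss of generality in assuming $|u_n|\le 1$ and, after a further symmetrization/mollification, that $\{(u_n,v_n)\}\subset\boX^0_{q,m}$ (or at least that $\rho_n\le 1$, so that $p(u_n)=Q(u_n)$ and all the rearrangement identities apply cleanly). Writing $u_n=\rho_n e^{i\theta_n}$, I would introduce the nonnegative mass density
\[
e_n = (\rho_n')^2 + (1-\rho_n^2)^2 + \rho_n^2(\theta_n')^2 + (v_n')^2 + v_n^4 + v_n^2,
\]
whose total integral $\ell_n=\int_\R e_n\d x$ is bounded by \eqref{eq:coercive} and converges (along a subsequence) to some $\ell>0$ (positivity of $\ell$ follows from Lemma~\ref{lemma:delta}, which gives $\|\rho_n'\|_2>\delta$ and $\|v_n\|_4>\delta$). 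Apply Lions' lemma to $\{e_n\}$: either (vanishing) $\sup_{y}\int_{y-R}^{y+R}e_n\to 0$ for every $R$, or (dichotomy) the mass splits into two pieces carried off to infinity from each other, or (compactness) there are $\{x_n\}$ such that the translated densities are tight.

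For \textbf{vanishing}: the total-mass bound plus vanishing forces $\|v_n\|_\infty\to 0$ (since $\|v_n\|_{L^\infty(I)}^2\lesssim \int_I((v_n')^2+v_n^2)$ on unit intervals and vanishing makes this uniformly small), hence $\|v_n\|_4^4\le\|v_n\|_\infty^2\|v_n\|_2^2\to 0$, contradicting $\|v_n\|_4>\delta$ from Lemma~\ref{lemma:delta}. (One could equivalently appeal to Remark~\ref{remark:vanishing}.) For \textbf{dichotomy}: split each $(u_n,v_n)$ using cutoffs into $(u_n^{(1)},v_n^{(1)})$ and $(u_n^{(2)},v_n^{(2)})$ supported on two regions whose mutual distance tends to infinity, with $p(u_n^{(j)})\to q_j$, $\|v_n^{(j)}\|_2^2\to m_j$, $q_1+q_2=q$, $m_1+m_2=m$, $(q_j,m_j)\neq(q,m)$ for each $j$ (we may assume $q_1>0$ or $q_2>0$, else reduce to pure $m$-splitting where \eqref{eq:energeticallyfav} already contradicts), and
\[
E(u_n,v_n) \ge E(u_n^{(1)},v_n^{(1)}) + E(u_n^{(2)},v_n^{(2)}) - o(1)
\]
because the cross terms in the energy are supported in the thin transition annulus where the density is negligible — here I must check carefully that the renormalized phase term $\rho_n^2(\theta_n')^2$ and the quartic/coupling terms localize without generating uncontrollable errors, adjusting the cutoff radii à la the usual ``pigeonhole'' choice so that $\|\nabla(\text{cutoff})\|$-type contributions vanish. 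Passing to the $\liminf$ and using $\Emin(q_j,m_j)\le\liminf E(u_n^{(j)},v_n^{(j)})$ gives $\Emin(q,m)\ge\Emin(q_1,m_1)+\Emin(q_2,m_2)$, which combined with \eqref{eq:subadditive} forces equality, contradicting the strict-subadditivity clause since $(q_1+q_2)(q_1+m_1)(q_2+m_2)=q\cdot(q_1+m_1)(q_2+m_2)\ne 0$. So only \textbf{compactness} survives.

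In the compactness case, let $\{x_n\}$ be the concentration points and apply Lemma~\ref{lemma:compactness} to the translated sequence $(\tilde u_n,\tilde v_n)=(u_n(\cdot+x_n),v_n(\cdot+x_n))$ to extract a limit $(u,v)\in\boE(\R)\times H^1(\R)$ with $u=|u|e^{i\theta}\in\NE$, together with all the weak/local convergences and lower-semicontinuity inequalities \eqref{uconv}--\eqref{thetaconv}. Tightness of the density upgrades the $L^2_{\loc}$ convergences to global strong convergence $1-\tilde\rho_n^2\to 1-|u|^2$ in $L^2(\R)$ and $\tilde v_n\to v$ in $L^2(\R)$ and in $L^4(\R)$ (the tails of $\int e_n$ being uniformly small), which in particular yields $\|v\|_2^2=m$ and $p(u)=\lim p(\tilde u_n)=q$ using \eqref{eq:momentumconv} (the momentum passes to the limit because it pairs the strongly convergent $1-\tilde\rho_n^2$ against the weakly convergent $\theta_n'$). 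Hence $(u,v)\in\boX_{q,m}$, so $E(u,v)\ge\Emin(q,m)$. For the reverse inequality, write $E(\tilde u_n,\tilde v_n)$ as a sum of the six nonnegative-ish pieces and apply weak lower semicontinuity term by term: $\|u'\|_2^2\le\liminf$, $\|1-|u|^2\|_2^2\le\liminf$ (now an equality by strong $L^2$ convergence), $\|\theta'\|_2^2\le\liminf$ hence $\||u|\theta'\|_2^2\le\liminf\|\tilde\rho_n\tilde\theta_n'\|_2^2$ (using \eqref{eq:nu} to control $\tilde\rho_n$ from below and the strong convergence of $1-\tilde\rho_n^2$), $\|v'\|_2^2\le\liminf$, $\|v\|_4^4=\lim$, and $-\tfrac\alpha2\int(1-|u|^2)v^2=\lim$ by strong convergence of both factors; this gives $E(u,v)\le\liminf E(\tilde u_n,\tilde v_n)=\Emin(q,m)$, so equality holds and $(u,v)$ is the desired minimizer, establishing \eqref{eq:strongconv}.

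The main obstacle I anticipate is the dichotomy step, specifically making the energy splitting $E(u_n,v_n)\ge E(u_n^{(1)},v_n^{(1)})+E(u_n^{(2)},v_n^{(2)})-o(1)$ rigorous for the \emph{renormalized} functional: unlike in standard $H^1$ problems, $u_n$ does not vanish at infinity, the phase $\theta_n$ is only defined up to an additive constant on each piece, and the term $(1-\rho_n^2)^2$ is quartic in the ``excess'' $1-\rho_n$, so the cutoff must be performed at the level of the lifting ($1-\rho_n$ and $\theta_n'$) rather than of $u_n$ itself, and one must verify that gluing the two pieces back (with a suitable rescaling $\gamma_n\in(0,1]$ of the phase, exactly as in the proof of Theorem~\ref{thm:properties}~\ref{item:subadd}) does not increase the energy by more than $o(1)$. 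A secondary subtlety is ensuring that after the reduction to $\boX^0_{q,m}$ the constraint values $p(u_n)$ and $\|v_n\|_2^2$ are exactly $q$ and $m$ (not merely asymptotically), which the renormalization in Lemma~\ref{lemma:sameinf} handles; alternatively one keeps them only asymptotically equal and corrects the limit by the Lipschitz continuity \ref{item:lipschitz} of $\Emin$.
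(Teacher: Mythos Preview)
Your overall strategy---concentration--compactness on a density, strict subadditivity (Theorem~\ref{thm:properties}~\ref{item:subadd}) to rule out dichotomy, and Lemma~\ref{lemma:delta} to rule out vanishing---is exactly the paper's, and your observation that the cutoffs must be performed at the lifting level (on $1-\rho_n$, $\theta_n'$, $v_n$) is precisely what the paper does. However, the opening move ``by Lemma~\ref{lemma:Adecreases} and Lemma~\ref{lemma:sameinf} there is no loss of generality in assuming $\{(u_n,v_n)\}\subset\boX^0_{q,m}$'' is not permissible here: the conclusion \eqref{eq:strongconv} concerns the \emph{given} sequence, and replacing it by a symmetrized one destroys that. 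The paper does \emph{not} symmetrize in this proof; it works directly with arbitrary $(u_n,v_n)\in\boX$. The rearrangement machinery you are trying to invoke was needed only to prove strict subadditivity (which is now a black box); nothing in the compactness argument requires $\rho_n\le 1$ or any rearrangement identity. Relatedly, your anticipated obstacle of ``gluing the two pieces back with a rescaling $\gamma_n$ of the phase'' is misplaced: that manoeuvre belonged to the construction of competitors in the subadditivity proof, whereas in the dichotomy step one merely cuts off and uses $E(u_n^{(j)},v_n^{(j)})\ge\Emin(q_n^{(j)},m_n^{(j)})$ directly---no gluing, no rescaling.

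A second, subtler gap lies in your dichotomy exclusion. Lions' dichotomy gives two pieces with positive \emph{density} mass $\ell_1,\ell_2>0$, but this does not by itself force $(q_j+m_j)>0$ for both $j$; a piece could carry kinetic/potential density yet have $q_j=m_j=0$, in which case strict subadditivity yields no contradiction. The paper sidesteps this by not invoking the trichotomy: it performs a single splitting via the concentration function, obtains $\Emin(q,m)\ge\Emin(q^{(1)},m^{(1)})+\Emin(q^{(2)},m^{(2)})$, and concludes from \ref{item:subadd} that one pair $(q^{(j)},m^{(j)})$ must vanish. The case $(q^{(1)},m^{(1)})=(0,0)$ is then shown to force $E(u_n^{(1)},v_n^{(1)})\to 0$ (via the energy identity and $E(u_n^{(2)},v_n^{(2)})\ge\Emin(q,m)-o(1)$), hence $F_n(t_n)\to 0$, hence vanishing of the whole density, which contradicts $\|v_n\|_4>\delta$; the case $(q^{(2)},m^{(2)})=(0,0)$ gives tightness directly. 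This unified treatment is what you should aim for. (Note also that the paper, like you, appeals to Lemma~\ref{lemma:delta} for a sequence not literally in $\boX^0_{q,m}$; the point is that the proof of the bound $\|v_n\|_4>\delta$ uses only \eqref{eq:coercive} and \eqref{eq:energeticallyfav}, not the symmetry.)
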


\begin{remark}\label{remark:vanishing}
    We emphasize that, in the proof of Theorem~\ref{thm:compactness}, the fact that $m>0$ allows to apply Lemma~\ref{lemma:delta}, which is the key to rule out vanishing. Naturally, the result is also true for $m=0$ (with $v\equiv 0$), even though the {proof of the compactness} in this case differs from ours, see \cite{bethuel0}.
\end{remark}

\begin{remark}\label{remark:stability}
    We also note that the compactness of {sequences satisfying \eqref{eq:almostminimizingseq}--\eqref{eq:momentumconv}} is often a powerful tool in establishing orbital stability, following the approach of \cite{cazlions}. Although this method is not directly applicable in our setting, since the fixed quantity $p(u)$ is not conserved along the flow in general, we have chosen to state and prove the result in the broader context of {sequences satisfying \eqref{eq:almostminimizingseq}--\eqref{eq:momentumconv}}, rather than restricting ourselves to truly minimizing sequences, in order to emphasize that this step does not present an obstacle to proving stability. 
\end{remark}

{
\begin{remark}
    We point out that, in the proof of Theorem~\ref{thm:compactness}, the condition $\alpha^2\leq \beta$ is used only to have the strict inequality in \eqref{eq:subadditive}.
\end{remark}
}

\begin{proof}[Proof of Theorem~\ref{thm:compactness}]
By Lemma~\ref{lemma:compactness}, there exists $(u,v)\in\boX$ satisfying \eqref{uconv}--\eqref{uvconvineq}. Therefore,
    \begin{equation}\label{proof:fatou}
        E(u,v)+\frac{\alpha}{2}\|v\|_2^2\leq \liminf_{n\to\infty} \Big(E(u_n,v_n)+\frac{\alpha}{2}\|v_n\|_2^2\Big)=\Emin(q,m)+\frac{\alpha m}{2}.
    \end{equation}
Thus, $E(u,v)= \Emin(q,m)$ provided that $(u,v)\in\boX_{q,m}$. We thus claim that $(u,v)\in\boX_{q,m}$, i.e. $p(u)=q$ and $\|v\|_2^2=m$. The proof follows by a concentration--compactness argument as in \cite{alhelou}{, which in turn is based on the classical method in \cite{CCPLionsI} (see also \cite{cazenave})}. Indeed, for every $n$, let us denote $u_n=\rho_n e^{i\theta_n}$ and consider the function
    \[f_n=\frac12(\rho_n')^2+\frac12\rho_n^2(\theta_n')^2+\frac14(1-\rho_n^2)^2+\frac12(v_n')^2+\frac{\beta}{4}v_n^4+\frac{\alpha}{2}(\rho_n^2+1) v_n^2.\]
    Clearly, $f_n\in L^1(\R)$ with $\|f_n\|_1=E(u_n,v_n)+\alpha m$ and, by \eqref{eq:coercive}, 
    $\{\|f_n\|_1\}$ is bounded. Passing to a not relabeled subsequence, there exists $\mu\geq 0$ such that $\|f_n\|_1\to\mu$ as $n\to\infty$. Actually, since $f_n\geq \alpha v_n^2/2$, it follows that $\|f_n\|_1\geq \alpha m/2$, so that $\mu\geq \alpha m/2>0$.
    
    Let us consider the concentration function
    \[F_n(t)=\sup_{y\in\R}\int_{y-t}^{y+t}f_n(x)\d x,\quad t\geq 0.\]
    It is standard to show that there exists a nondecreasing function $F$ and a real number $\mu_0$ such that, passing again to a not relabeled subsequence, 
    \[\lim_{n\to\infty}F_n(t)=F(t),\quad\lim_{t\to\infty}F(t)=\mu_0.\]
    Since $0\leq F_n(t)\leq\|f_n\|_1$, it clearly follows that $0\leq\mu_0\leq\mu$. Moreover, there exist sequences $\{x_n\}\subset\R$ and $\{t_n\}\subset (0,\infty)$ such that $t_n\to\infty$ and 
    \[\lim_{n\to\infty}F_n(t_n)=\lim_{n\to\infty}F_n(8t_n)=\mu_0,\quad F_n(t_n)=\int_{x_n-t_n}^{x_n+t_n}f_n(x)\d x.\]
By applying the translation $x\mapsto x+x_n$, we may assume without loss of generality that $x_n=0$ for all $n$. Let us consider a function $\varphi\in\boC_c^\infty(\R)$ such that 
    \[\varphi(x)=1,\text{ for }|x|\leq 1,\quad\varphi(x)=0,\text{ for }|x|\geq 2,\quad 0<\varphi(x)< 1,\text{ for } 1<|x|<2.\]
    For every $n$, let us define the functions
    \[1-\rho_n^{(1)}(x)=\varphi\Big(\frac{x}{t_n}\Big)(1-\rho_n(x)),\quad \zeta_n^{(1)}(x)=\varphi\Big(\frac{x}{t_n}\Big)\theta_n'(x),\quad v_n^{(1)}(x)=\varphi\Big(\frac{x}{t_n}\Big)v_n(x),\]
    and let $\theta_n^{(1)}$ be a primitive of $\zeta_n^{(1)}$. Analogously,
    \begin{align*}
        1-\rho_n^{(2)}(x)=\Big(1-\varphi\Big(\frac{x}{4t_n}\Big)\Big)(1-\rho_n(x)),\quad &\zeta_n^{(2)}(x)=\Big(1-\varphi\Big(\frac{x}{4t_n}\Big)\Big)\theta_n'(x),
        \\
        v_n^{(2)}(x)=\Big(1-\varphi\Big(\frac{x}{4t_n}\Big)\Big)v_n(x),\quad &\theta_n^{(2)}\text{ a primitive of }\zeta_n^{(2)}.
    \end{align*}
    Notice that
    \begin{equation}\label{eeq1}
    \supp(1-\rho_n^{(1)})\subset [-2t_n,2t_n],\quad\supp(1-\rho_n^{(2)})\subset\R\setminus(-4t_n,4t_n),
    \end{equation}
    and the same holds for $\zeta_n^{(j)}$ and $v_n^{(j)}$ with $j=1,2$. Also, observe that
    \begin{equation}\label{eq:max}
        \rho_n^{(j)}\leq\max\{1,\rho_n\},\quad|\zeta_n^{(j)}|\leq|\theta_n'|,\quad |v_n^{(j)}|\leq|v_n|,\quad\text{for }j=1,2.
    \end{equation}
    
    For $j=1,2$, let us  consider the functions $u_n^{(j)}=\rho_n^{(j)}e^{i\theta_n^{(j)}}$. We claim that
    \begin{equation}\label{claim:dichotomy}
    \lim_{n\to\infty}\Big|E(u_n,v_n)-E(u_n^{(1)},v_n^{(1)})-E(u_n^{(2)},v_n^{(2)})\Big|=0.
    \end{equation}
    To prove the claim, we will provide the details only for two different terms in the limit, as the rest can be dealt with analogously.

    Thus, taking \eqref{eeq1} into account, we derive    \begin{align}\label{eq:decompositionCCL}
    \int_\R &\left|\rho_n^2(\theta_n')^2  -(\rho_n^{(1)})^2((\theta_n^{(1)})')^2-(\rho_n^{(2)})^2((\theta_n^{(2)})')^2\right|\d x = \int_{\{2t_n<|x|<4t_n\}}\rho_n^2(\theta_n')^2\d x
        \\
        \nonumber & + \int_{\{t_n<|x|<2t_n\}}\left|\rho_n^2(\theta_n')^2-(\rho_n^{(1)})^2((\theta_n^{(1)})')^2\right|\d x 
        \\
        \nonumber &+  \int_{\{4t_n<|x|<8t_n\}}\left|\rho_n^2(\theta_n')^2-(\rho_n^{(2)})^2((\theta_n^{(2)})')^2\right|\d x.
    \end{align}
    In order to estimate the right--hand side of \eqref{eq:decompositionCCL}, using \eqref{eq:max}, one has 
    \[(\rho_n^{(j)})^2\leq\rho_n^2+1,\quad j=1,2,\]
    and therefore, by \eqref{eq:nu}, it follows 
    \begin{align*}
        |\rho_n^2(\theta_n')^2-(\rho_n^{(j)})^2((\theta_n^{(j)})')^2|\leq (1+2\rho_n^2)(\theta_n')^2\leq \left(\frac{1}{\nu^2}+2\right)\rho_n^2(\theta_n')^2,
    \end{align*}
    for $j=1,2$. Thus, from \eqref{eq:decompositionCCL}, we deduce
    \begin{align*}
    &\int_\R\left|\rho_n^2(\theta_n')^2-(\rho_n^{(1)})^2((\theta_n^{(1)})')^2-(\rho_n^{(2)})^2((\theta_n^{(2)})')^2\right|\d x \leq \left(\frac{1}{\nu^2}+2\right)\int_{\{t_n<|x|<8t_n\}}\rho_n^2(\theta_n')^2\d x
        \\
        &\leq  2\left(\frac{1}{\nu^2}+2\right)\int_{\{t_n<|x|<8t_n\}} f_n \d x=2\left(\frac{1}{\nu^2}+2\right)(F_n(8t_n)-F_n(t_n))\to 0,\quad\text{as }n\to\infty.
    \end{align*}
    
    On the other hand, simple computations lead to
    \begin{align*}
        (\rho_n^{(1)})'(x)&=-\frac{1}{t_n}\varphi'\left(\frac{x}{t_n}\right)(1-\rho_n(x))+\varphi\left(\frac{x}{t_n}\right)\rho_n'(x),
        \\(\rho_n^{(2)})'(x)&=\frac{1}{4t_n}\varphi'\left(\frac{x}{4t_n}\right)(1-\rho_n(x))+\left(1-\varphi\left(\frac{x}{4t_n}\right)\right)\rho_n'(x).
    \end{align*}
    Thus, omitting the variable dependence, and recalling that $|t_n|\geq 1$, we derive
    \begin{align*}
        |(\rho_n')^2-((\rho_n^{(1)})')^2|&=\left|(\rho_n')^2-\frac{1}{t_n^2}(\varphi')^2(1-\rho_n)^2-\varphi^2(\rho_n')^2+\frac{2}{t_n}\varphi'\varphi(1-\rho_n)\rho_n'\right|
    \\
    &\leq C\Big((\rho_n')^2+(1-\rho_n)^2\Big)\leq C\Big((\rho_n')^2+(1-\rho_n^2)^2\Big),
    \end{align*}
    for a large enough constant $C>0$ depending on $\|\varphi'\|_\infty$. Obviously, a similar estimate holds changing $\rho_n^{(1)}$ with $\rho_n^{(2)}$. Therefore, bearing in mind \eqref{eeq1}, we deduce
    \begin{align*}
        \int_\R&\Big|(\rho_n')^2-((\rho_n^{(1)})')^2-((\rho_n^{(2)})')^2\Big|\d x\leq C\int_{\{t_n<|x|<8t_n\}}\Big((\rho_n')^2+{(1-\rho_n^2)^2}\Big)\d x
        \\
        &\leq C\int_{\{t_n<|x|<8t_n\}} f_n \d x=C(F_n(8t_n)-F_n(t_n))\to 0,\quad\text{as }n\to\infty.
    \end{align*}
    Arguing similarly for the remaining terms, we conclude that \eqref{claim:dichotomy} holds.
    
    As a consequence,
    \begin{align}
        \nonumber E(u_n,v_n)&=E(u_n^{(1)},v_n^{(1)}) + E(u_n^{(2)},v_n^{(2)}) + a_n
        \\
        \label{ineqdichotomy}
        &\geq \Emin(q_n^{(1)},m_n^{(1)}) + \Emin(q_n^{(2)},m_n^{(2)}) + a_n,
    \end{align}
    where $\lim_{n\to\infty}a_n=0$, $q_n^{(j)}=p(u_n^{(j)})$, and $m_n=\|v_n^{(j)}\|_2^2$, for $j=1,2$. Observe that the sequences $\{q_n^{(j)}\}$ and $\{m_n^{(j)}\}$ are bounded, so they converge respectively to $q^{(j)}$ and $m^{(j)}$, up to a not relabeled subsequence. Moreover, arguing as for the proof of \eqref{claim:dichotomy}, we derive
    \[\lim_{n\to\infty}\Big|p(u_n)-p(u_n^{(1)})-p(u_n^{(2)})\Big|=0,\quad \lim_{n\to\infty}\Big|\|v_n\|_2^2-\|v_n^{(1)}\|_2^2-\|v_n^{(2)}\|_2^2\Big|=0,\quad j=1,2.\]
    It is thus clear that $q=q^{(1)}+q^{(2)}$ and $m=m^{(1)}+m^{(2)}$.
    
    Recalling now that $\Emin$ is Lipschitz by Theorem~\ref{thm:properties}, and thus continuous, we pass to the limit in \eqref{ineqdichotomy} and obtain
    \[\Emin(q,m)\geq \Emin(q^{(1)},m^{(1)}) + \Emin(q^{(2)},m^{(2)}).\]
    Invoking item \ref{item:subadd} in Theorem~\ref{thm:properties}, we deduce that  $q^{(j)}=m^{(j)}=0$ for either $j=1$ or $j=2$.

    Assume by contradiction that $q^{(1)}=m^{(1)}=0$. Then we have  $\lim_{n\to\infty}E(u_n^{(1)},v_n^{(1)})=0$ and $\lim_{n\to\infty}m_n^{(1)}=0$. Since, from \eqref{eeq1},     
    \[F_n(t_n)=\int_{-t_n}^{t_n}f_n\d x\leq E(u_n^{(1)},v_n^{(1)})+\alpha m_n^{(1)},\]
    it follows that $\lim_{n\to\infty}F_n(t_n)=0$. That is, $\mu_0=0$. Then, the fact that $F_n$ is nonnegative and nondecreasing implies that $F\equiv 0$, namely, vanishing occurs. In particular, 
    \[\lim_{n\to\infty}\sup_{y\in\R}\int_{y-t}^{y+t} ((v_n')^2+v_n
    ^2)\d x=0,\quad\text{for all }t\geq 0.\]
    As a consequence, it is an exercise to prove that
    \[\lim_{n\to\infty}\int_\R (v_n'(x+y_n)\phi'(x)+v_n(x+y_n)\phi(x))\d x= 0,\quad\text{for all }\phi\in\boC_0^\infty(\R),\text{ and all }\{y_n\}\subset\R.\]
    Then, \cite[Lemma~{1.2}]{tintarev-fieseler} implies that  
    \[v_n(\cdot+y_n)\rightharpoonup 0,\quad\text{in }H^1(\R),\quad\text{for every sequence }\{y_n\}\subset\R.\]
    Therefore, applying \cite[Lemma~{3.3}]{tintarev-fieseler}, we conclude that $v_n\to 0$ in $L^4(\R)$. This contradicts Lemma~\ref{lemma:delta}.

   Thus, necessarily $q^{(2)}=m^{(2)}=0$. In this case, by \eqref{eeq1} we have
   \[\int_{\R\setminus(-8t_n,8t_n)}f_n\d x=\|f_n\|_1 - F_n(8t_n)\leq E(u_n^{(2)},v_n^{(2)})+\alpha m_n^{(2)}.\]
   Therefore, 
   \[\lim_{n\to\infty}\int_{\R\setminus(-8t_n,8t_n)}f_n\d x=0.\]
   {Equivalently, $\mu_0=\mu$. }Then, given $\varepsilon>0$, a standard procedure {(see for instance Step 1 of the proof of \cite[Proposition~{1.7.6}]{cazenave})} provides some $t_\varepsilon>0$ such that
   \[\int_{\R\setminus(-t_{\varepsilon},t_{\varepsilon})}f_n\d x<\varepsilon,\]
   for all $n$ large enough. In particular,
   \[m-\varepsilon<\int_{-t_\varepsilon}^{t_\varepsilon}v_n^2\d x.\]
   Using the local uniform convergence in \eqref{vconv} and also \eqref{vconvineq}, we deduce
   \[m-\varepsilon\leq \int_{-t_\varepsilon}^{t_\varepsilon}v^2\d x\leq \|v\|_2^2\leq m.\]
   Since $\varepsilon$ is arbitrary, we conclude that $\|v\|_2^2=m$.

   Analogously, 
   \[\|1-\rho_n^2\|_2^2-\varepsilon<\int_{-t_\varepsilon}^{t_\varepsilon}(1-\rho_n(x)^2)^2\d x.\]
   Using the local uniform convergence in \eqref{uconv} and also \eqref{uconvineq}, we derive
   \[\limsup_{n\to\infty}\|1-\rho_n^2\|_2^2-\varepsilon\leq\int_{-t_\varepsilon}^{t_\varepsilon}(1-\rho(x)^2)^2\d x\leq \liminf_{n\to\infty}\|1-\rho_n^2\|_2^2.\]
   Since $\varepsilon$ is arbitrary, we conclude that $\lim_{n\to\infty}\|1-\rho_n^2\|_2=\|1-\rho^2\|_2$. This proves \eqref{eq:strongconv}. In particular, there exists $h\in L^2(\R)$ such that, up to a subsequence,
   \[\rho_n\to \rho,\quad |1-\rho_n^2|\leq h,\quad\text{a.e. in }\R.\]
   Consequently, the continuity of $G$ implies that 
   \[G(|1-\rho_n|)\to G(|1-\rho|),\quad\text{a.e. in }\R.\]
   Moreover, from \eqref{eq:recastmomentum}, we get
   \[|G(|1-\rho_n|)|\leq |1-\rho_n^2|+4|1-\rho_n|=\left(1+\frac{4}{1+\rho_n}\right)|1-\rho_n^2|\leq 5|1-\rho_n^2|\leq 5h.\]
   Thus, the dominated convergence theorem implies that 
   \[G(|1-\rho_n|)\to G(|1-\rho|),\quad\text{in }L^2(\R),\]
   and then, from \eqref{thetaconv}, we get $p(u_n)\to p(u)$. Hence, $p(u)=q$. As a result, $(u,v)\in \boX_{q,m}$, as we claimed.
\end{proof}

\section{Proof of the main result}\label{sec:proof}

\begin{proof}[Proof of Theorem~\ref{thm:mainresult}] Let $\{(u_n,v_n)\}\subset\boX_{q,m}^0$ be a minimizing sequence, i.e. \eqref{eq:minimizingseq} holds. Theorem~\ref{thm:compactness} yields the existence of $(u,v)\in\boX_{q,m}$ such that  $E(u,v)=\Emin(q,m)$. Furthermore, the symmetry and monotonicity of $1-|u|$ and $v$ are inherited from $1-|u_n|$ and $v_n$ via pointwise convergence. 

Let us show that $(u,v)$ solves \eqref{TWS} for some $c,\lambda\in\R$. Indeed, let us denote $w=u(1-|u|^2)/|u|^2$. It is easy to see that $w\in H^1(\R)$ and
    \[(p'(u)|w)=\int_\R\langle iu',u\rangle\frac{1-|u|^2}{|u|^2}\d x=-2p(u)=-2q\not=0.\]
    Let $z\in H^1(\R)$ be such that $(p'(u)|z)=0$, i.e. $z\in\ker(p'(u))$, and consider the function
    \[f(s,t)=p(u+sw+tz),\quad s,t\in\R,\]
    so that 
    \[f(0,0)=q,\quad \frac{\partial f}{\partial s}(0,0)=(p'(u)|w)\not=0,\quad\frac{\partial f}{\partial t}(0,0)=(p'(u)|z)=0.\]
    The implicit function theorem yields the existence of $\delta>0$ and a function $g:(-\delta,\delta)\to\R$ such that 
    \[g(0)=0,\quad g'(0)=0,\quad f(g(t),t)=q,\text{  for all }t\in(-\delta,\delta).\]
    On the other hand, we have that $0<|u+g(t)w+tz|<2$ for all small enough $t\in (-\delta,\delta)$. Thus, $u+g(t)w+tz\in\boX_{q,m}$ for all small enough $t\in (-\delta,\delta)$, and thus, the function $t\mapsto E(u+g(t)w+tz,v)$ achieves a local minimum at $t=0$. Consequently, 
    \[\left(\frac{\partial E}{\partial u}(u,v)|z\right)=0.\] 
    
    Now, let $\phi\in H^1(\R)$. From the decomposition $H^1(\R)=\ker(p'(u))\bigoplus \R w$, we have that $\phi=z+sw$ for some $z\in\ker(p'(u))$ and $s\in\R$. Therefore,
    \[\Big(\frac{\partial E}{\partial u}(u,v)|\phi\Big)=s\Big(\frac{\partial E}{\partial u}(u,v)|w\Big),\quad (p'(u)|\phi)=s(p'(u)|w).\]
    Thus, we conclude that
    \[\frac{\partial E}{\partial u}(u,v)=cp'(u),\quad\text{where }c=\Big(\frac{\partial E}{\partial u}(u,v)|w\Big)(p'(u)|w)^{-1}.\]
    Arguing similarly for the second component, we also derive
    \[\frac{\partial E}{\partial v}(u,v)=\lambda v,\quad\text{where }\lambda=\Big(\frac{\partial E}{\partial v}(u,v)|v\Big)\|v\|_2^{-2}.\]
    This completes the proof of the fact that $(u,v)$ is a solution to \eqref{TWS}, with $\lambda,c\in\R$ being Lagrange multipliers.

    We finish the proof by showing \eqref{eq:mulambdaestimates}. Indeed, since $(u,v)$ is a solution to \eqref{TWS}, arguing as in \cite{Maris2006} or \cite{dLMar2022}, it follows that $(|u|,v)=(\rho,v)$ solves 
    \begin{equation}\label{eq:mulambda}
    \begin{cases}
    \displaystyle -\rho'' + \frac{c^2 (1-\rho^4)}{4\rho^3}=(1-\rho^2-\alpha v^2)\rho, &x\in\R,
    \\
    -v''=(\lambda - \alpha \rho^2 - \beta v^2)v, &x\in\R,
    \end{cases}
    \end{equation}
{and moreover, $2\theta'=c(\rho^{-2}-1),$ where $u=\rho e^{i\theta}$.} Arguing by contradiction, assume that there exists $x_0\in\R$ such that $v(x_0)=0$. Since $v\geq 0$ in $\R$ and $v\in\boC^1(\R)$, it follows that $v'(x_0)=0$. Thus, applying Cauchy--Lipschitz theorem to the second equation in \eqref{eq:mulambda} implies that $v\equiv 0$, a contradiction. Therefore, $v>0$ in $\R$. 

Analogously, assume by contradiction that there exists $x_0>0$ such that $\rho(x_0)=1$. Since $\rho\leq 1$ in $\R$ and $\rho\in\boC^2(\R)$, it follows that $\rho'(x_0)=0$ and $\rho''(x_0)\leq 0$. The first equation in \eqref{eq:mulambda} implies that $\alpha v(x_0)^2\leq 0$, so that $v(x_0)=0$. This contradicts the fact that $v>0$ in $\R$.

Let us prove now \eqref{eq:mulambdaestimates}. First, again as in \cite{dLMar2022}, we deduce that
    \[q=p(u)=\frac{c}{4}\int_\R \frac{(1-\rho^2)^2}{\rho^2}\d x.\]
    This implies that $c>0$. Next, following \cite{Maris2006}, we multiply the first equation in \eqref{eq:mulambda} by $\rho'$, the second by $v'$, add them together and integrate, obtaining 
    \begin{equation}\label{proof:hamiltonian}
    (\rho')^2+(v')^2=\Big(1-\frac{c^2}{2\rho^2}\Big)\frac{(1-\rho^2)^2}{2} + \Big(\frac{\beta}{2}v^2+\alpha\rho^2-\lambda\Big)v^2.
    \end{equation}
    We claim that
    \begin{equation}\label{proof:rhomin}
    \rho(x)^2\geq \min\Big\{\frac{\lambda}{\alpha}-\frac{\beta}{2\alpha}v(x)^2,\frac{c^2}{2}\Big\},\quad\text{for all }x\in\R.
    \end{equation}
    Indeed, assume that there exists $x_0\in\R$ that violates \eqref{proof:rhomin}. Recall that $v(x_0)>0$. Then, evaluating \eqref{proof:hamiltonian} at $x_0$ leads to 
    \[0\leq (\rho')^2+(v')^2=\Big(1-\frac{c^2}{2\rho^2}\Big)\frac{(1-\rho^2)^2}{2} + \Big(\frac{\beta}{2}v^{2}+\alpha\rho^2-\lambda\Big)v^2< 0,\]
    a contradiction. We thus conclude that \eqref{proof:rhomin} holds.

    We next claim that there exists $y\in\R$ such that
    \begin{equation}\label{proof:lambdamuestimate}
        \frac{\lambda}{\alpha} - \frac{\beta}{2\alpha}v(y)^{2}>\frac{c^2}{2}.
    \end{equation}
    Indeed, assuming otherwise, we deduce from \eqref{proof:rhomin} that $\alpha\rho(x)^2\geq \lambda-\beta v(x)^{2}/2$ for every $x\in\R$. Hence, taking $v$ as test function in the second equation of \eqref{eq:mulambda}, we deduce
    \[0=\int_\R\Big((v')^2 + (\beta v^2+\alpha\rho^2-\lambda)v^2\Big)\d x\geq \int_\R\Big ((v')^2+\frac{\beta}{2}v^4\Big)\d x\geq 0.\]
    Therefore, $v\equiv 0$, a contradiction. In sum, \eqref{proof:lambdamuestimate} holds and, in turn, we get $2\lambda>\alpha c^2$. In fact, since $v$ is even and radially decreasing, it follows from \eqref{proof:lambdamuestimate} that 
    \[\frac{\lambda}{\alpha}>\frac{c^2}{2}+\frac{\beta}{2\alpha}v(x)^{2},\quad\text{for every }x\geq|y|.\]
    Therefore, we get from \eqref{proof:rhomin} that 
    \[\rho(x)^2\geq\frac{c^2}{2},\quad\text{for every }x\geq|y|.\]
    Since $\rho<1$, we conclude that $c^2<2$.

    Finally, Lemma~\ref{lemma:smallE} leads to
    \[\lambda m = \int_\R\Big((v')^2+\beta v^4 + \alpha \rho^2 v^2\Big)\d x\leq 4 E(u,v) + 2\alpha m<4\sqrt{2}q+ 2\alpha m.\]
    This concludes the proof.
\end{proof}

\section*{Appendix}

In this last section we prove that the bright components are always real--valued modulo a constant change of phase.

\begin{proposition}\label{prop:realvalued}
    Let $f:\R\to\R$ be a continuous function, and assume that there exists $\lambda\in\R$ such that $f(x)\leq\lambda$ for all $x\in\R$. Let $v\in H^1(\R;\C)$ be a solution to
    \begin{equation}\label{eq:linear}
    -v''=f(x)v,\, x\in\R.
    \end{equation}
    Then, there exists $\theta_0\in\R$ such that $v(x)e^{i\theta_0}\in\R$ for all $x\in\R$.
\end{proposition}

\begin{remark}
    Given any solution $(u,v)\in \boE(\R)\times H^1(\R;\C)$ to the Gross--Pitaevskii system, one may apply Proposition~\ref{prop:realvalued} with $f(x)=\lambda-\alpha|u(x)|^2-\beta|v(x)|^2$ and derive that $v$ is real--valued up to a multiplication by a constant of modulus one. 
\end{remark}

\begin{proof}[Proof of Proposition~\ref{prop:realvalued}]
    Let us assume that $v$ is not identically zero, since the result is trivial otherwise. Let $(a,b)$ be a possibly unbounded interval such that 
    \[|v(x)|>0,\,\text{for all }x\in (a,b),\quad \lim_{x\to a^+}|v(x)|=\lim_{x\to b^-}|v(x)|=0.\]
    In such an interval, we may take polar coordinates $v(x)=\rho(x)e^{i\theta(x)}$, where $\rho$ and $\theta$ are $\boC^2$ in $(a,b)$. Notice that $\theta$ may not have limits at $a$ or $b$. Writing the equation \eqref{eq:linear} in terms of $\rho$ and $\theta$, and equating real and imaginary parts, we derive
    \begin{align}
        -\rho''+(\theta')^2\rho=f(x)\rho, \quad &x\in (a,b),\label{eq:real}
        \\
        2\rho'\theta'+\rho\theta''=0,\quad &x\in (a,b).\label{eq:imaginariy}
    \end{align}
    Equation \eqref{eq:imaginariy} is equivalent to $(\rho^2\theta')'=0$ in $(a,b)$, which implies that there exists $C\in\R$ such that 
    \begin{equation}\label{eq:constant}
    \rho^2\theta'=C,\quad x\in (a,b).
    \end{equation}
    We claim that $C=0$. To prove the claim, let us substitute \eqref{eq:constant} into \eqref{eq:real} to obtain
    \begin{equation}\label{eq:rhoC}
    -\rho''+\frac{C^2}{\rho^3}=f(x)\rho, \quad x\in (a,b).
    \end{equation}
    In what follows we distinguish between the cases of $(a,b)=\R$ or $(a,b)\not=\R$. 
    
    Let us first assume that $(a,b)=\R$.  Using that $f\leq\lambda$ in $\R$, and rearranging terms in \eqref{eq:rhoC}, we obtain
    \begin{equation}\label{eq:rho''}
        \rho''\geq \frac{C^2}{\rho^3}-\lambda\rho,\quad x\in \R.
    \end{equation}
    Seeking a contradiction, assume that $C^2>0$. Let $\varepsilon>0$ be such that
    \[\frac{C^2}{\varepsilon^3}-\lambda\varepsilon:=R_\varepsilon>0.\]
    Let $x_\varepsilon\in \R$ be such that $\rho(x)<\varepsilon$ for all $x\in [x_\varepsilon,\infty)$. Then, from \eqref{eq:rhoC} we deduce
    \begin{equation}\label{eq:rho''R}
    \rho''(x)\geq R_\varepsilon,\quad x\in [x_\varepsilon,\infty).
    \end{equation}
    Then, integrating twice, we conclude
    \[\rho(x)\geq \frac{
    R_\varepsilon}{2}(x-x_\varepsilon)^2+\rho'(x_\varepsilon)(x-x_\varepsilon)+\rho(x_\varepsilon),\quad x\in [x_\varepsilon,\infty).\]
    We arrive at a contradiction by letting $x$ tend to infinity. Hence, $C=0$ and, from \eqref{eq:constant}, it follows that there exists $\theta_0\in\R$ such that $\theta=-\theta_0$ in $\R$, i.e. $v(x)e^{i\theta_0}=|v(x)|\in\R$ for all $x\in\R$. Thus, the result is proved in the case $(a,b)=\R$.

    We consider now the case $(a,b)\not=\R$. Assume first that $a\in\R$. Then, $v$ satisfies 
    \begin{equation}\label{eq:cauchy}
        \begin{cases}
            -v''=f(x)v, &x\in (a,b),
            \\
            v(a)=0.
        \end{cases}
    \end{equation}
    If we denote $v=v_1+iv_2$, it is clear that $v_j$ satisfies \eqref{eq:cauchy} too for $j=1,2$. Since the second--order equation in \eqref{eq:cauchy} is linear, it is known that the vector space of real--valued solutions to the equation has dimension 2, and since $v_1(a)=v_2(a)=0$, there exists $\mu\in\R$ such that $v_1=\mu v_2$ in $(a,b)$. This implies that $|v|=\sqrt{1+\mu^2}|v_2|$ in $(a,b)$. Notice that $v_2$ cannot change sign in $(a,b)$ since, otherwise, $|v|$ would vanish at some point in $(a,b)$. Therefore, we may assume that $v_2>0$ in $(a,b)$, an thus, 
    \[\rho=|v|=\sqrt{1+\mu^2}v_2,\quad x\in (a,b).\]
    Multiplying equation \eqref{eq:rhoC} by $(1+\mu^2)^{-1/2}$, it follows
    \[-v_2''+\frac{C^2}{(1+\mu^2)^2 v_2^3}=f(x)v_2,\quad x\in (a,b).\]
    Bearing in mind that $v_2$ satisfies also $-v_2''=f(x)v_2$ in $(a,b)$, we conclude again that $C=0$. Therefore, there exists $\theta_0\in\R$ such that $v(x)e^{i\theta_0}\in\R$ for all $x\in(a,b)$. 
    
    It is clear that the function $w:=e^{i\theta_0}v$ is a solution to \eqref{eq:linear}, it is real--valued and does not vanish in $(a,b)$, and $w(a)=0$. If $w'(a)=0$, the uniqueness of solution to the Cauchy problem associated to $-w''=f(x)w$ would imply that $w\equiv 0$, a contradiction. As a consequence, $w'(a)\not=0$. This implies that $w$ is real--valued also in $(a-\varepsilon,a)$ for some $\varepsilon>0$. Then, arguing as above, it can be proved that $w$ is real--valued in the possibly unbounded interval $(a_1,a)$ such that $|w|>0$ in $(a_1,a)$ and $\lim_{x\to a_1^+}|w(x)|=0$. An argument by induction proves that $w$ is real--valued in $(-\infty,b)$. If $b=\infty$, the proof is finished, while if $b\in\R$, the proof follows the same lines as we showed for $a\in\R$.
\end{proof}


\begin{thebibliography}{99}

\bibitem{Albert_Bhattarai}
\newblock J. Albert and S. Bhattarai,
\newblock {Existence and stability of a two--parameter family of solitary waves for an NLS--KdV system},
\newblock \emph{Adv. Differential Equations}, \textbf{18}(11-12) (2013), 1129-1164.

\bibitem{alhelou}
\newblock J. Alhelou,
\newblock \emph{Mathematical and numerical analysis for a Gross--Clark--Schrödinger system.},
\newblock Ph.D thesis, Universit\'e Paul Sabatier -- Toulouse III, 2021.

\bibitem{berloff2008}
\newblock N. Berloff,
\newblock Quantum vortices, travelling coherent structures and superfluid turbulence,
\newblock \emph{Stationary and time dependent Gross--Pitaevskii equations}, Contemp. Math., 473, Amer. Math. Soc., Providence, RI, 2008, 27-54.

\bibitem{bethuel0}
\newblock F. B\'ethuel, P. Gravejat, and J.-C. Saut,
\newblock Existence and properties of travelling waves for the Gross--Pitaevskii equation,
\newblock \emph{Stationary and time dependent Gross--Pitaevskii equations}, Contemp. Math., 473, Amer. Math. Soc., Providence, RI, 2008, 55-103.

\bibitem{orbitalblack}
\newblock F. B\'ethuel, P. Gravejat, J.-C. Saut, and D. Smets,
\newblock {Orbital stability of the black soliton for the Gross--Pitaevskii equation},
\newblock \emph{Indiana Univ. Math. J.}, \textbf{57}(6) (2008), 2611-2642.

\bibitem{asymptoticgray}
\newblock F. B\'ethuel, P. Gravejat, and D. Smets,
\newblock {Asymptotic stability in the energy space for dark solitons of the Gross--Pitaevskii equation},
\newblock \emph{Ann. Sci. \'Ec. Norm. Sup\'er. (4)}, \textbf{48}(6) (2015), 1327-1381.

\bibitem{Burchard2009}
\newblock A. Burchard,
\newblock A short course on rearrangement inequalities,
\newblock lecture notes, 2009. Available from: {https://www.math.utoronto.ca/almut/rearrange.pdf}.

\bibitem{Busch_Anglin}
\newblock T. Busch and J. R. Anglin,
\newblock {Dark--bright solitons in inhomogeneous Bose--Einstein condensates},
\newblock \emph{Phys. Rev. Lett.}, \textbf{87} (2001), p. 010401.

\bibitem{cazenave}
\newblock T. Cazenave,
\newblock \emph{Semilinear Schr\"odinger equations}, Courant Lecture Notes in Mathematics, 10, New York University Courant Institute of Mathematical Sciences, New York, 2003.

\bibitem{cazlions}
\newblock  T. Cazenave and P.-L. Lions,
\newblock {Orbital stability of standing waves for some nonlinear Schr\"odinger equations},
\newblock \emph{Comm. Math. Phys.}, \textbf{85}(4) (1982), 549-561.

\bibitem{deLaireGravejatSmets2024}
\newblock  A. de Laire, P. Gravejat, and D. Smets,
\newblock {Construction of minimizing traveling waves for the Gross--Pitaevskii equation on $\mathbb{R}\times\mathbb{T}$},
\newblock \emph{Tunis. J. Math.}, \textbf{6}(1) (2024), 157-188.

\bibitem{deLaireGravejatSmets2025}
\newblock  A. de Laire, P. Gravejat, and D. Smets,
\newblock {Minimizing travelling waves for the Gross--Pitaevskii equation on $\mathbb{R}\times\mathbb{T}$},
\newblock \emph{Ann. Fac. Sci. Toulouse Math. (6)}, \textbf{34}(1) (2025), 135-192.

\bibitem{dLMar2022}
\newblock  A. de Laire and S. L\'opez--Mart\'inez,
\newblock {Existence and decay of traveling waves for the nonlocal Gross--Pitaevskii equation},
\newblock \emph{Comm. Partial Differential Equations}, \textbf{47}(9) (2022), 1732-1794.

\bibitem{delaire-mennuni}
\newblock  A. de Laire and P. Mennuni,
\newblock {Traveling waves for some nonlocal 1D Gross--Pitaevskii equations with nonzero conditions at infinity},
\newblock \emph{Discrete Contin. Dyn. Syst.}, \textbf{40}(1) (2020), 635-682.

\bibitem{DiMenzaGallo}
\newblock  L. Di Menza and C. Gallo,
\newblock {The black solitons of one-dimensional NLS equations},
\newblock \emph{Nonlinearity}, \textbf{20}(2) (2007), 461-496.

\bibitem{Garrisi}
\newblock D. Garrisi,
\newblock {On the orbital stability of standing--wave solutions to a coupled non--linear Klein-Gordon equation},
\newblock \emph{Adv. Nonlinear Stud.}, \textbf{12}(3) (2012), 639-658.

\bibitem{gerardenergyspace}
\newblock P. G\'erard,
\newblock{The Gross--Pitaevskii equation in the energy space},
\newblock \emph{Stationary and time dependent Gross--Pitaevskii equations}, Contemp. Math., 473, Amer. Math. Soc., Providence, RI, 2008, 129-148.

\bibitem{GouJeanjean2016}
\newblock T. Gou and L. Jeanjean,
\newblock {Existence and orbital stability of standing waves for nonlinear Schr\"odinger systems},
\newblock \emph{Nonlinear Anal.}, \textbf{144} (2016), 10-22.

\bibitem{GouJeanjean2018}
\newblock T. Gou and L. Jeanjean,
\newblock {Multiple positive normalized solutions for nonlinear Schr\"odinger systems},
\newblock \emph{Nonlinearity}, \textbf{31}(5) (2018), 2319-2145.

\bibitem{Grant_Roberts}
\newblock J. Grant and P. H. Roberts,
\newblock {Motions in a Bose condensate III. The structure and effective masses of charged and uncharged impurities},
\newblock \emph{J. Phys. A: Math. Nucl. Gen.}, \textbf{7}(2) (1974), p. 260.

\bibitem{asymptstabblack}
\newblock P. Gravejat and D. Smets,
\newblock {Asymptotic stability of the black soliton for the Gross--Pitaevskii equation},
\newblock \emph{Proc. Lond. Math. Soc. (3)}, \textbf{111}(2) (2015), 305-353.

\bibitem{kevrekidis_frantzeskakis_carretero-gonzalez_2008}
\newblock P. G. Kevrekidis, D. J. Frantzeskakis, and R. Carretero--Gonz\'alez,
\newblock \emph{Emergent nonlinear phenomena in Bose--Einstein condensates. Theory and Experiment}, Springer Series on Atomic, Optical, and Plasma Physics, 45, Springer--Verlag Berlin Heidelberg, 2008.

\bibitem{KevNisFraMalCar}
\newblock P. G. Kevrekidis, H. E. Nistazakis, D. J. Frantzeskakis, B. A. Malomed, and R. Carretero-Gonz\'alez,
\newblock {Families of matter--waves in two--component Bose--Einstein condensates},
\newblock \emph{Eur. Phys. J. D}, \textbf{28} (2004), 181-185.

\bibitem{KivsharAgrawal}
\newblock Y. S. Kivshar and G. P. Agrawal,
\newblock \emph{Optical Solitons. From Fibers to Photonic Crystals}, Academic press/Elsevier, 2003.

\bibitem{kivshar1998dark}
\newblock Y. S. Kivshar and B. Luther-Davies,
\newblock {Dark optical solitons: physics and applications},
\newblock \emph{Physics Reports}, \textbf{298} (1998), 81-197.

\bibitem{LiebLoss2001}
\newblock E. H. Lieb and M. Loss,
\newblock \emph{Analysis}, Second edition. Grad. Stud. Math., 14. American Mathematical Society, Providence, RI., 2001.

\bibitem{linbubbles}
\newblock Z. Lin,
\newblock {Stability and instability of traveling solitonic bubbles},
\newblock \emph{Adv. Differential Equations}, \textbf{7}(8) (2002), 897-918.

\bibitem{CCPLionsI}
\newblock P.-L. Lions,
\newblock {The concentration--compactness principle in the calculus of variations. The locally compact case. I},
\newblock \emph{Ann. Inst. H. Poincar\'e Anal. Non Lin\'eaire}, \textbf{1}(2) (1984), 109-145.

\bibitem{CCPLionsII}
\newblock P.-L. Lions,
\newblock {The concentration--compactness principle in the calculus of variations. The locally compact case. II},
\newblock \emph{Ann. Inst. H. Poincar\'e Anal. Non Lin\'eaire}, \textbf{1}(4) (1984), 223-283.

\bibitem{Manakov}
\newblock S. V. Manakov,
\newblock {On the theory of two--dimensional stationary self--focusing of electromagnetic waves},
\newblock \emph{Zhurnal Eksperimentalnoi i Teoreticheskoi Fiziki}, \textbf{65} (1973), 505-516.

\bibitem{Mao_Zhao}
\newblock N. Mao and L.-C. Zhao,
\newblock {Exact analytical soliton solutions of $n$--component coupled nonlinear Schr\"odinger equations with arbitrary nonlinear parameters},
\newblock \emph{Phys. Rev. E}, \textbf{106} (2022), p. 064206.

\bibitem{Maris2006}
\newblock M. Mari\c{s},
\newblock {Global branches of travelling--waves to a Gross--Pitaevskii--Schr\"odinger system in one dimension},
\newblock \emph{SIAM J. Math. Anal.}, \textbf{37} (2006), 1535-1559.

\bibitem{MarisMur}
\newblock M. Mari\c{s} and A. Mur,
\newblock {Periodic traveling waves for nonlinear Schr\"odinger equations with non--zero conditions at infinity in $\mathbb{R}^2$},
\newblock \emph{Calc. Var. Partial Differential Equations}, \textbf{64}(3) (2025),  61 pp.

\bibitem{Nguyen_Wang}
\newblock N. V. Nguyen and Z.-Q. Wang,
\newblock {Existence and stability of a two--parameter family of solitary waves for a 2--coupled nonlinear Schr\"odinger system},
\newblock \emph{Discrete Contin. Dyn. Syst.}, \textbf{36}(2) (2016),  1005-1021.

\bibitem{Sheppard_Kivshar}
\newblock A. P. Sheppard and Y. S. Kivshar,
\newblock {Polarized dark solitons in isotropic kerr media},
\newblock \emph{Phys. Rev. E}, \textbf{55} (1997), 4773-4782.

\bibitem{Shibata}
\newblock M. Shibata,
\newblock {A new rearrangement inequality and its application for $L^2$--constraint minimizing problems},
\newblock \emph{Math. Z.}, \textbf{287}(1-2) (2017), 341-359.

\bibitem{tintarev-fieseler}
\newblock K. Tintarev and K.-H. Fieseler,
\newblock \emph{Concentration compactness}, Functional--analytic grounds and applications. Second edition. Imperial College Press,  London, 2007.





\end{thebibliography}
\end{document}